\newcommand{\shortdot}[1]{\raisebox{-0.4pt}{$\stackrel{\bullet}{#1}$}}
\theoremstyle{plain}
\newtheorem{theorem}{Theorem}[section]
\theoremstyle{definition}
\theoremstyle{remark}
\begin{document}

\title{Mean Field Queues with Delayed Information}
\author{   
Philip Doldo \\ Center for Applied Mathematics \\ Cornell University
\\ 657 Rhodes Hall, Ithaca, NY 14853 \\  pmd93@cornell.edu  \\ 
\and
  Jamol Pender \\ School of Operations Research and Information Engineering \\ Center for Applied Mathematics \\ Cornell University
\\ 228 Rhodes Hall, Ithaca, NY 14853 \\  jjp274@cornell.edu  \\ 
 }


\maketitle
\begin{abstract}
In this paper, we consider a new queueing model where queues balance themselves according to a mean field interaction with a time delay. Unlike other work with delayed information our model considers multi-server queues with customer abandonment. In this setting, our queueing model corresponds to a system of mean field interacting delay differential equations with a point of non-differentiability introduced by the finite-server and abandonment terms. We show that this system of delay differential equations exhibits a change in stability when the delay in information crosses a critical threshold.  In particular, the system exhibits periodic oscillations when the delay in information exceeds this critical threshold and we show that the threshold surprisingly does not depend on the number of queues.  This is in stark contrast to other choice based queueing models with delayed information.  We compute this critical threshold in each of the relevant parameter regions induced by the point of non-differentiability and show numerically how the critical threshold transitions through the point of non-differentiability.
\end{abstract}


\section{Introduction} \label{sec_intro}

In many queueing systems, customers are provided with information about waiting times or queue lengths which can impact their decision to wait for service. However, this information is usually not provided to customers in real time and consequently most of this information is inherently delayed. Several factors could lead to information being delayed, such as the fact that it often takes time to process and send information to customers so that customers are receiving information from some amount of time in the past by the time it actually arrives to them. Much of the delayed information literature considers models where the customers are given the length of the queue from some amount of time in the past. This information is often modeled with a constant delay, as in \citet{mitzenmacher2000useful, raina2005buffer, kamath2015car, raina2015stability, lipshutz2015existence, lipshutz2017exit, Lipshutz2018, nirenberg2018impact, novitzky2020limiting, doldo2021breaking, novitzky2019nonlinear, pender2017queues, pender2018analysis, whitt2021many}. Instead of a constant delay, a queueing model with updating information is considered in \citet{doldo2021queues, novitzky2020update}. Queueing models with random delays are considered in \citet{doldo2020multi, novitzky2020queues}, although these are typically examined through the lens of distributed delay equations which differ from standard models with the constant delay being replaced with a random variable, as discussed in \citet{doldo2021note}. Additionally, other types of information such as delayed velocity information or moving average information are considered in \citet{dong2018impact, novitzky2020limiting, pender2017queues, novitzky2019nonlinear}.

In this paper, we present a new queueing model that uses the delayed queue length information in a new way.  In particular, we consider a model where customers are informed by mean-field information. To illustrate what we mean by this, if customers are informed by standard queue length information from $\Delta> 0$ time units in the past and $Q_i(t)$ is the length of the $i^{\text{th}}$ queue in the system of $N$ queues at time $t$, then they are given the information $$Q_i(t - \Delta)$$ for $i = 1, ..., N$. On the other hand, if customers are provided with mean field queue length information, then they are provided with $$(Q_i(t- \Delta))^p - \frac{1}{N} \sum_{j=1}^{N} (Q_i(t - \Delta))^p$$ for $i=1, ..., N$ where $p \geq 0$ is a parameter. Note that when $p=1$, this reduces to the difference of the delayed queue length and the mean of all of the delayed queue lengths. When $p=0$, the information associated with each queue is identical and thus it is equivalent to giving the customer no information at all which will result in the customer choosing any of the queues with equal probability. To the authors' knowledge, there exists only two papers that explore the possibility of using a "mean field" type interaction to perform load balancing of queueing systems \cite{dawson2005balancing, bobbio2008analysis}.  However, this work does not consider the fact that the information about the queue length might be delayed in realistic situations.  Thus, our goal is to explore how delayed queue length information impacts the dynamics of the queueing system with a mean field load balancing interaction. 

Although mean field queues have only been considered in \cite{dawson2005balancing, bobbio2008analysis} for load balancing considerations, mean field dynamics of stochastic systems and deterministic systems have been studied in a variety of application settings.  For example in the financial mathematics literature there is a large effort to study mean field games and mean field stochastic differential equations \cite{buckdahn2009mean, buckdahn2017mean, carmona2015probabilistic, carmona2016mean, carmona2017mean, heesen2021fluctuation, lacker2019mean, nutz2018mean, lauriere2020convergence, possamai2021non, luo2021laplace}.  However, this mean field interaction approach has not been applied heavily in the context of queueing theory.  Thus, one of our goals in this work is to make the appeal to the queueing and applied mathematics communities that the mean field interaction approach has some value for load balancing applications in queueing systems.

 \subsection{Main Contributions of Paper}

The contributions of this work can be summarized as follows:    
\begin{itemize}
\item We develop a new stochastic queueing model that incorporates mean field behavior with delayed information and uses a finite number of servers.   
\item We solve for the equilibrium solution and compute the critical delay at which the system exhibits a change in stability.  We find that the critical threshold for stability does not depend on the number of queues, which highlights a stark difference between queueing models with mean field interactions and one with choice models.   
\item Finally, we provide a numerical examination of how the critical threshold varies as a point of non-differentiability in our model is approached and crossed.  This provides insight on how the thresholds in the two different parameter regions are connected.  
\end{itemize} 


\subsection{Organization of Paper}

The remainder of this paper is organized as follows. In Section \ref{model_section}, we introduce the stochastic queueing model and the corresponding fluid limit which gives rise to the system of delay differential equations that we analyze the dynamics of. In Section \ref{sec_Hopf} we perform the stability analysis of the queueing system by computing the equilibrium and critical delay of the system. We note that we do this for two parameter cases which are relevant due to the finite-server and abandonment terms in our model.  In Section \ref{conclusion_section} we give concluding remarks and discuss possible extensions.

\section{Mean Field Queues } \label{model_section}

In this section, we present a new stochastic queueing model where we have $N$ multi-server queues operating in parallel and customers choose which station to join via a mean field interaction model that depends on the queue lengths.  Additionally, customers in the queue that are not immediately served have the possibility of abandoning in which case they exit the queue without receiving service.   We assume that the total possible arrival rate to each queue is $\lambda > 0$, the service rate at each queue is given by $\mu > 0$, and the abandonment rate is given by $\beta > 0$.   

To model the customer arrival process, we incorporates a use a mean field interaction approach.  More specifically, we use a delayed mean field queue length information.  Thus, the probability that a customer joins the $i^{th}$ queue is given by the following expression

 \begin{eqnarray}
p_i \left( Q_i(t), \overline{Q}^{(N)}(t), \Delta \right) &=& p_i \left( Q_i^p(t-\Delta) - \overline{Q}^{(N,p)}(t-\Delta) \right) \\
 &=& p_i \left( Q_i^p(t-\Delta) - \frac{1}{N} \sum^{N}_{j=1} Q_j^p(t-\Delta) \right) 
\end{eqnarray}
where $\overline{Q}^{(N,p)}(t) =  \frac{1}{N} \sum^{N}_{j=1} Q^p_j(t) $.   It is important to note two things.  The first is that $\overline{Q}^{(N,p)}(t) =  \frac{1}{N} \sum^{N}_{j=1} Q^p_j(t) $ is the empirical $p^{th}$ moment of the queue length processes.  Second, the mean field interaction is depends on the parameter $p$.  When $p=1$, we have a traditional mean field interaction term.  When $p$ is increased, the interaction of the queues is increased and when $p$ is decreased the interaction is decreased.  In fact when $p=0$, there is no interaction between the queues at all and all queues are independent Erlang-A queues.  It is common that one might use the logistic function as a function to weight the rate that a customer will choose the $i^{th}$ queue.  This yields the following expression for the rate at which customers will join the $i^{th}$ queue
 \begin{eqnarray}
p_i \left( Q_i(t), \Delta, p \right) &=& \frac{1}{\gamma + e^{\theta \left( Q^p_i(t-\Delta) - \frac{1}{N} \sum^{N}_{j=1} Q^p_j(t-\Delta) \right)}}.
\end{eqnarray} However, in what follows, we will use some differentiable function $f: \mathbb{R} \to \mathbb{R}$, so that the logistic function is just a special case of what we consider where $$f(x) = \frac{1}{ \gamma + \exp(\theta x)}.$$

Another common function that could be used is the probit function, which is the complementary cumulative distribution function of the standard Gaussian distribution i.e.
$$f(x) = 1 - \Phi(x) = 1 - \int^{x}_{-\infty} \frac{1}{\sqrt{2\pi}} e^{-y^2/2}dy = \int^{\infty}_{x} \frac{1}{\sqrt{2\pi}} e^{-y^2/2}dy =.$$

A stochastic model for our $N$-dimensional queueing network for $t \geq 0$ is given by 

\begin{eqnarray}
Q_i(t) &=& Q _i([-\Delta,0])  +  \Pi^a_{i} \left( \int^{t}_{0} \lambda f \left( Q^p_i(s-\Delta) - \frac{1}{N} \sum^{N}_{j=1} Q^p_j(s-\Delta) \right)  ds \right) \nonumber \\
&-&  \Pi^d_{i} \left( \int^{t}_{0} \mu ( Q_i(s) \wedge c) ds \right) - \Pi_i^r \left( \int_0^t \beta (Q_i(s) - c)^+  \right) \label{cdsqnoeta}
\end{eqnarray}
 where each $\Pi(\cdot)$ is a unit rate Poisson process and  $Q_i(s) = \varphi_i(s)$ for all $s \in [- \Delta,0]$.  In this model, for the $i^{th}$ queue, we have that 
  \begin{equation}
 \Pi^a_{i} \left( \int^{t}_{0}  \lambda f \left( Q^p_i(s-\Delta) - \frac{1}{N} \sum^{N}_{j=1} Q^p_j(s-\Delta) \right) ds \right)
 \end{equation}
  counts the number of customers that decide to join the $i^{th}$ queue in the time interval $(0,t]$.  Note that the rate depends on the queue length at time $t - \Delta$ and not time $t$, hence representing a constant lag in information of size $\Delta$.  Similarly 
  \begin{equation}
  \Pi^d_{i} \left( \int^{t}_{0} \mu (Q_i(s) \wedge c) ds \right)
  \end{equation}
   counts the number of customers that depart the $i^{th}$ queue having received service from any of the $c$ servers in the time interval $(0,t]$. However, in contrast to the arrival process, the service process depends on the current real-time queue length and does not depend on the past queue length times in any way.  Additionally, 
     \begin{equation}
  \Pi^r_{i} \left( \int^{t}_{0} \beta (Q_i(s) - c)^+ ds \right)
  \end{equation}
  counts the number of customers that abandon their position in the $i^{th}$ queue without receiving service due to waiting too long to receive service in the time interval $(0, t]$. We note that the possibility of customer abandonment only occurs when the number of customers in the queue exceeds the number of servers $c$. This abandonment process also depends on the current real-time queue length and thus the arrival process is the only part of the model that depends on delayed queue lengths.  Except for the arrival process, our queueing model has a stark resemblance to the Erlang-A queueing model, which has been extensively analyzed in \cite{mandelbaum1998strong, mandelbaum2004, zeltyn2005, engblom2014approximations, daw2019new, shah2019using, massey2018dynamic}.  The Erlang-A queueing model is a canonical model for service systems, however, it is well known that the points of non-differentiability are challenging to overcome, see for example \cite{ko2013critically, massey2013gaussian, pender2017approximating, pender2017sampling}.

   \subsection{Fluid Limits}
In many service systems, the arrival rate of customers is high.  For example in cloud computing systems, many customers are accessing their data or information at any given time \cite{pender2016law}.  In amusement parks such as Disneyland there are thousands of customers moving around the park and deciding on which ride they should join \cite{nirenberg2018impact}.  Motivated by the large number of customers, we introduce the following scaled queue length process by a parameter $\eta$ 
  \begin{eqnarray}
Q^{\eta}_i(t) &=&  Q^{\eta} _i([- \Delta,0]) +  \frac{1}{\eta}\Pi^a_{i} \left( \eta \int^{t}_{0} \lambda f \left( Q^{p,\eta}_i(s-\Delta) - \frac{1}{N} \sum^{N}_{j=1} Q^{p,\eta}_j(s-\Delta) \right)  ds \right) \nonumber \\ &-&  \frac{1}{\eta}\Pi^d_{i} \left(\eta \int^{t}_{0} \mu (Q^{\eta}_i(s)  \wedge c) ds \right) -  \frac{1}{\eta}\Pi^r_{i} \left(\eta \int^{t}_{0} \beta (Q^{\eta}_i(s)  - c)^+ ds \right). \label{cdsqeta}
\end{eqnarray}

Note that we scale the rates of both Poisson processes.  This is different from the many-server of Halfin-Whitt scaling, which would only scale the arrival rate and number of servers in the case of a multi-server queue.  Letting the scaling parameter $\eta$ go to infinity gives us our first result.

\begin{theorem}\label{fluidlimit}
Let $\varphi_i(s)$ be a Lipschitz continuous function defined on the interval $[-\Delta,0]$.   If $Q^{\eta} _i(s) \to \varphi_i(s)$ almost surely for all $s \in [- \Delta,0]$ and for all  $1 \leq i \leq N$, then the sequence of stochastic processes $\{ Q^{\eta}(t) = (Q^{\eta}_1(t),Q^{\eta}_2(t), ..., Q^{\eta}_N(t)  \}_{\eta \in \mathbb{N}}$ converges almost surely and uniformly on compact sets of time to the functional differential equation $(q(t) = (q_1(t),q_2(t), ... , q_N(t))$ where
\begin{eqnarray}
\shortdot{q}_i(t) &=& \lambda f \left( q^p_i(t-\Delta) - \frac{1}{N} \sum^{N}_{j=1} q^p_j(t-\Delta) \right) - \mu \cdot ( q_i(t) \wedge c) - \beta \cdot (q_i(t) - c)^+  
\end{eqnarray}
and $q_i(s) = \varphi_i(s)$ for all $s \in [-\Delta,0]$ and for all  $1 \leq i \leq N$.
\end{theorem}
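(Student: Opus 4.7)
The plan is to establish convergence by a standard Kurtz-type functional law of large numbers argument adapted to the delayed setting, bootstrapping over successive intervals of length $\Delta$. The first step is to obtain an a priori, $\eta$-uniform upper bound for $\sup_{s \le T} Q_i^{\eta}(s)$ on any compact $[0,T]$. Since $f$ is assumed bounded (being a choice probability weight) and the departure and abandonment terms are non-negative, we have the pathwise domination $Q_i^{\eta}(t) \le \|\varphi_i\|_\infty + \eta^{-1}\Pi_i^a(\eta \lambda \|f\|_\infty t)$. The classical FLLN for the Poisson process then gives almost sure boundedness of $Q_i^{\eta}$ on $[0,T]$, uniformly in $\eta$, so all trajectories live in a fixed compact set $K \subset [0,\infty)$ on which $x \mapsto x^p$, $f$, $x \mapsto x \wedge c$, and $x \mapsto (x-c)^+$ are all (globally) Lipschitz, with a constant $L$ depending only on $p$, $\lambda$, $\mu$, $\beta$, $c$, $\|\varphi\|_\infty$ and $T$.

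Next, I would use the martingale decomposition $\Pi(\eta u)/\eta = u + M^\eta(u)$, where $\sup_{u \le U}|M^\eta(u)| \to 0$ almost surely as $\eta \to \infty$ (for instance by Doob's maximal inequality, since the predictable quadratic variation is $u/\eta$). Subtracting the fluid equation for $q_i$ from the scaled equation for $Q_i^\eta$ yields, on a fixed interval $[0,T]$,
\begin{align*}
|Q_i^\eta(t) - q_i(t)| &\le |Q_i^\eta(0) - q_i(0)| + R_i^\eta(t) \\
&\quad + \int_0^t \Bigl[ \lambda L \bigl| (Q_i^{p,\eta} - \overline{Q}^{(N,p),\eta})(s-\Delta) - (q_i^p - \overline{q}^{(N,p)})(s-\Delta) \bigr| \\
&\qquad\qquad + \mu L |Q_i^\eta(s) - q_i(s)| + \beta L |Q_i^\eta(s) - q_i(s)| \Bigr] ds,
\end{align*}
where $R_i^\eta(t)$ collects the three martingale remainders and vanishes uniformly on $[0,T]$ almost surely. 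By the Lipschitz property of $x \mapsto x^p$ on $K$, the delayed terms are controlled by $\sum_j \sup_{u \le s-\Delta}|Q_j^\eta(u) - q_j(u)|$.

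I would then run an induction on the intervals $[(k-1)\Delta, k\Delta]$. On $[0,\Delta]$, the delayed arguments $s - \Delta$ lie in $[-\Delta, 0]$, so they are evaluated at the common initial history $\varphi$ (modulo the fact that $Q_i^\eta$ converges to $\varphi_i$ uniformly there by hypothesis). Hence the delay integral is a vanishing deterministic quantity, and Grönwall's inequality applied to $E^\eta(t) := \max_i \sup_{s \le t}|Q_i^\eta(s) - q_i(s)|$ gives $\sup_{t \le \Delta} E^\eta(t) \to 0$ almost surely. Feeding this into the next interval $[\Delta, 2\Delta]$ makes the delayed-difference term vanish, and another Grönwall application gives convergence on $[0, 2\Delta]$. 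Iterating finitely many times covers $[0,T]$.

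The main obstacle is ensuring global Lipschitz estimates in the presence of the nonsmooth terms $q \wedge c$ and $(q - c)^+$ and the power $q^p$ (which is only Hölder at the origin for $p \in (0,1)$). Both obstacles are resolved by the a priori boundedness step: on the compact invariant set $K$ in which all trajectories lie, these maps become globally Lipschitz (with a uniform constant for $p \ge 1$; for $p \in (0,1)$ one instead uses that the fluid solution is bounded below away from zero on $[0,T]$ provided the initial history is, or alternatively invokes local Hölder continuity combined with a modified Bihari--LaSalle inequality). Everything else is routine: continuity of the minimum/plus operations preserves uniform convergence, so passing to the limit in the Poisson FLLN and the Grönwall bootstrap delivers the claimed almost sure uniform convergence on compact sets.
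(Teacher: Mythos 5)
The paper does not actually spell out a proof: it states only that ``the proof is similar to the proof found in Pender and Stout (pender2020stochastic)''. So there is no internal argument to compare against, but the cited works prove fluid limits for delay-differential queueing models precisely by the Kurtz-type functional law of large numbers combined with a Gr\"onwall bootstrap across intervals of length $\Delta$, which is exactly the route you take. Your plan --- a priori boundedness to localize on a compact set where all the nonsmooth maps become Lipschitz, the Poisson FLLN/martingale decomposition for the noise, and induction on $[k\Delta,(k+1)\Delta]$ so that the delayed argument is already known to converge --- is the standard and correct architecture for this theorem, and I believe it fills in what the paper leaves implicit.

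Two small imprecisions worth fixing in a written version. First, Doob's $L^2$ maximal inequality with predictable quadratic variation $u/\eta$ gives $E[\sup_{u\le U}|M^\eta(u)|^2]\le 4U/\eta$, which is convergence in probability, not almost sure convergence; since $\sum_\eta 1/\eta$ diverges you cannot upgrade it by Borel--Cantelli directly. The clean way to get the almost sure statement is to invoke the strong law for the unit-rate Poisson process, $\sup_{u\le U}|\Pi(\eta u)/\eta - u|\to 0$ a.s., rather than Doob alone. Second, the caveat for $p\in(0,1)$ is real: $x\mapsto x^p$ is only H\"older at $0$, and asserting that the fluid solution ``stays bounded away from zero'' requires an argument (one can show $q_i$ stays positive on $[0,T]$ from $\shortdot q_i \ge -(\mu\vee\beta)q_i$ and a positive initial history, but if $\varphi_i$ vanishes somewhere this needs more care). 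Both points are fixable and do not threaten the overall structure; the rest of the argument --- the pathwise domination for the a priori bound, the localization, and the Gr\"onwall iteration over $\Delta$-blocks --- is correct and is the same mechanism the cited prior work uses.
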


\begin{proof}
The proof is similar to the proof found in \citet{pender2020stochastic}.
\end{proof}



\section{Hopf Bifurcations in the Mean Field Model} \label{sec_Hopf}

We can more compactly write our system of delay differential equations as

\begin{eqnarray}
\overset{\bullet}{q}_i(t) &=& \lambda f(x_i) - \mu \cdot (q_i(t) \wedge c) - \beta \cdot (q_i(t) - c)^+ , \hspace{5mm} i = 1, ..., N \label{mean_field_system}
\end{eqnarray}
 where $$x_i :=  (q_i(t- \Delta))^p - \frac{1}{N} \sum_{j=1}^{N} (q_i(t - \Delta))^p.$$

\noindent We aim to analyze the stability of our system of delay differential equations \ref{mean_field_system} by finding an equilibrium solution and linearizing the system about this equilibrium solution. However, the service and abandonment terms introduce a point of non-differentiability so that such a linearization will not always be valid. To get a better sense of this how these terms affect the behavior of the system, we look at how the system simplifies when $q_i(t) \leq c$ or $q_i(t) > c$ for $i = 1, ..., N$. In the former case, the system \ref{mean_field_system} reduces to 
\begin{eqnarray}
\overset{\bullet}{q}_i(t) &=& \lambda f(x_i) - \mu \cdot q_i(t) , \hspace{5mm} i = 1, ..., N
\end{eqnarray}
which is equivalent to the infinite-server case (in which abandonment never occurs). In the latter case, system \ref{mean_field_system} becomes 
\begin{eqnarray}
\overset{\bullet}{q}_i(t) &=& \lambda f(x_i)  - \beta \cdot q_i(t) - \mu c + \beta c , \hspace{5mm} i = 1, ..., N
\end{eqnarray}
which can be viewed as an infinite-server system where the abandonment rate parameter $\beta$ from our original model now takes on the role of the service rate and the system is shifted according to the constant term $-\mu c + \beta c$ that is present.

Taking these cases into consideration, Theorem \ref{equilibrium_theorem} makes clear the parameter regions of interest and their corresponding equilibrium solutions.

\begin{theorem}
The system of equations given in Equation \ref{mean_field_system} has an equilibrium point at $q_1 = q_2 = \cdots = q_N = q^*$ where $$q^* = \frac{\lambda f(0)}{\mu} \hspace{5mm} \text{if} \hspace{5mm} \lambda f(0) \leq \mu c $$ and $$q^* = \frac{\lambda f(0) - \mu c + \beta c}{\beta} \hspace{5mm} \text{if} \hspace{5mm} \lambda f(0) > \mu c.$$
\label{equilibrium_theorem}
\end{theorem}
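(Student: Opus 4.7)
The plan is to look for symmetric equilibria of the form $q_1(t) = q_2(t) = \cdots = q_N(t) \equiv q^*$, which is natural because the system is permutation-symmetric in the indices and the coupling enters only through the mean field deviation $x_i$. Plugging in this ansatz kills the interaction term: for every $i$,
\begin{equation*}
x_i = (q^*)^p - \frac{1}{N}\sum_{j=1}^N (q^*)^p = 0,
\end{equation*}
so $f(x_i) = f(0)$ and the $N$ equilibrium equations collapse to the single scalar equation
\begin{equation*}
0 = \lambda f(0) - \mu\,(q^* \wedge c) - \beta\,(q^* - c)^+.
\end{equation*}

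Next I would handle the point of non-differentiability at $q^* = c$ by splitting into the two natural parameter regions. In the under-loaded region where $q^* \leq c$, the abandonment term vanishes and $q^* \wedge c = q^*$, so the equation reduces to $\lambda f(0) = \mu q^*$, giving $q^* = \lambda f(0)/\mu$. For this to be consistent with the case assumption $q^* \leq c$, we need exactly $\lambda f(0) \leq \mu c$. In the over-loaded region where $q^* > c$, we have $q^* \wedge c = c$ and $(q^* - c)^+ = q^* - c$, so the equation becomes $\lambda f(0) = \mu c + \beta(q^* - c)$, which solves to $q^* = (\lambda f(0) - \mu c + \beta c)/\beta$; the consistency check $q^* > c$ is equivalent to $\lambda f(0) > \mu c$. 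Finally I would observe that at the boundary $\lambda f(0) = \mu c$ both formulas give $q^* = c$, so the two cases match continuously across the non-differentiable point.

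There is no real obstacle here beyond bookkeeping: the result is essentially an algebraic verification once one restricts to symmetric equilibria, and the piecewise nature of the service/abandonment terms is handled by the two-case split. The only subtle point worth flagging in the writeup is that the theorem as stated only asserts \emph{existence} of this symmetric equilibrium; ruling out non-symmetric equilibria (where $x_i \neq 0$) is not required for the stability analysis that follows, since the linearization in Section~\ref{sec_Hopf} is carried out about this particular $q^*$.
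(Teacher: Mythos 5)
Your proposal is correct and follows essentially the same route as the paper: impose the symmetric ansatz $q_1=\cdots=q_N=q^*$, observe that the mean field deviation $x_i$ vanishes, and resolve the piecewise terms by splitting on $q^*\lessgtr c$ (equivalently $\lambda f(0)\lessgtr \mu c$). The only cosmetic difference is that you \emph{solve} the scalar equation for $q^*$ and check case consistency, whereas the paper \emph{verifies} by substituting the claimed $q^*$ and simplifying; both are the same computation read in opposite directions.
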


\begin{proof}
When $q_i(t) = q_i(t-\Delta) = q^*$ for all $t$, we have that $$x_i = (q^*)^p - \frac{1}{N} \sum_{j=1}^{N}(q^*)^p = 0$$ for all $i$ and for all $t$. Thus, each equation in system \ref{mean_field_system} becomes 
\begin{eqnarray}
0 &=& \lambda f(0) - \mu \cdot (q^* \wedge c) - \beta \cdot (q^* - c)^{+} \label{equilibrium_eqn}
\end{eqnarray} if $q^*$ is indeed an equilibrium point. We verify that the right-hand side of Equation \ref{equilibrium_eqn} evaluates to $0$ for the values of $q^*$ and corresponding conditions stated in the theorem.

If $\lambda f(0) \leq \mu c$, then we have
\begin{align}
\overset{\bullet}{q}_i(t) &= \lambda f(0) - \mu \cdot \left( \frac{\lambda f(0)}{\mu} \wedge c \right) - \beta \cdot \left(  \frac{\lambda f(0)}{\mu} - c \right)^+\\
&= \lambda f(0) - \mu \left( \frac{\lambda f(0)}{\mu} \right) - \beta \cdot 0\\
&= \lambda f(0) - \lambda f(0)\\
&= 0.
\end{align}
Thus, $q_i = \frac{\lambda f(0)}{\mu}$ for $i = 1, ..., N$ is an equilibrium point when $\lambda f(0) \leq \mu c$.

If $\lambda f(0) > \mu c$, then we have 
\begin{align}
\overset{\bullet}{q}_i(t) &= \lambda f(0) - \mu \cdot \left( \frac{\lambda f(0) - \mu c + \beta c}{\beta} \wedge c \right) - \beta \cdot \left(  \frac{\lambda f(0) - \mu c + \beta c}{\beta} - c \right)^+\\
&= \lambda f(0) - \mu \cdot \left( \frac{\lambda f(0) - \mu c}{\beta} + c \wedge c \right) - \beta \cdot \left(  \frac{\lambda f(0) - \mu c }{\beta} \right)^+\\
&= \lambda f(0) - \mu c - \beta \frac{\lambda f(0) - \mu c}{\beta}\\
&= 0.
\end{align}
Thus, $q_i = \frac{\lambda f(0) - \mu c + \beta c}{\beta}$ for $i = 1, ..., N$ is an equilibrium point when $\lambda f(0) > \mu c$.

\end{proof}


We now have an equilibrium solution for each of the two parameter cases of interest, namely $\lambda f(0) \leq \mu c$ and $\lambda f(0) > \mu c$. However, moving forward we make the inequality strict when discussing the former region as the linearization analysis may break down at the point of non-differentiability $\lambda f(0) = \mu c$. Next, in Theorem \ref{delta_cr_theorem} we present the critical value of the delay parameter $\Delta$ at which our system changes stability in each of the parameter regions $\lambda f(0) < \mu c$ and $\lambda f(0) > \mu c$. The key practical takeaway of this result is that if the delay in information is larger than the critical value relevant to the parameter region of interest, then the queue lengths will oscillate about the corresponding equilibrium point rather than converging to it.

\begin{theorem}
The system of equations given in Equation \ref{mean_field_system} exhibits a change in stability at a critical value of $\Delta_{cr}$ which has the following expressions 

$$\Delta_{\text{cr}} = \frac{\mu^{p-1} \arccos \left(  \frac{\mu^p}{p \lambda^p f(0)^{p-1} f'(0)} \right)}{\sqrt{p^2 \lambda^{2 p} (f(0))^{2p -2} (f'(0))^2 - \mu^{2p}}} \hspace{5mm} \text{if} \hspace{5mm} \lambda f(0) < \mu c $$ and $$\Delta_{\text{cr}} = \frac{\beta^{p-1} \arccos \left( \frac{\beta^p}{p \lambda f'(0) \left( \lambda f(0) - \mu c + \beta c \right)^{p-1}}{} \right)}{\sqrt{p^2 \lambda^2 \left( f'(0) \right)^2 \left( \lambda f(0) - \mu c + \beta c \right)^{2p-2} - \beta^{2p}}} \hspace{5mm} \text{if} \hspace{5mm} \lambda f(0) > \mu c.$$


\label{delta_cr_theorem}
 \end{theorem}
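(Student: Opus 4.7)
The plan is to perform a standard delay-differential-equation stability analysis on each of the two smooth regimes separated by the non-differentiability of the service/abandonment terms. First I would fix the regime $\lambda f(0) < \mu c$ (the other being identical up to swapping $\mu$ with $\beta$ and using the corresponding $q^*$). Because $q^* = \lambda f(0)/\mu$ lies strictly below $c$, a neighborhood of the symmetric equilibrium satisfies $q_i(t)\wedge c = q_i(t)$ and $(q_i(t)-c)^+ = 0$, so the min/plus terms collapse to the single linear expression $-\mu q_i(t)$ and the system is genuinely smooth there. Setting $u_i(t) := q_i(t)-q^*$ and Taylor-expanding $f$ about $0$ (using that $x_i$ vanishes at the symmetric equilibrium), I obtain
\begin{equation*}
\dot u_i(t) = A\Bigl(u_i(t-\Delta) - \frac{1}{N}\sum_{j=1}^N u_j(t-\Delta)\Bigr) - \mu\, u_i(t),
\end{equation*}
where $A := p\lambda f'(0)(q^*)^{p-1} = p\lambda^p f'(0)\, f(0)^{p-1}/\mu^{p-1}$.

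Next I would exploit the permutation symmetry by diagonalizing the constant matrix acting on the delayed term. In vector form, $\dot u(t) = A(I-J/N)u(t-\Delta) - \mu u(t)$, where $J$ is the all-ones matrix. The projection $I - J/N$ has exactly two eigenspaces: the one-dimensional common-mode subspace spanned by $(1,\dots,1)^\top$ (eigenvalue $0$) and the $(N-1)$-dimensional mean-zero subspace (eigenvalue $1$). On the common mode the linearization reduces to $\dot v = -\mu v$, which is unconditionally stable and plays no role in the bifurcation. On every transverse eigendirection the characteristic equation reduces to
\begin{equation*}
\xi + \mu - A e^{-\xi\Delta} = 0,
\end{equation*}
a scalar equation that is manifestly independent of $N$. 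This is exactly the mechanism responsible for the surprising delay-independence-of-$N$ phenomenon highlighted in the introduction.

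To locate the Hopf threshold, I would substitute $\xi = i\omega$ with $\omega > 0$, separate real and imaginary parts, and obtain $\mu = A\cos(\omega\Delta)$ together with $\omega = -A\sin(\omega\Delta)$. Squaring and adding gives $\omega^2 = A^2 - \mu^2$, so a pure-imaginary root exists provided the coupling is strong enough to satisfy $|A|>\mu$; then $\omega = \sqrt{A^2-\mu^2}$. For the natural load-balancing sign $f'(0)<0$ (so $A<0$) the two constraints place $\omega\Delta$ in $(\pi/2,\pi)$, and the principal branch of arccosine returns the smallest positive crossing $\omega\Delta_{\text{cr}} = \arccos(\mu/A)$. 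Plugging in $A = p\lambda^p f'(0)f(0)^{p-1}/\mu^{p-1}$ and simplifying reproduces the first formula in the theorem. An identical computation in the regime $\lambda f(0) > \mu c$, with $\mu$ replaced by $\beta$ throughout and $q^* = (\lambda f(0) - \mu c + \beta c)/\beta$, yields the second formula.

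The main obstacle is not the algebra but two conceptual checks I would need to justify with care. First, on each side of the non-differentiability the linearization is only meaningful inside the corresponding smooth half-space, so I would verify that the symmetric equilibrium sits strictly in the interior of that region and that small perturbations preserve the sign of $q_i - c$; this is immediate under the strict inequalities stated, and it is precisely why the theorem excludes the borderline case $\lambda f(0)=\mu c$. Second, to upgrade ``imaginary root appears'' into ``stability genuinely changes,'' I would verify the Hopf transversality condition $\operatorname{Re}(d\xi/d\Delta)\neq 0$ at $\Delta=\Delta_{\text{cr}}$ via implicit differentiation of the characteristic equation; this calculation is standard but is the step that converts the algebraic threshold into an actual bifurcation.
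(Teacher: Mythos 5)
Your proposal is correct and follows essentially the same path as the paper: linearize about the symmetric equilibrium in each smooth regime, diagonalize the mean-field coupling matrix $C(I - J/N)$ to split off the always-stable common mode from the $N-1$ identical transverse modes, and solve the scalar characteristic equation $r + \theta - Ce^{-r\Delta}=0$ at $r=i\omega$ to obtain $\omega=\sqrt{C^2-\theta^2}$ and $\Delta_{\text{cr}}=\arccos(\theta/C)/\omega$. Your closing remark that Hopf transversality should be checked is a sound observation that the paper leaves implicit.
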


\begin{proof}
From Theorem \ref{equilibrium_theorem}, we obtained an equilibrium solution $q_1 = \cdots = q_N = q^*$ in each of the two parameter regions of interest, namely $\lambda f(0) < \mu c$ and $\lambda f(0) > \mu c$. We proceed by linearizing the system \ref{mean_field_system} about the equilibrium in each case. Let $$q_i(t) = q^* + u_i(t) \hspace{5mm} i=1,...,N$$ where $$q^* = \frac{\lambda f(0)}{\mu} \hspace{5mm} \text{if} \hspace{5mm} \lambda f(0) < \mu c $$ and $$q^* = \frac{\lambda f(0) - \mu c + \beta c}{\beta} \hspace{5mm} \text{if} \hspace{5mm} \lambda f(0) > \mu c.$$ 
\noindent Applying this change of variables, the system becomes
\begin{align*}
\overset{\bullet}{u}_i(t) &= F(u_i(t), u_1(t - \Delta), ..., u_N(t- \Delta))\\
&:= F(u_i, u_{\Delta 1}, ..., u_{\Delta N})\\
&= \lambda f \left( \left(  q^* + u_{\Delta i} \right)^p - \frac{1}{N} \sum_{j=1}^{N} \left( q^* + u_{\Delta j} \right)^p \right) - \mu \left( (q^* + u_i)  \wedge c \right) - \beta \left( u_i + q^* - c \right)^+.
\end{align*} 
We want to consider the linearized system for $u_1$ and $u_2$ where we assume that $u_1$ and $u_2$ are initially arbitrarily small and nonzero with the understanding that our linearization is only necessarily a good approximation to the nonlinear system for sufficiently small values of $t$. In light of this consideration, we examine how the form of system simplifies in each case. First, when $\lambda f(0) < \mu c$ we have 
\begin{align*}
\overset{\bullet}{u}_i(t) &= \lambda f \left( \left(  \frac{\lambda f(0)}{\mu} + u_{\Delta i} \right)^p - \frac{1}{N} \sum_{j=1}^{N} \left( \frac{\lambda f(0)}{\mu} + u_{\Delta j} \right)^p \right) - \mu \left( \left( \frac{\lambda f(0)}{\mu} + u_i \right)  \wedge c \right)\\
&- \beta \left( \frac{\lambda f(0)}{\mu} + u_i - c \right)^+\\
&= \lambda f \left( \left(  \frac{\lambda f(0)}{\mu} + u_{\Delta i} \right)^p - \frac{1}{N} \sum_{j=1}^{N} \left( \frac{\lambda f(0)}{\mu} + u_{\Delta j} \right)^p \right) - \mu \left(  \frac{\lambda f(0)}{\mu} + u_i \right)\\
&= \lambda f \left( \left(  \frac{\lambda f(0)}{\mu} + u_{\Delta i} \right)^p - \frac{1}{N} \sum_{j=1}^{N} \left( \frac{\lambda f(0)}{\mu} + u_{\Delta j} \right)^p \right) - \mu u_i(t) - \lambda f(0)
\end{align*} where we note that making the inequality strict $\lambda f(0) < 
\mu c$ was important to ensure that $\frac{\lambda f(0)}{\mu} + u_i \leq c$ for some arbitrarily small (but nonzero) $u_i$. When $\lambda f(0) > \mu c$ we have 
\begin{align*}
\overset{\bullet}{u}_i(t) &= \lambda f \left( \left(  \frac{\lambda f(0) - \mu c + \beta c}{\beta} + u_{\Delta i} \right)^p - \frac{1}{N} \sum_{j=1}^{N} \left( \frac{\lambda f(0) - \mu c + \beta c}{\beta} + u_{\Delta j} \right)^p \right)\\
&- \mu \left( \left( \frac{\lambda f(0) - \mu c + \beta c}{\beta} + u_i \right)  \wedge c \right) - \beta \left( \frac{\lambda f(0) - \mu c + \beta c}{\beta} + u_i - c \right)^+\\
&= \lambda f \left( \left(  \frac{\lambda f(0) - \mu c + \beta c}{\beta} + u_{\Delta i} \right)^p - \frac{1}{N} \sum_{j=1}^{N} \left( \frac{\lambda f(0) - \mu c + \beta c}{\beta} + u_{\Delta j} \right)^p \right)\\
&- \mu c - \beta \left( \frac{\lambda f(0) - \mu c + \beta c}{\beta} + u_i - c \right)\\
&= \lambda f \left( \left(  \frac{\lambda f(0) - \mu c + \beta c}{\beta} + u_{\Delta i} \right)^p - \frac{1}{N} \sum_{j=1}^{N} \left( \frac{\lambda f(0) - \mu c + \beta c}{\beta} + u_{\Delta j} \right)^p \right) - \beta u_i(t) - \lambda f(0).
\end{align*}
We notice that the form of the system is the same for each parameter region except for the equilibrium solution appearing inside $f$ as well as the parameter on the linear non-delayed term.  For the $i^{\text{th}}$ equation, we have $$\frac{\partial F}{\partial u_i} =  
\begin{cases} 
       -\mu, & \lambda f(0) < \mu c \\
       -\beta, & \lambda f(0) > \mu c 
 \end{cases}$$ $$\frac{\partial F}{\partial u_{\Delta i}} \Bigg|_{(u_i, u_{\Delta 1}, ..., u_{\Delta N}) = (0, 0, ..., 0)} =
 \begin{cases} 
        \lambda f'(0) \cdot \frac{N - 1}{N} \cdot p \left( \frac{\lambda f(0)}{\mu}  \right)^{p-1}, & \lambda f(0) < \mu c \\
        \lambda f'(0) \cdot \frac{N - 1}{N} \cdot p \left( \frac{\lambda f(0) - \mu c + \beta c}{\beta}  \right)^{p-1}, & \lambda f(0) > \mu c 
 \end{cases}
$$ $$\frac{\partial F}{\partial u_{\Delta j}} \Bigg|_{(u_i, u_{\Delta 1}, ..., u_{\Delta N}) = (0, 0, ..., 0)} =
 \begin{cases} 
        \lambda f'(0) \cdot \left(- \frac{1}{N}  \right) \cdot p \left( \frac{\lambda f(0)}{\mu} \right)^{p-1}, & \lambda f(0) < \mu c \\
        \lambda f'(0) \cdot \left(- \frac{1}{N}  \right) \cdot p \left( \frac{\lambda f(0) - \mu c + \beta c}{\beta} \right)^{p-1}, & \lambda f(0) > \mu c 
 \end{cases}$$ for $j \neq i$.

After linearizing the system, we want to plug in $e^{rt}$ and solve for the critical delay. Before doing this, we want to uncouple our system. Our linearized system is currently in the form $$\overset{\bullet}{u} = A u_{\Delta} - \theta u$$ where $$\theta := \begin{cases}
\mu, & \lambda f(0) < \mu c\\
\beta, & \lambda f(0) > \mu c
\end{cases}$$ and $$A = C \begin{bmatrix}
\frac{N-1}{N} & - \frac{1}{N}  & - \frac{1}{N}  \dots  - \frac{1}{N}\\
- \frac{1}{N} & \frac{N-1}{N} & - \frac{1}{N} \dots  - \frac{1}{N}\\
- \frac{1}{N} & - \frac{1}{N} & \frac{N-1}{N} \dots - \frac{1}{N}\\
\vdots & & \ddots   \vdots\\
- \frac{1}{N} & - \frac{1}{N} & - \frac{1}{N} \dots \frac{N-1}{N}
\end{bmatrix}$$ where $$C :=  \begin{cases} 
        p \lambda f'(0) \left( \frac{\lambda f(0)}{\mu} \right)^{p - 1}, & \lambda f(0) < \mu c \\
        p \lambda f'(0) \left( \frac{\lambda f(0) - \mu c + \beta c}{\beta} \right)^{p - 1}, & \lambda f(0) > \mu c 
 \end{cases}.$$

The matrix $A$ has eigenvalues $0$ (with multiplicity 1) and $C$ (with multiplicity $N-1$). Let $E$ be the matrix whose columns are the eigenvectors of $A$ (start with the eigenvector corresponding to $0$ for ease of notation) so that $$AE = ED$$ where $$D = \begin{bmatrix}
0\\
& C\\
& & C\\
& & & \ddots\\
& & & & C
\end{bmatrix}.$$ Now we let $u = Ev$ and substitute this into our system. $$\overset{\bullet}{u} = A u_{\Delta} - \theta u$$ $$\iff$$ $$E \overset{\bullet}{v} = AE v_{\Delta} - \theta E v$$ $$\iff$$ $$E \overset{\bullet}{v} = ED v_{\Delta} - \theta E v$$ $$\iff$$ $$\overset{\bullet}{v} = D v_{\Delta} - \theta v$$ Thus, we have our uncoupled system. Less compactly written, the system is 

\begin{align*}
\overset{\bullet}{v}_1 &= - \theta v_1\\
\overset{\bullet}{v}_2 &= C v_{\Delta 2} - \theta v_2\\
\vdots\\
\overset{\bullet}{v}_N &= C v_{\Delta N} - \theta v_N.
\end{align*}

The first equation in the system has solution $v_1(t) = \tilde{c} \exp(-\theta t)$ which decays with time since $\theta > 0$ in both cases, so this first equation is not essential to the stability analysis as its solution is always stable. Now look at the $j^{\text{th}}$ equation and let $v_j = e^{rt}$ for $2 \leq j \leq N$. This gives us the characteristic equation $$r - C e^{- r \Delta} + \theta = 0.$$ Now let $r = i \omega$ to find where the stability changes.

$$i \omega - C \cos(\omega \Delta) + i C \sin(\omega \Delta) + \theta = 0$$ $$\sin(\omega \Delta) = - \frac{\omega}{C} \hspace{5mm} \text{and} \hspace{5mm} \cos(\omega \Delta) = \frac{\theta}{C}.$$

$$\omega = \sqrt{C^2 - \theta^2}$$ and $$\Delta = \frac{\arccos \left(  \frac{\theta}{C} \right) }{\omega}$$

\noindent Thus, it follows that $$\Delta_{\text{cr}} = 
\frac{\mu^{p-1} \arccos \left(  \frac{\mu^p}{p \lambda^p f(0)^{p-1} f'(0)} \right)}{\sqrt{p^2 \lambda^{2 p} (f(0))^{2p -2} (f'(0))^2 - \mu^{2p}}} \hspace{5mm} \text{if} \hspace{5mm} \lambda f(0) < \mu c$$ and 
$$\Delta_{\text{cr}} = \frac{\beta^{p-1} \arccos \left( \frac{\beta^p}{p \lambda f'(0) \left( \lambda f(0) - \mu c + \beta c \right)^{p-1}}{} \right)}{\sqrt{p^2 \lambda^2 \left( f'(0) \right)^2 \left( \lambda f(0) - \mu c + \beta c \right)^{2p-2} - \beta^{2p}}} \hspace{5mm} \text{if} \hspace{5mm} \lambda f(0) > \mu c.
$$

\end{proof}



\subsection{Numerical Experiments of Mean Field Queueing Models}

Now that we have calculated the critical delay corresponding to each of the two relevant parameter regions, we numerically verify our results in the figures below using $f(x) = \frac{1}{1 + \exp(\theta x)}$ and $f(x) = \int^{\infty}_{x} \frac{1}{\sqrt{2\pi}} e^{-y^2/2}dy$. In Figures \ref{fig1} and \ref{fig2}, we consider examples where our system contains $N=2$ queues with $f(x) = \frac{1}{1 + \exp(\theta x)}$. In Figure \ref{fig1} we consider the parameter case where $\lambda f(0) < \mu c$ when the delay parameter $\Delta$ is both below and above the critical delay value. In the former (stable) case, we see that the queue lengths approach the equilibrium solution whereas in the latter (unstable) case the queue lengths oscillate about the equilibrium solution. In the unstable case, the oscillations of the queue length approach a limiting amplitude (which becomes more apparent after integrating for longer time).The phase diagrams included plot the derivative with respect to time of the queue lengths against the queue lengths and one can easily see the convergence to the equilibrium point in the stable case the existence of a periodic orbit in the unstable case. Figure \ref{fig2} considers the analogous information in the other parameter case where $\lambda f(0) > \mu c$. Similarly, Figures \ref{fig3} and \ref{fig4} consider both the stable and unstable cases for each parameter region when the system has $N = 3$ queues. In Figures \ref{fig7} through \ref{fig10} we consider the same but for $f(x) =  \int^{\infty}_{x} \frac{1}{\sqrt{2\pi}} e^{-y^2/2}dy$.

Our formula for the critical delay is different depending on which parameter region is of interest and recall that this formula is only valid away from the point of non-differentiability that is present at the transition between these two parameter regions. In light of this, it is unclear in what way the value of the critical delay may vary as this transition region is approached and crossed. We can explore this numerically by varying $c$ while keeping all other parameters fixed and numerically approximating the critical delay for each value of $c$ considered. The results of this procedure are demonstrated in Figures \ref{fig5} and \ref{fig6} for $f(x) = \frac{1}{1 + \exp(\theta x)}$ and Figures \ref{fig11} and \ref{fig12} for $f(x) = \int^{\infty}_{x} \frac{1}{\sqrt{2\pi}} e^{-y^2/2}dy$. We see that the value of the critical delay appears to smoothly vary as the transition region is crossed in each of the cases considered.

\begin{figure}[ht!]
\hspace{-15mm} \includegraphics[scale=.33]{./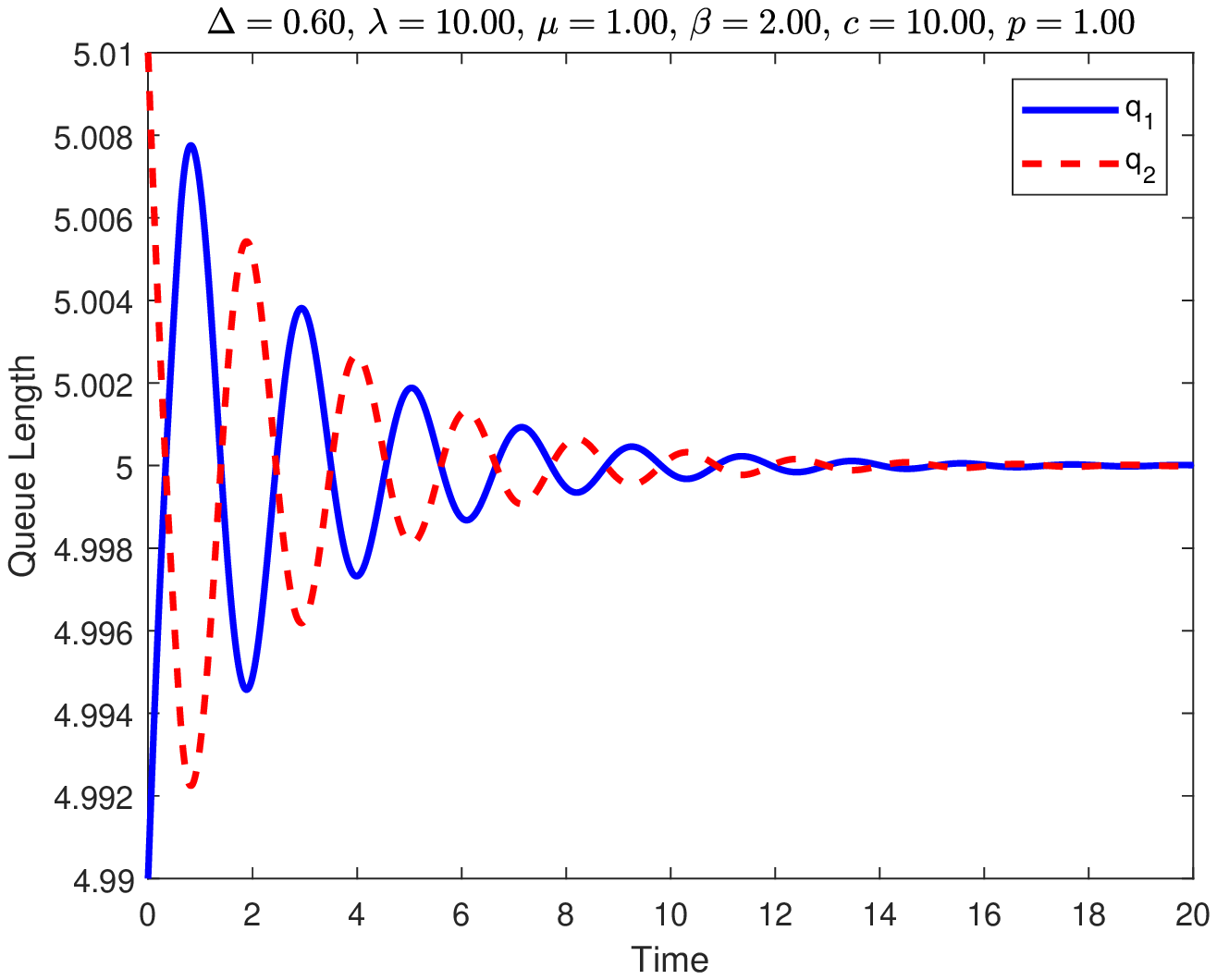}\includegraphics[scale=.33]{./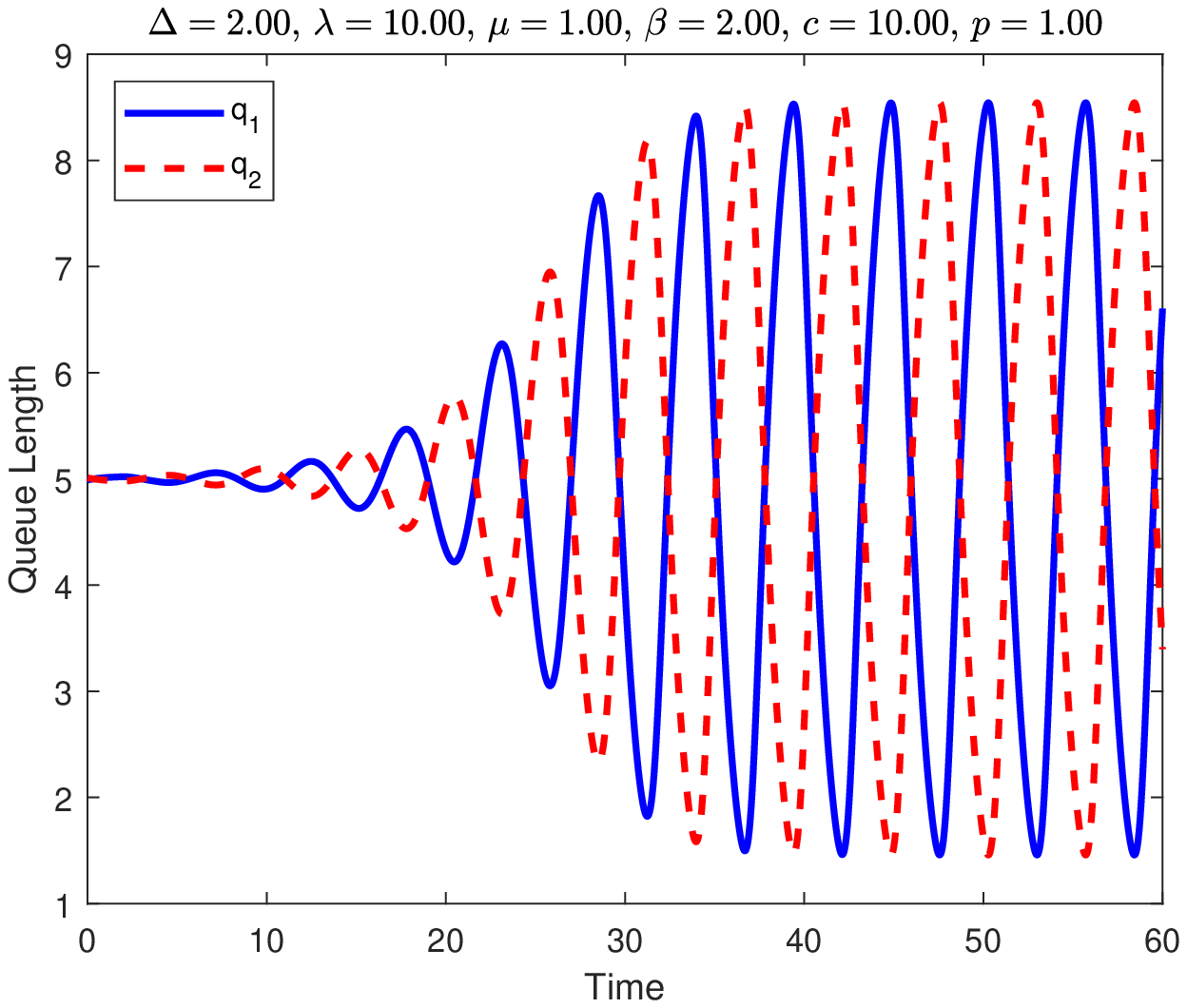}\includegraphics[scale=.33]{./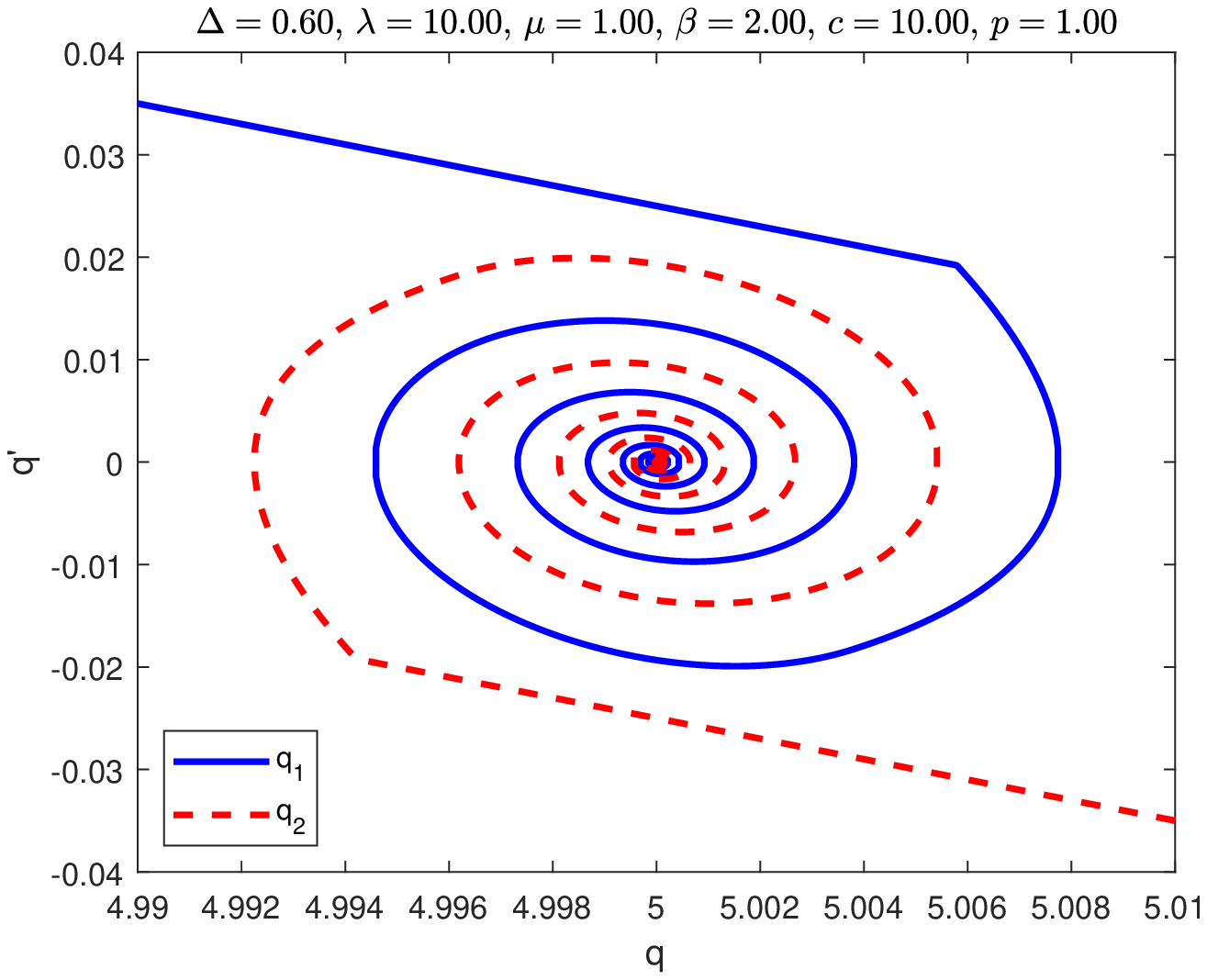}\includegraphics[scale=.33]{./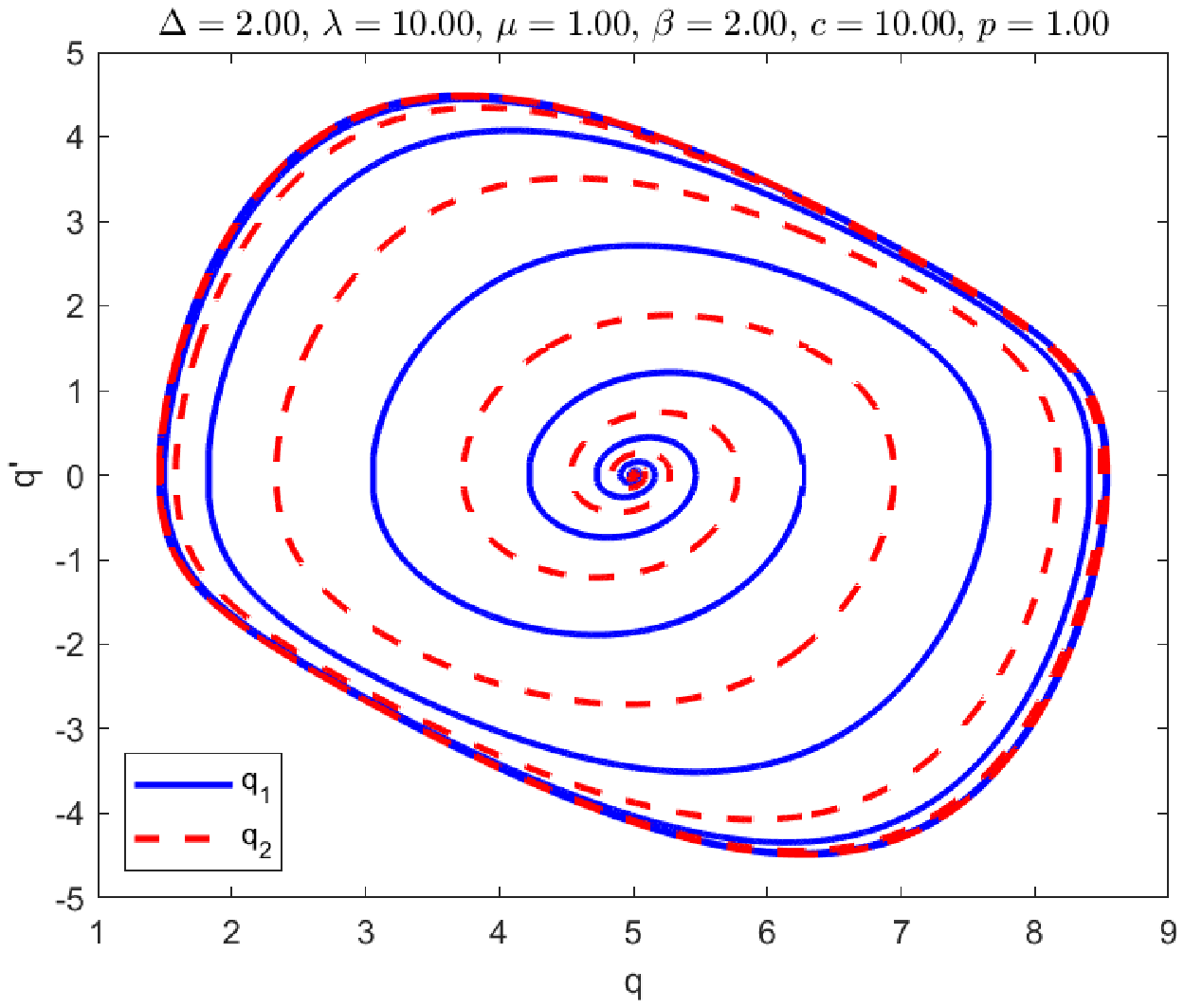}
\caption{Before and after the change in stability in the case where $\lambda f(0) < \mu c$ with constant history function on $[-\Delta, 0]$ with $q_1 = 4.99$ and $q_2 = 5.01$, $N = 2, \lambda = 10$, $\mu = 1$, $\beta=2$, $c=10$, $p = 1$, $f(x) = 1/(1 + \exp(\theta x))$. The left two plots are queue length versus time with $\Delta = .6$ (Left) and $\Delta = 2$ (Right). The right two plots are phase plots of the queue length derivative with respect to time against queue length for $\Delta = .6$ (Left) and $\Delta = 2$ (Right).}
\label{fig1}
\end{figure}

\begin{figure}[ht!]
\hspace{-15mm} \includegraphics[scale=.33]{./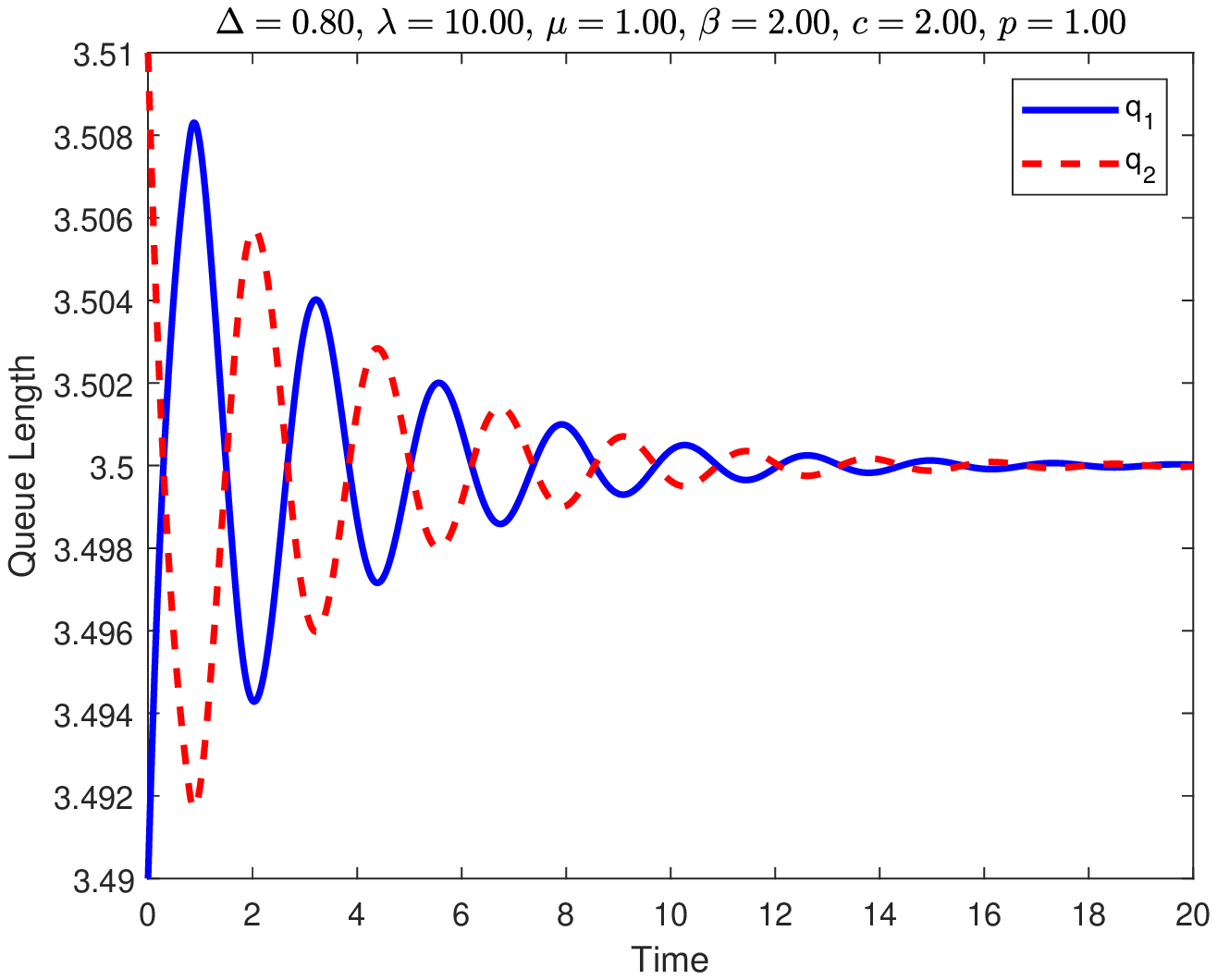}\includegraphics[scale=.33]{./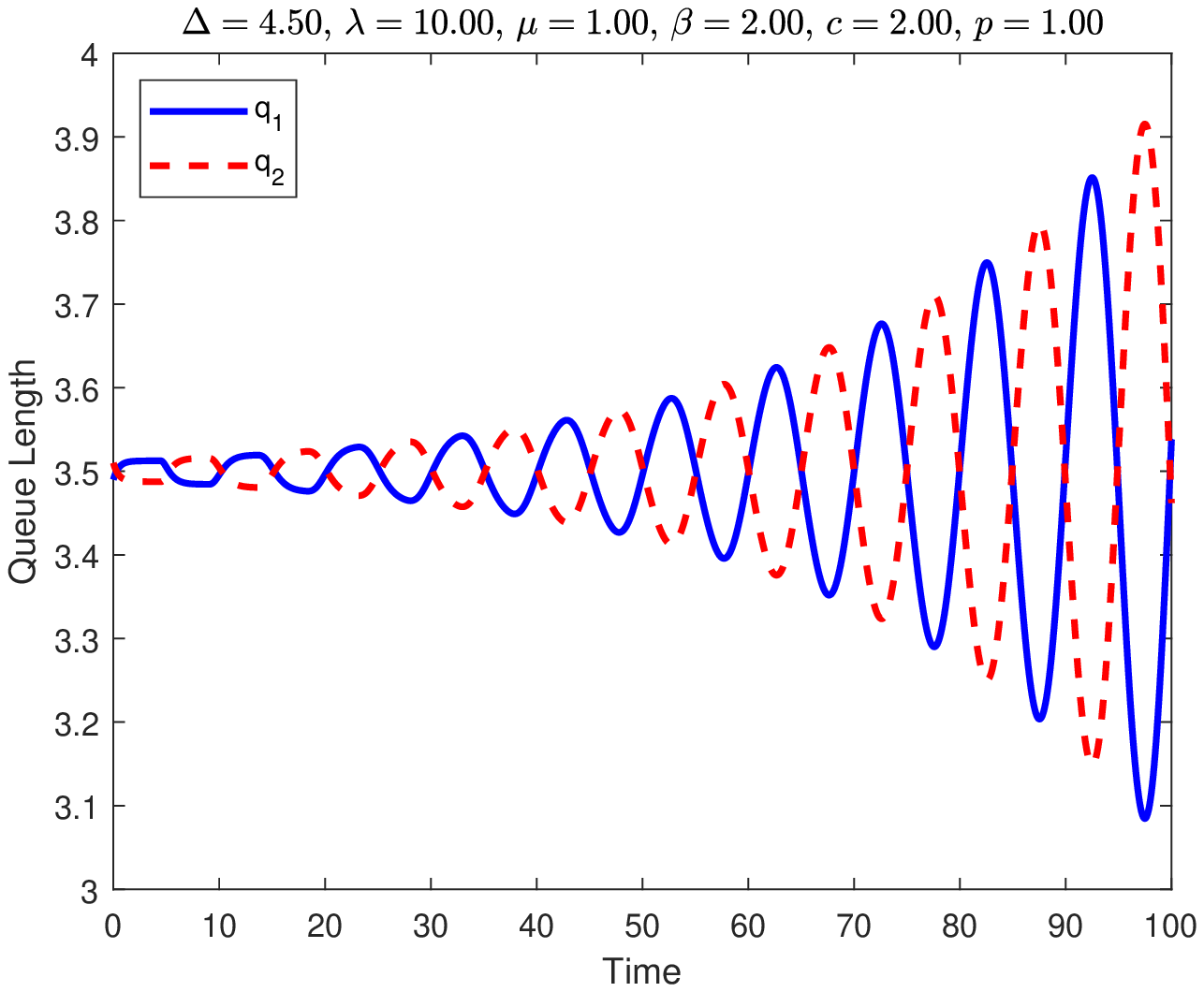}\includegraphics[scale=.33]{./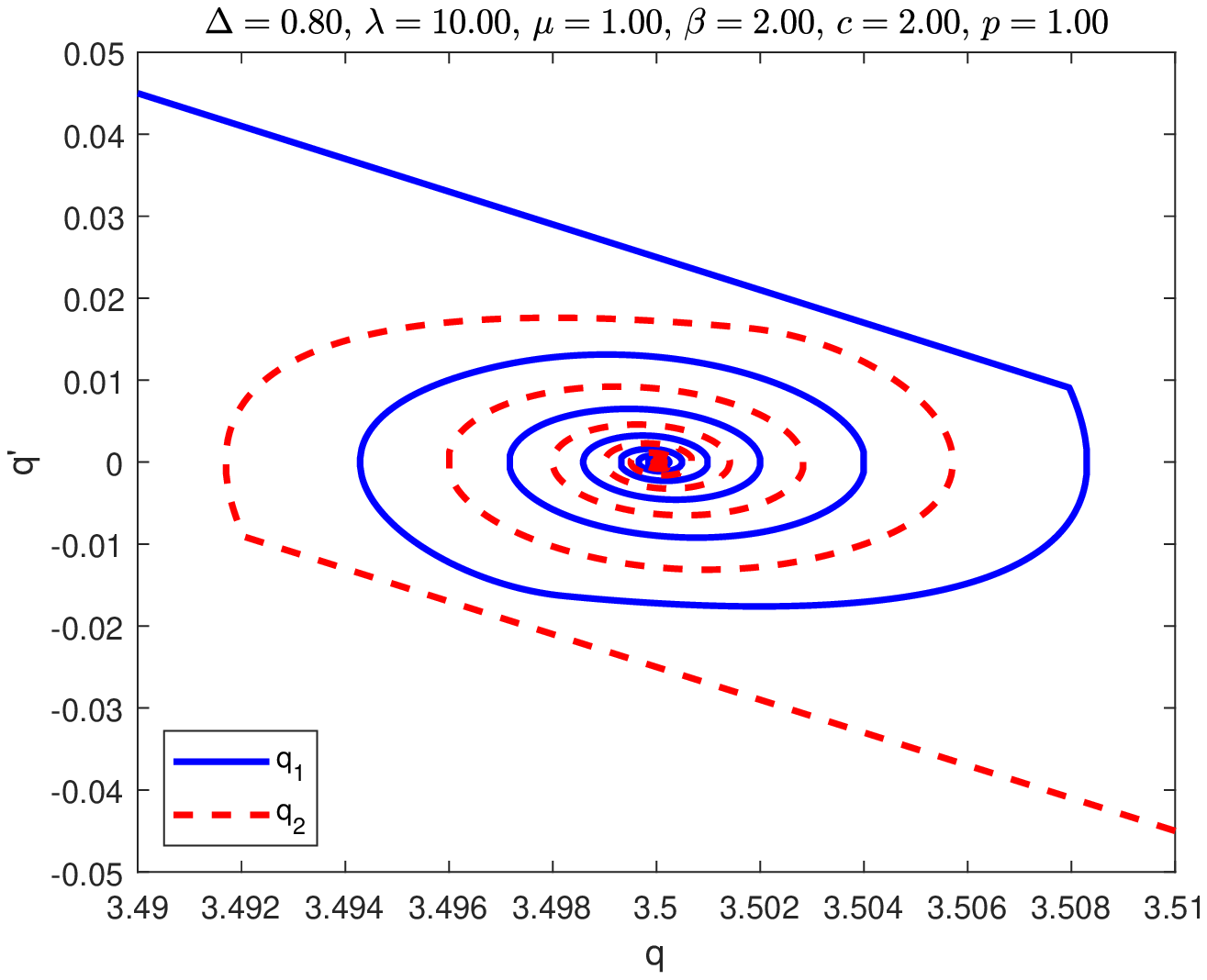}\includegraphics[scale=.33]{./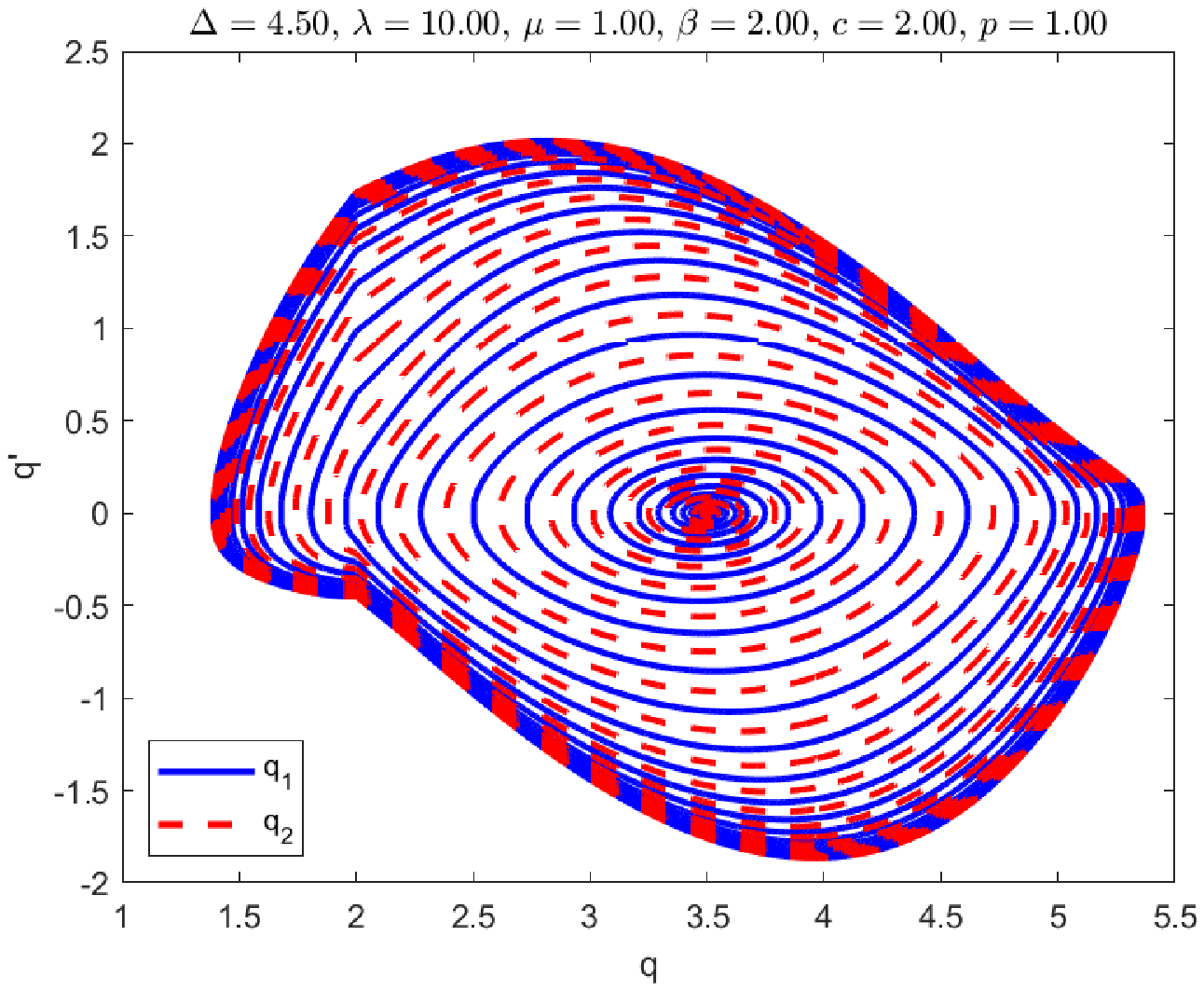}
\caption{Before and after the change in stability in the case where $\lambda f(0) > \mu c$ with constant history function on $[-\Delta, 0]$ with $q_1 = 4.99$ and $q_2 = 5.01$, $N = 2, \lambda = 10$, $\mu = 1$, $\beta=2$, $c=2$, $p = 1$, $f(x) = 1/(1 + \exp(\theta x))$. The left two plots are queue length versus time with $\Delta = .8$ (Left) and $\Delta = 4.5$ (Right). The right two plots are phase plots of the queue length derivative with respect to time against queue length for $\Delta = .8$ (Left) and $\Delta = 4.5$ (Right).}
\label{fig2}
\end{figure}

\begin{figure}[ht!]
\hspace{-15mm}\includegraphics[scale=.33]{./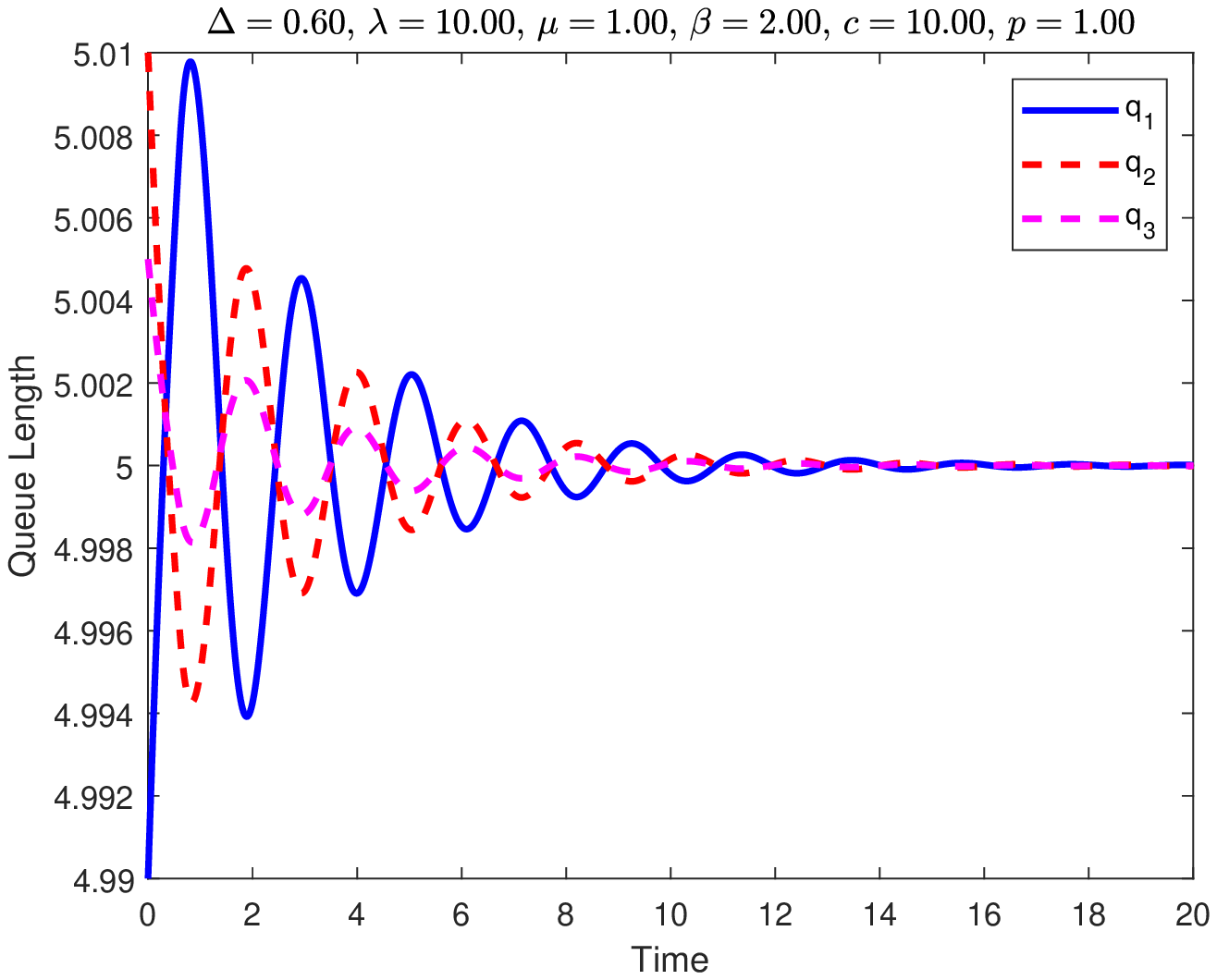}\includegraphics[scale=.33]{./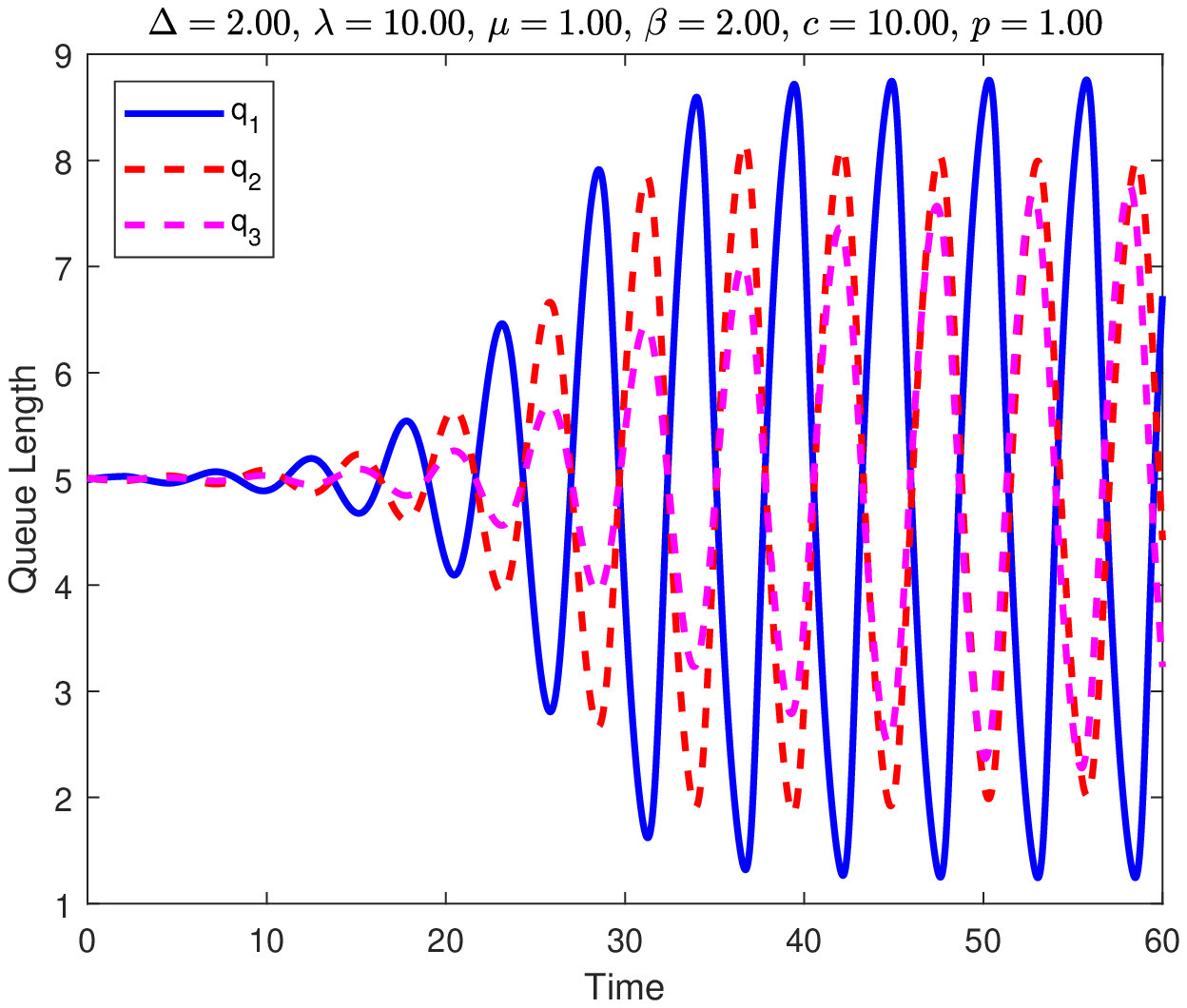}\includegraphics[scale=.33]{./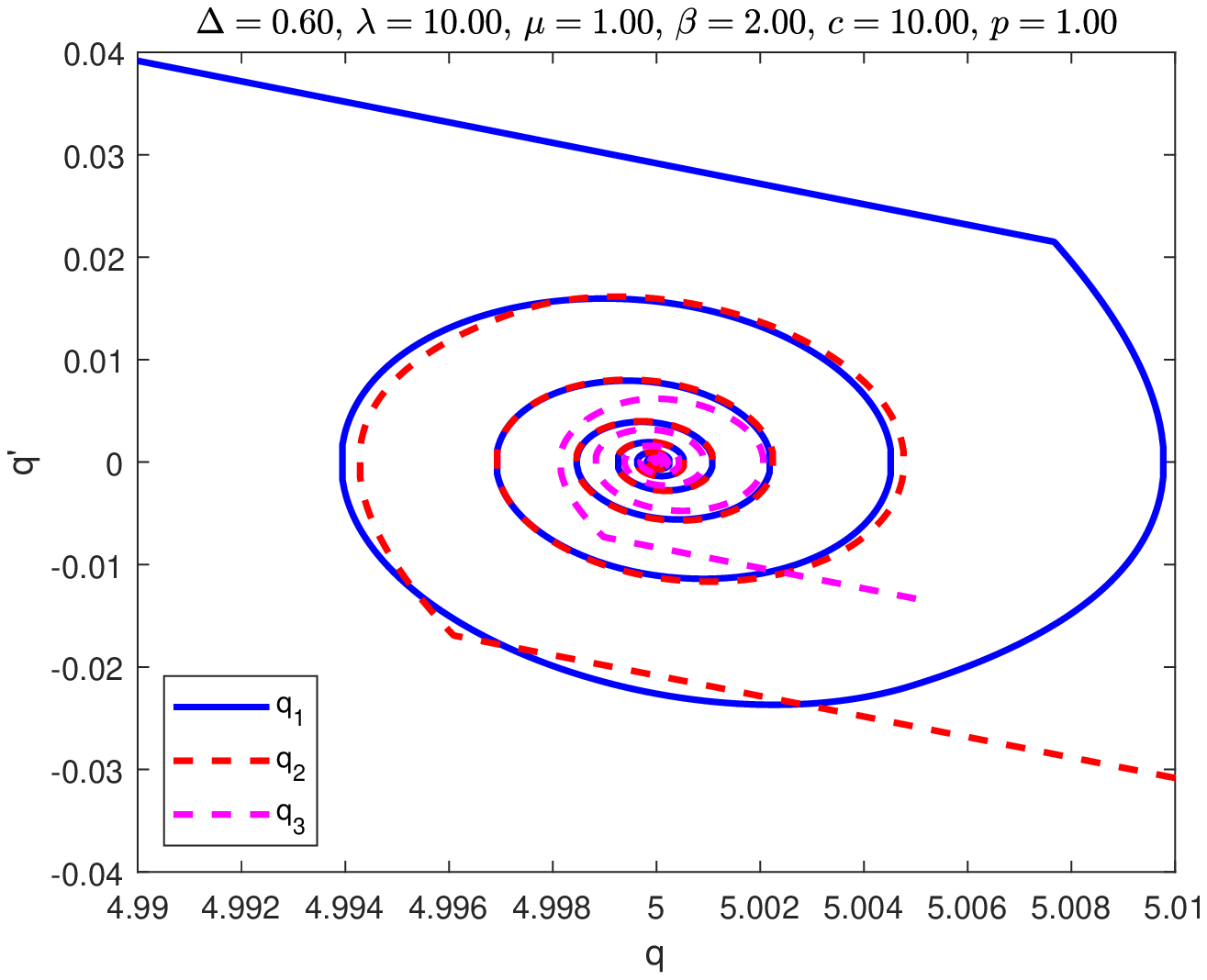}\includegraphics[scale=.33]{./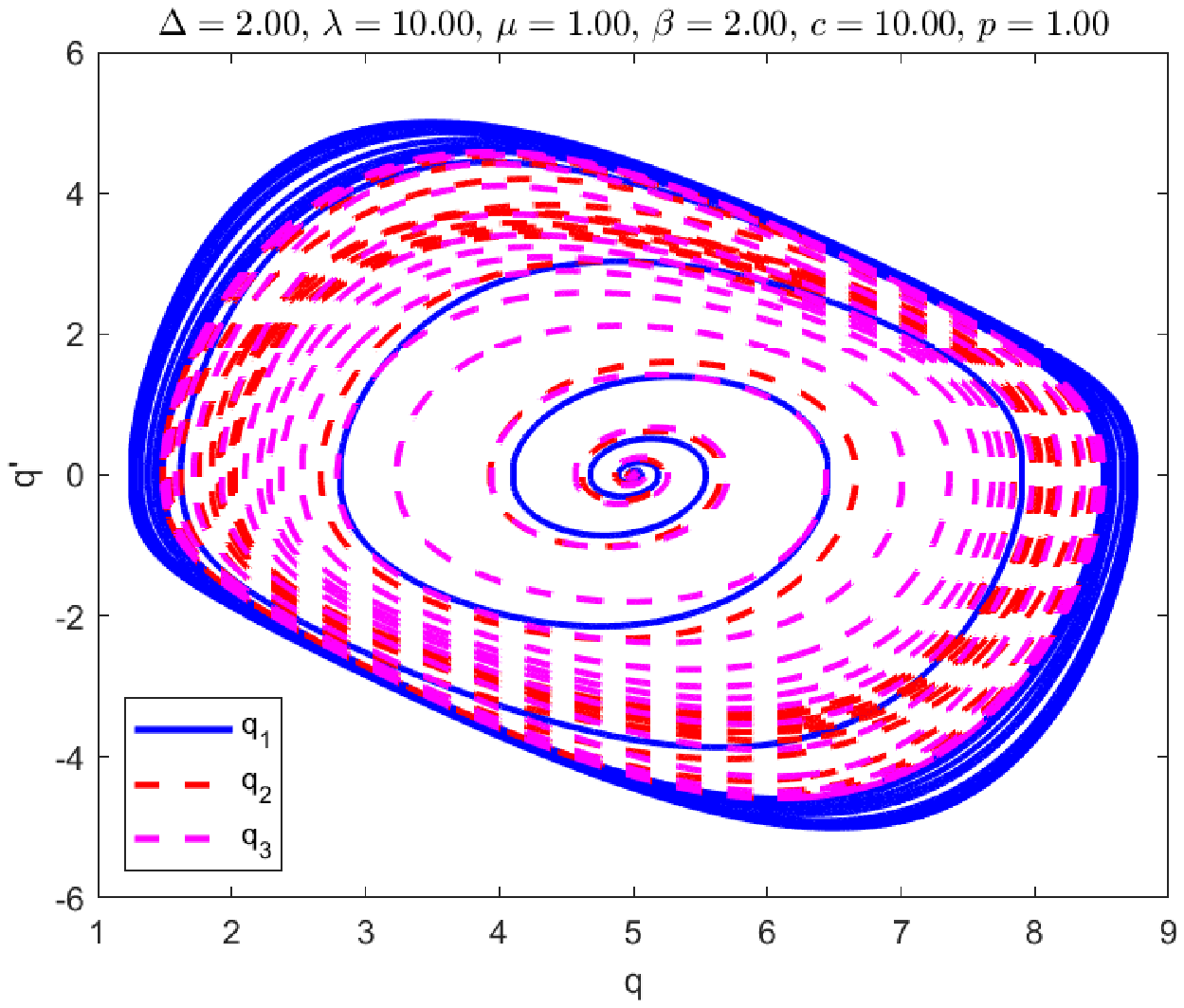}
\caption{Before and after the change in stability in the case where $\lambda f(0) < \mu c$ with constant history function on $[-\Delta, 0]$ with $q_1 = 4.99$, $q_2 = 5.01$, and $q_3 = 5.005$, $N = 3, \lambda = 10$, $\mu = 1$, $\beta=2$, $c=10$, $p = 1$, $f(x) = 1/(1 + \exp(\theta x))$. The left two plots are queue length versus time with $\Delta = .6$ (Left) and $\Delta = 2$ (Right). The right two plots are phase plots of the queue length derivative with respect to time against queue length for $\Delta = .6$ (Left) and $\Delta = 2$ (Right).}
\label{fig3}
\end{figure}

\begin{figure}[ht!]
\hspace{-15mm}\includegraphics[scale=.33]{./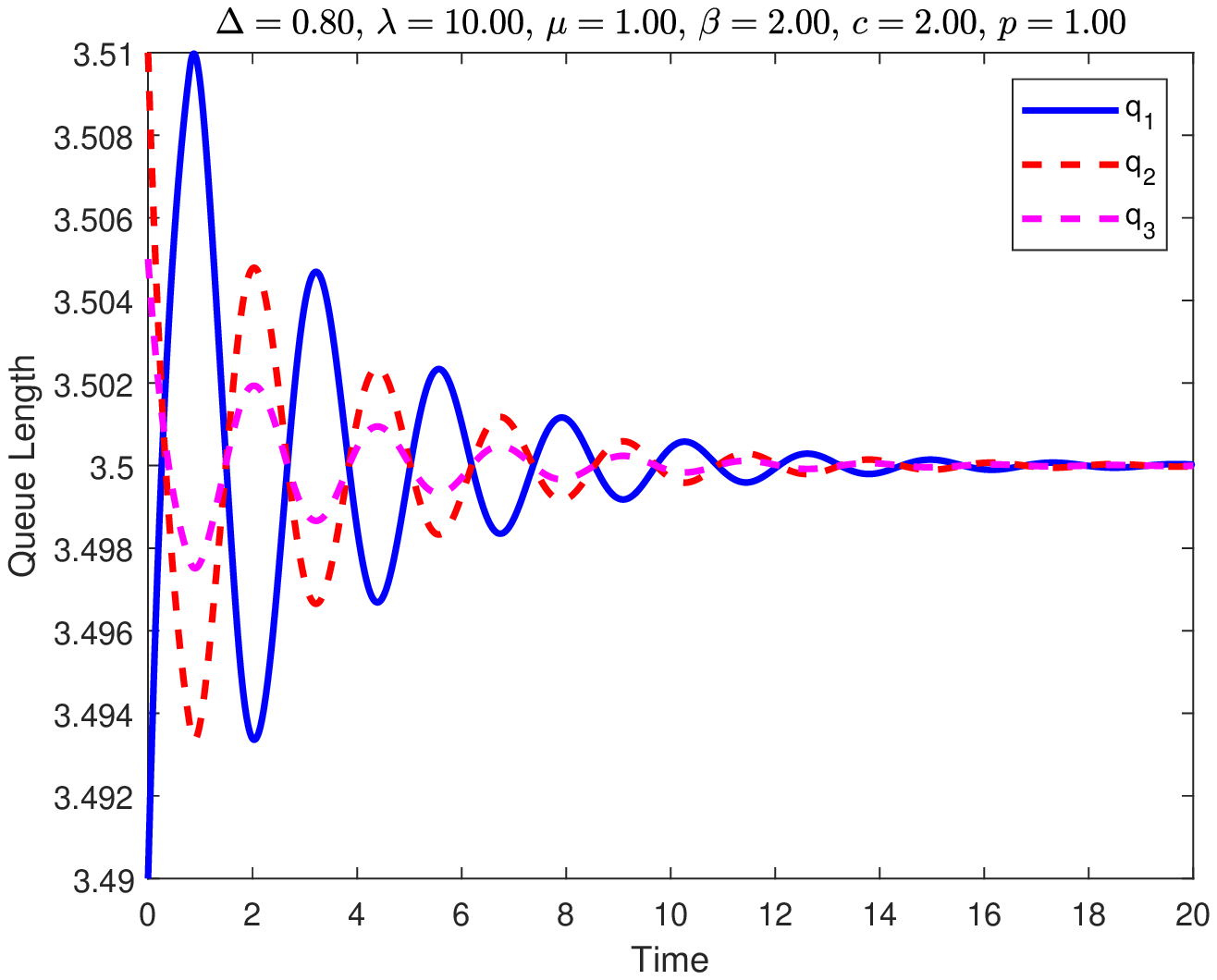}\includegraphics[scale=.33]{./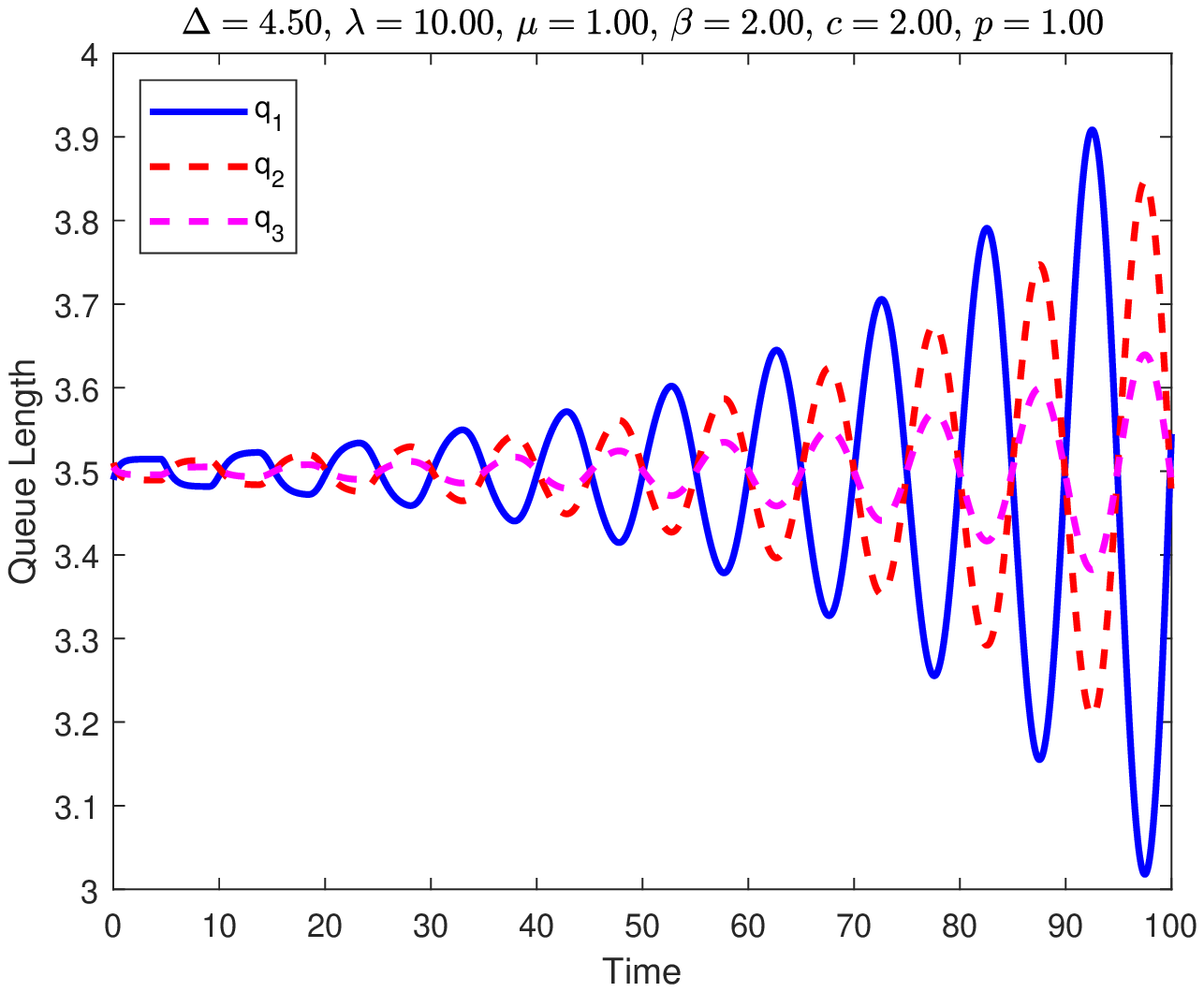}\includegraphics[scale=.33]{./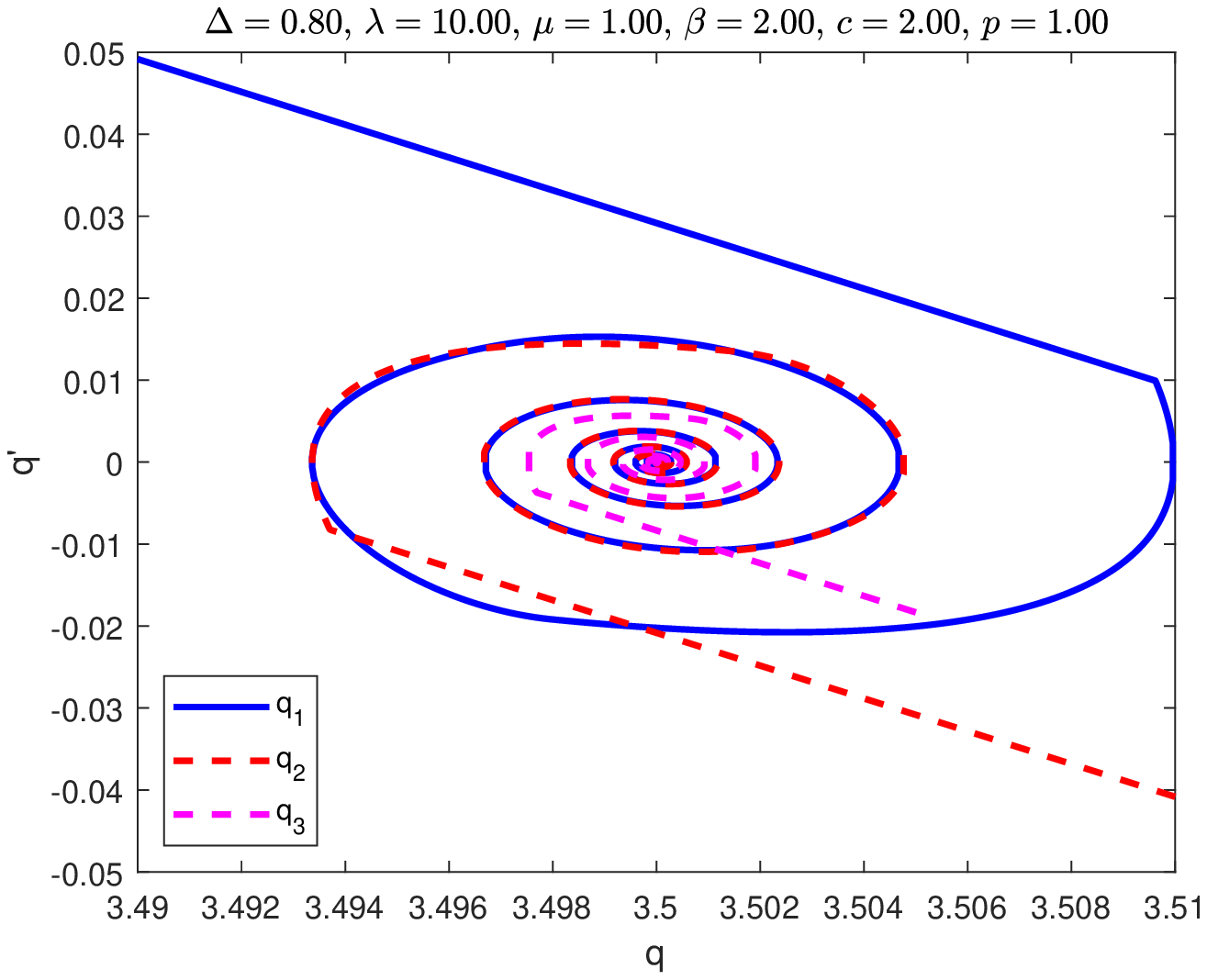}\includegraphics[scale=.33]{./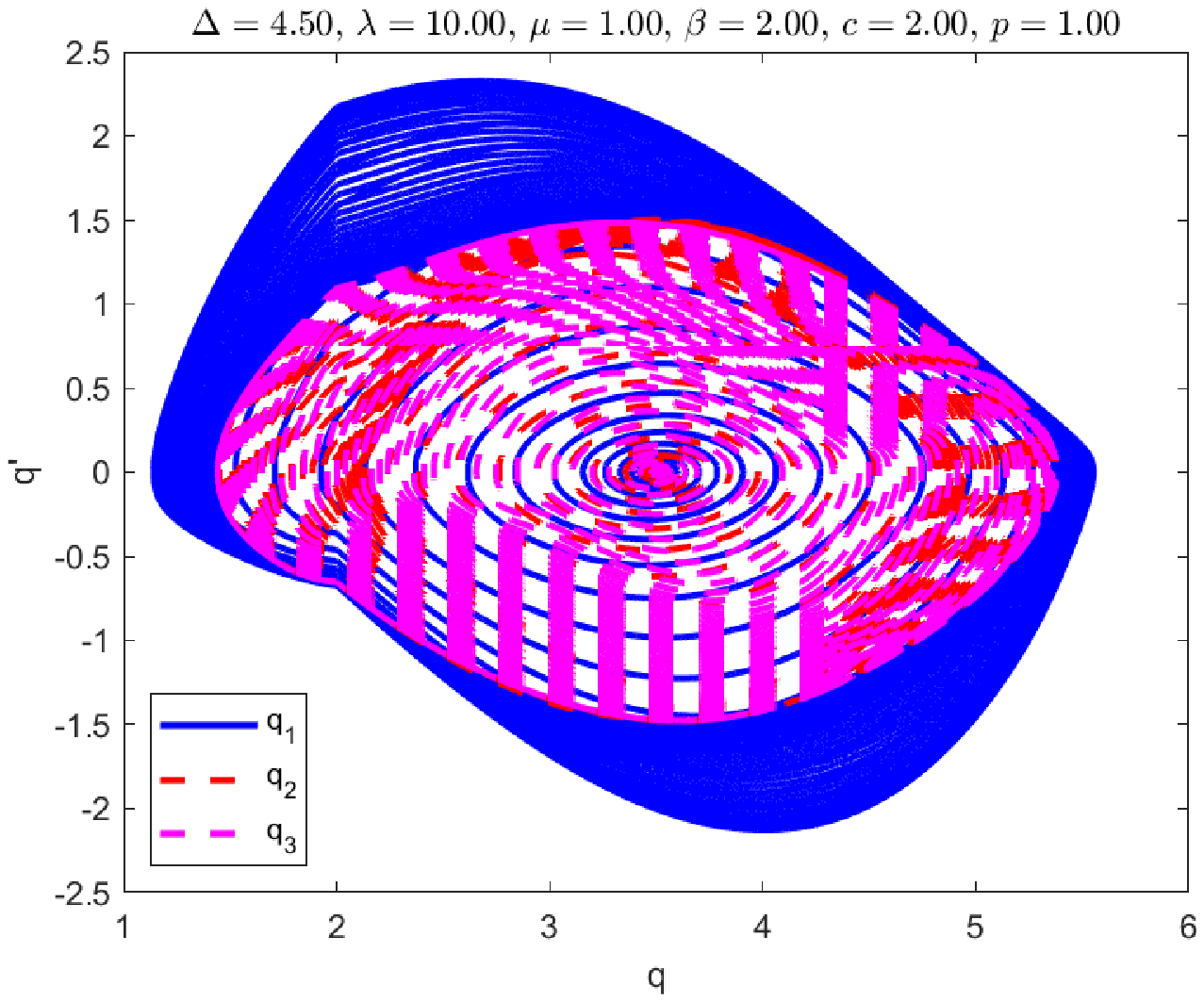}
\caption{Before and after the change in stability in the case where $\lambda f(0) > \mu c$ with constant history function on $[-\Delta, 0]$ with $q_1 = 4.99$, $q_2 = 5.01$, and $q_3 = 5.005$, $N = 3, \lambda = 10$, $\mu = 1$, $\beta=2$, $c=2$, $p = 1$, $f(x) = 1/(1 + \exp(\theta x))$. The left two plots are queue length versus time with $\Delta = .8$ (Left) and $\Delta = 4.5$ (Right). The right two plots are phase plots of the queue length derivative with respect to time against queue length for $\Delta = .8$ (Left) and $\Delta = 4.5$ (Right).}
\label{fig4}
\end{figure}

\begin{figure}[ht!]
\hspace{-15mm}\includegraphics[scale=.45]{./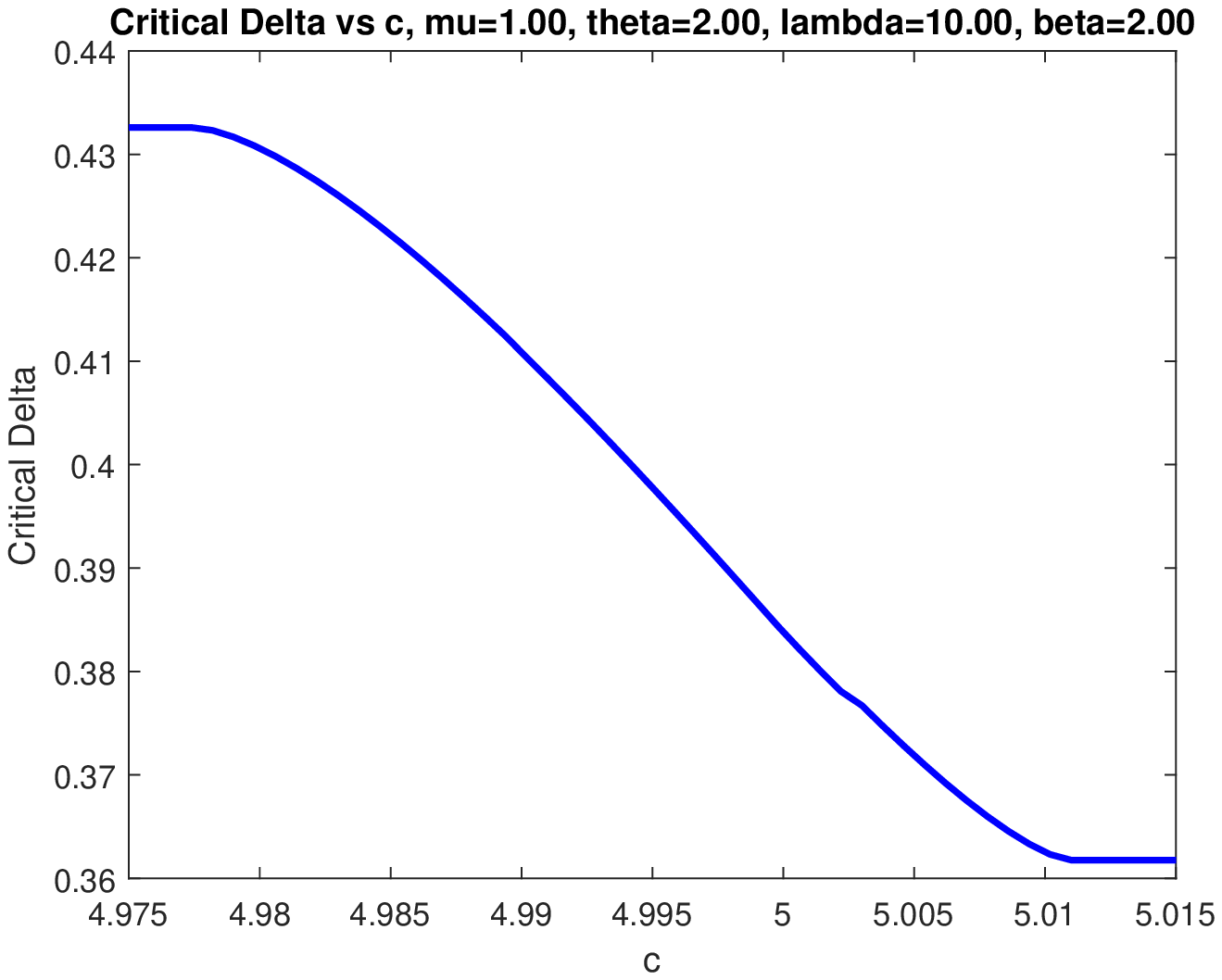}\includegraphics[scale=.45]{./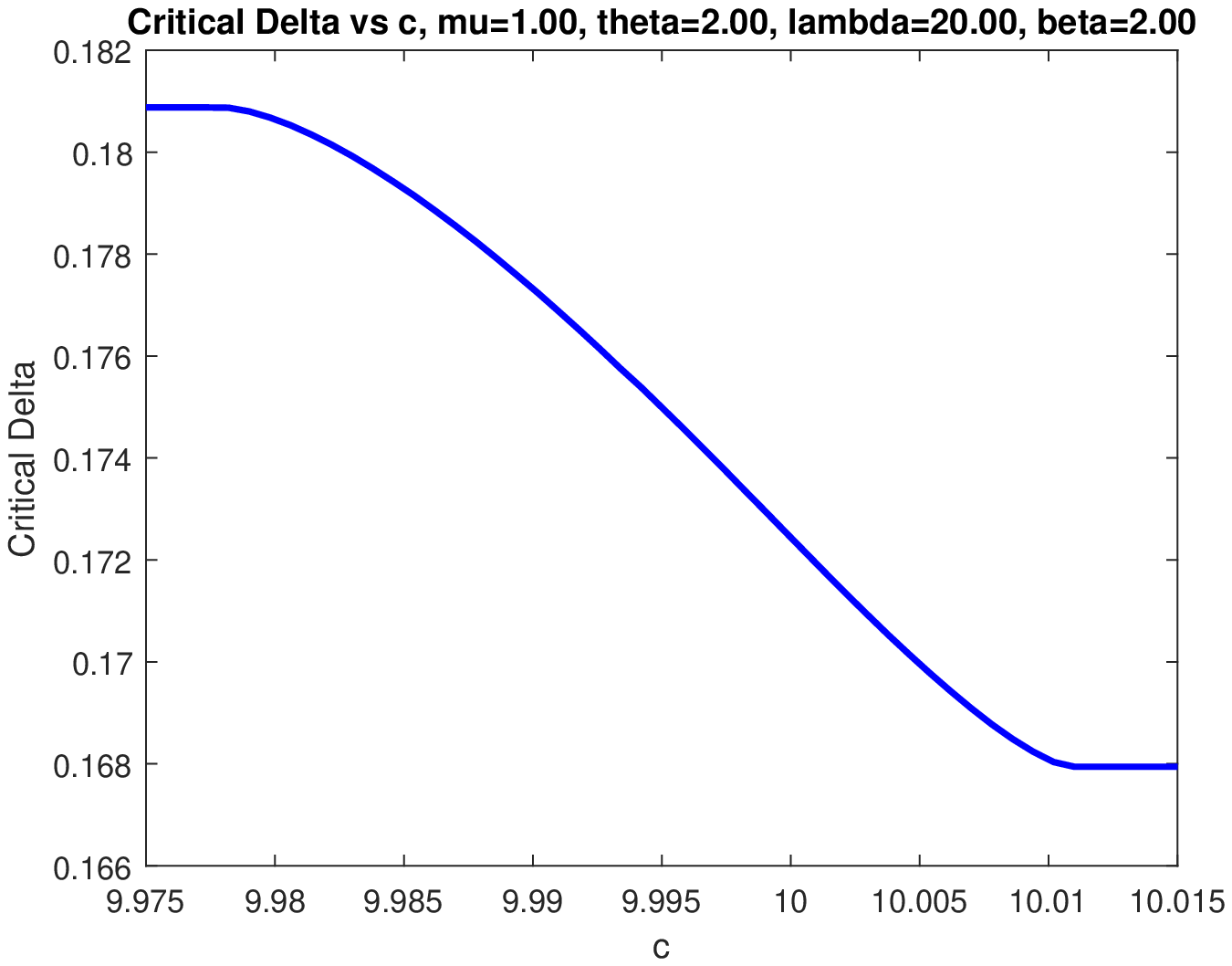}\includegraphics[scale=.45]{./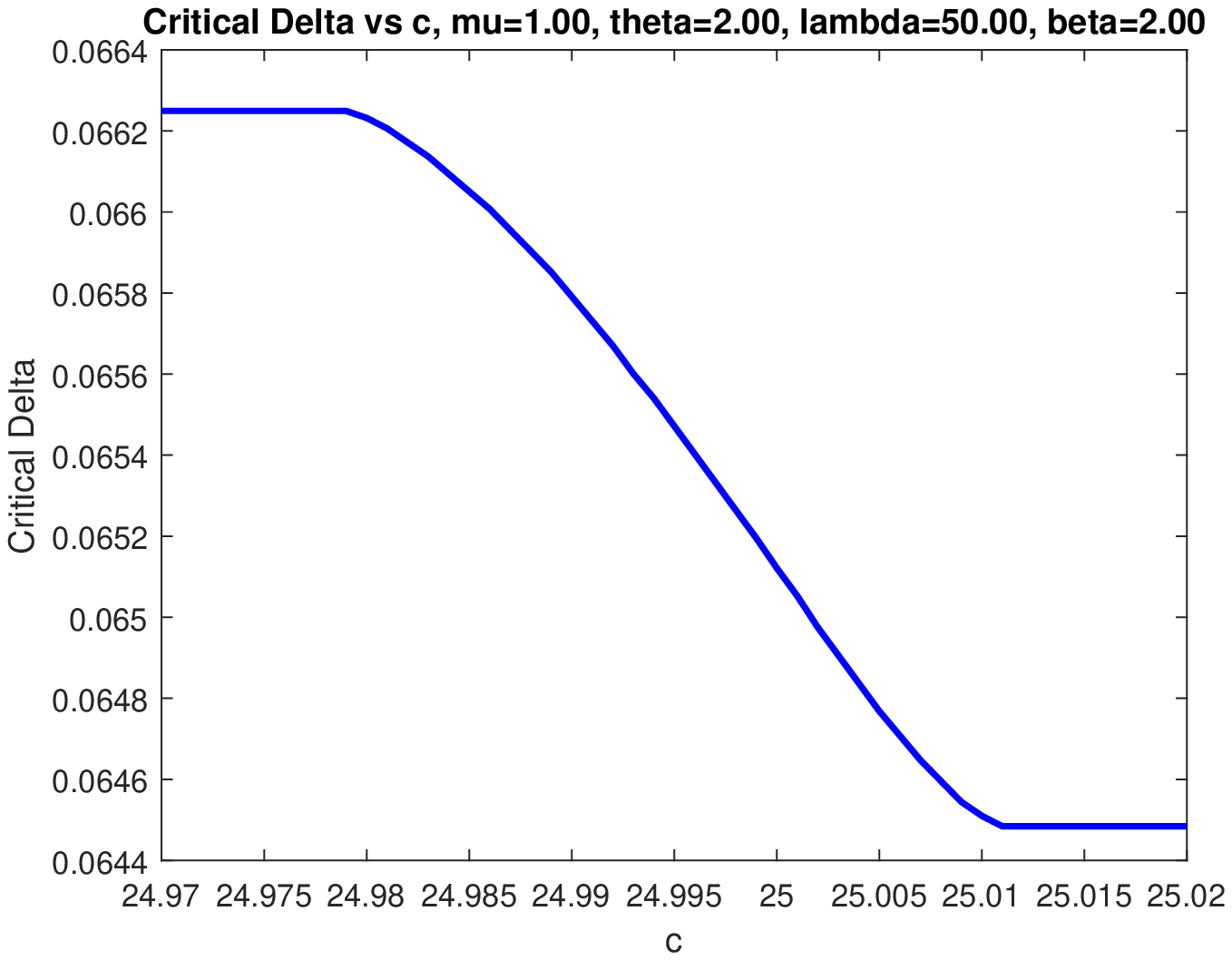}
\caption{How $\Delta_{\text{cr}}$ varies with $c$ and $f(x) = 1/(1 + \exp(\theta x))$ keeping the other parameters fixed with $\mu = 1, \beta = 2,$ and $\theta = 2$ for $\lambda = 10$ (Left), $\lambda = 20$ (Middle), and $\lambda = 50$ (Right). The parameter region is crossed when $c = \frac{\lambda f(0)}{\mu}$. The values of $\Delta_{\text{cr}}$ corresponding to the leftmost and rightmost values of $c$ plotted are $\Delta_{\text{cr}} = 0.4326$ and $\Delta_{\text{cr}} = 0.3618$ (Left), $\Delta_{\text{cr}} = 0.1809$ and $\Delta_{\text{cr}} = 0.1679$ (Middle), and $\Delta_{\text{cr}} = 0.0662$ and $\Delta_{\text{cr}} = 0.0645$ (Right).}
\label{fig5}
\end{figure}



\begin{figure}[ht!]
\hspace{-15mm}\includegraphics[scale=.45]{./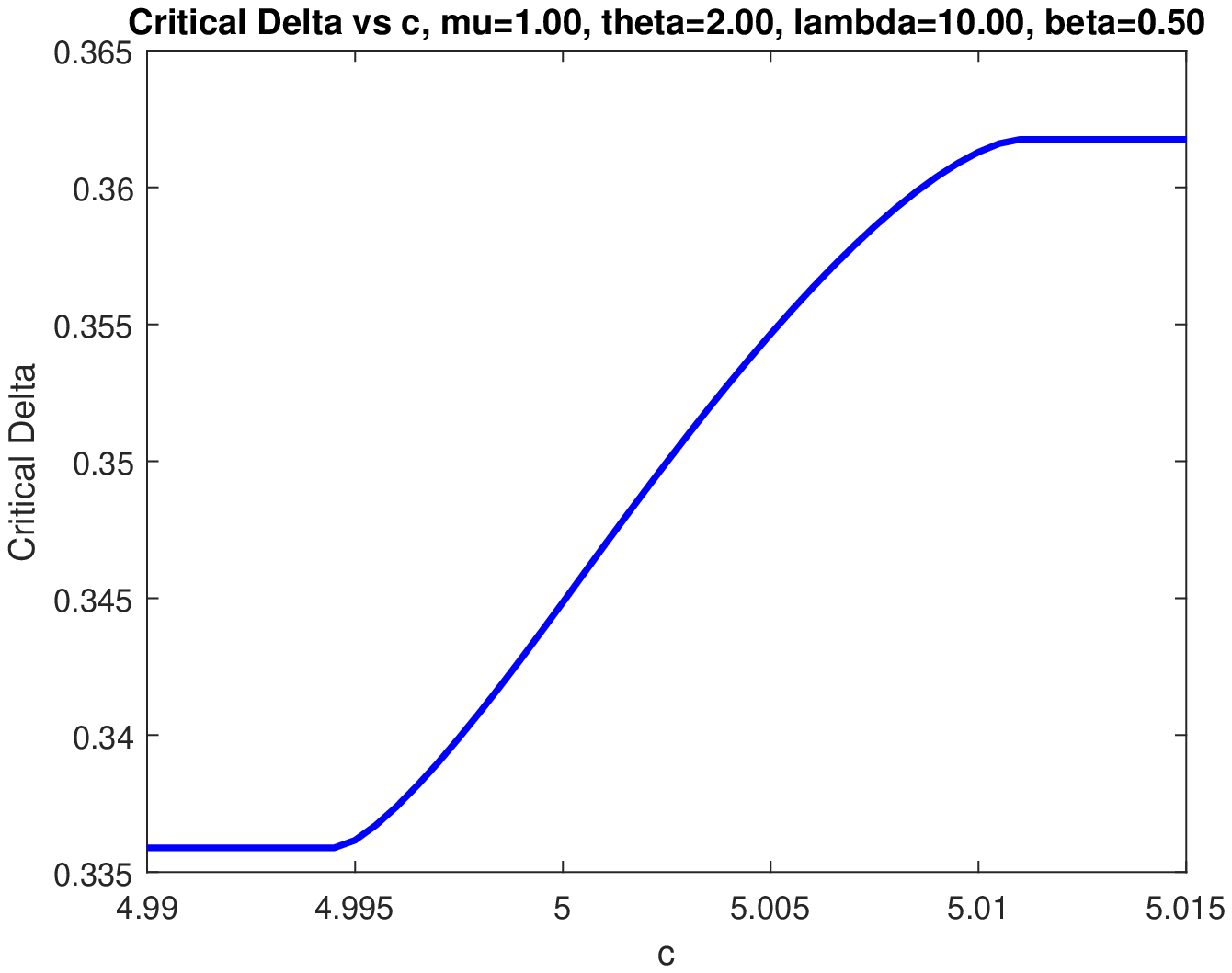}\includegraphics[scale=.45]{./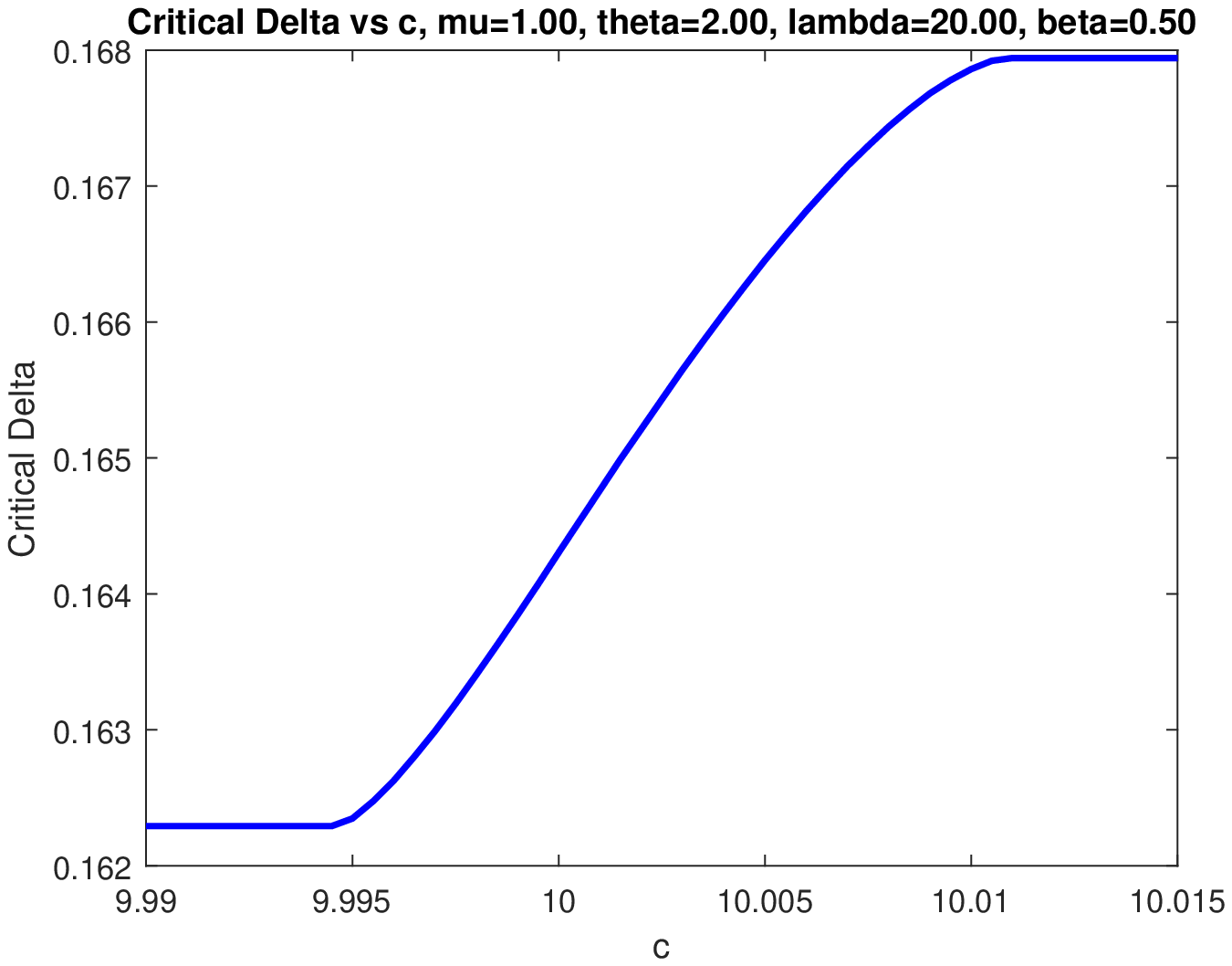}\includegraphics[scale=.45]{./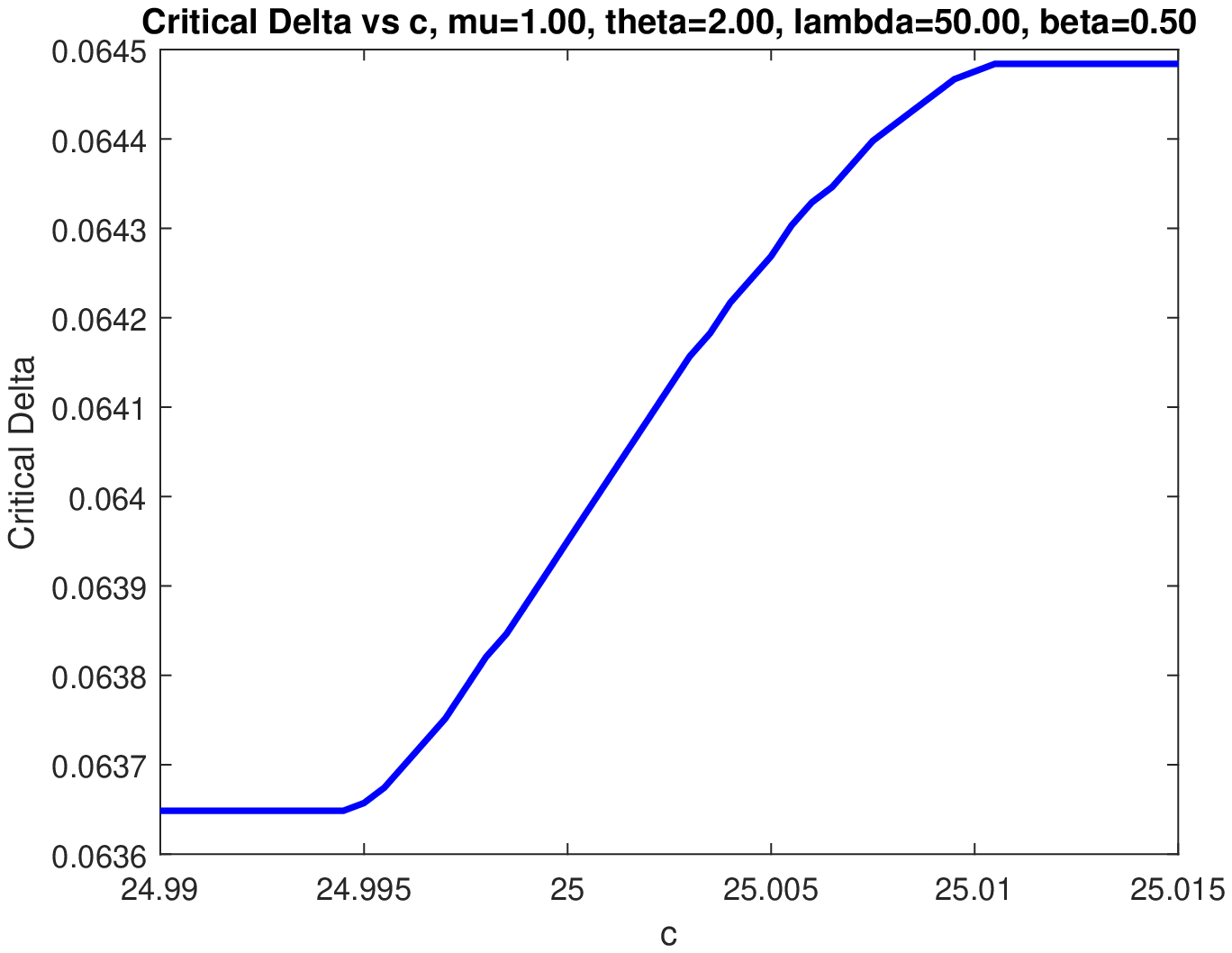}
\caption{How $\Delta_{\text{cr}}$ varies with $c$ and $f(x) = 1/(1 + \exp(\theta x))$ keeping the other parameters fixed with $\mu = 1, \beta = \frac{1}{2},$ and $\theta = 2$ for $\lambda = 10$ (Left), $\lambda = 20$ (Middle), and $\lambda = 50$ (Right). The parameter region is crossed when $c = \frac{\lambda f(0)}{\mu}$. The values of $\Delta_{\text{cr}}$ corresponding to the leftmost and rightmost values of $c$ plotted are $\Delta_{\text{cr}} = 0.3359$ and $\Delta_{\text{cr}} = 0.3618$ (Left), $\Delta_{\text{cr}} = 0.1623$ and $\Delta_{\text{cr}} = 0.1679$ (Middle), and $\Delta_{\text{cr}} = 0.0636$ and $\Delta_{\text{cr}} = 0.0645$ (Right).}
\label{fig6}
\end{figure}


\begin{figure}[ht!]
\hspace{-15mm}\includegraphics[scale=.33]{./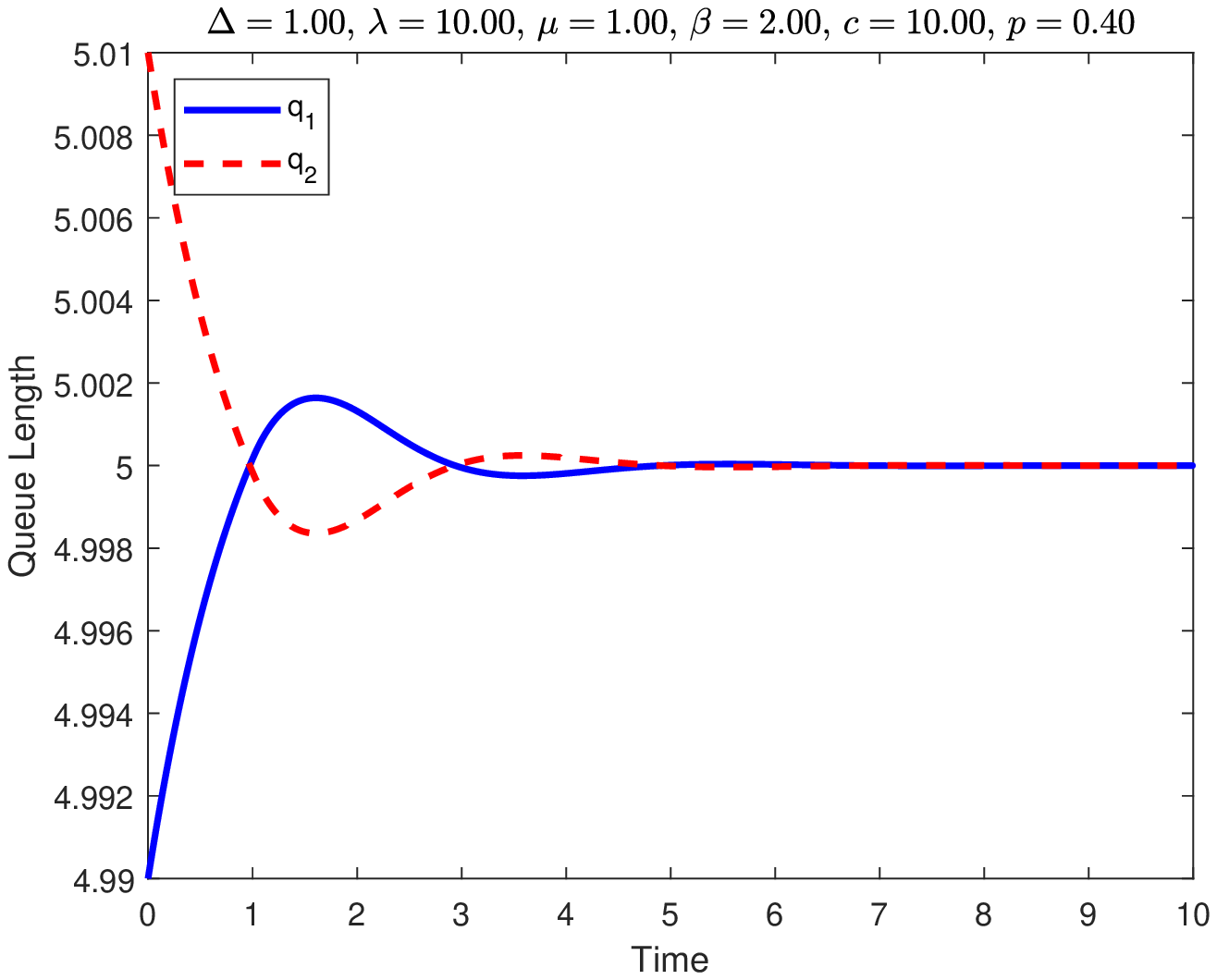}\includegraphics[scale=.33]{./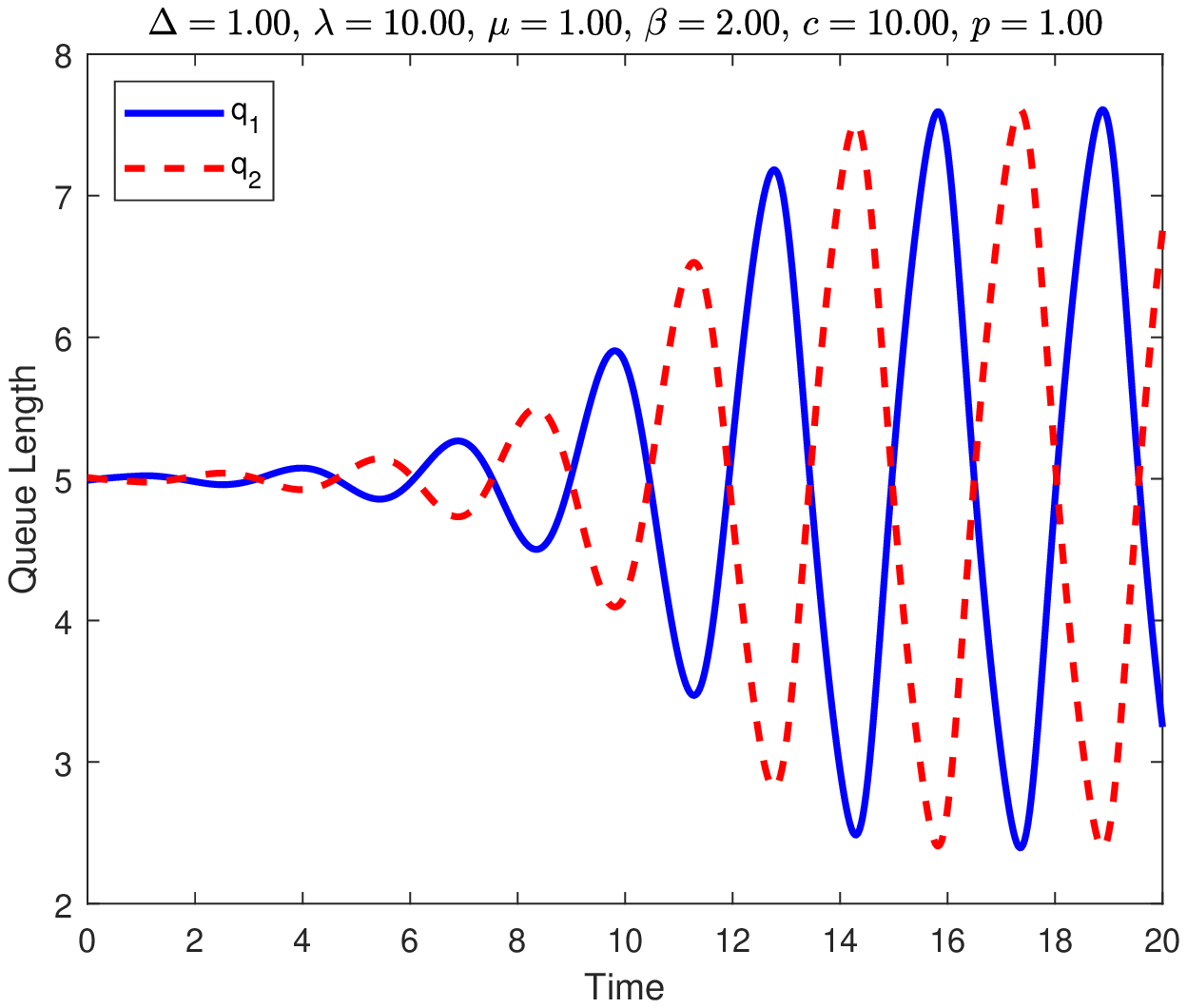}\includegraphics[scale=.33]{./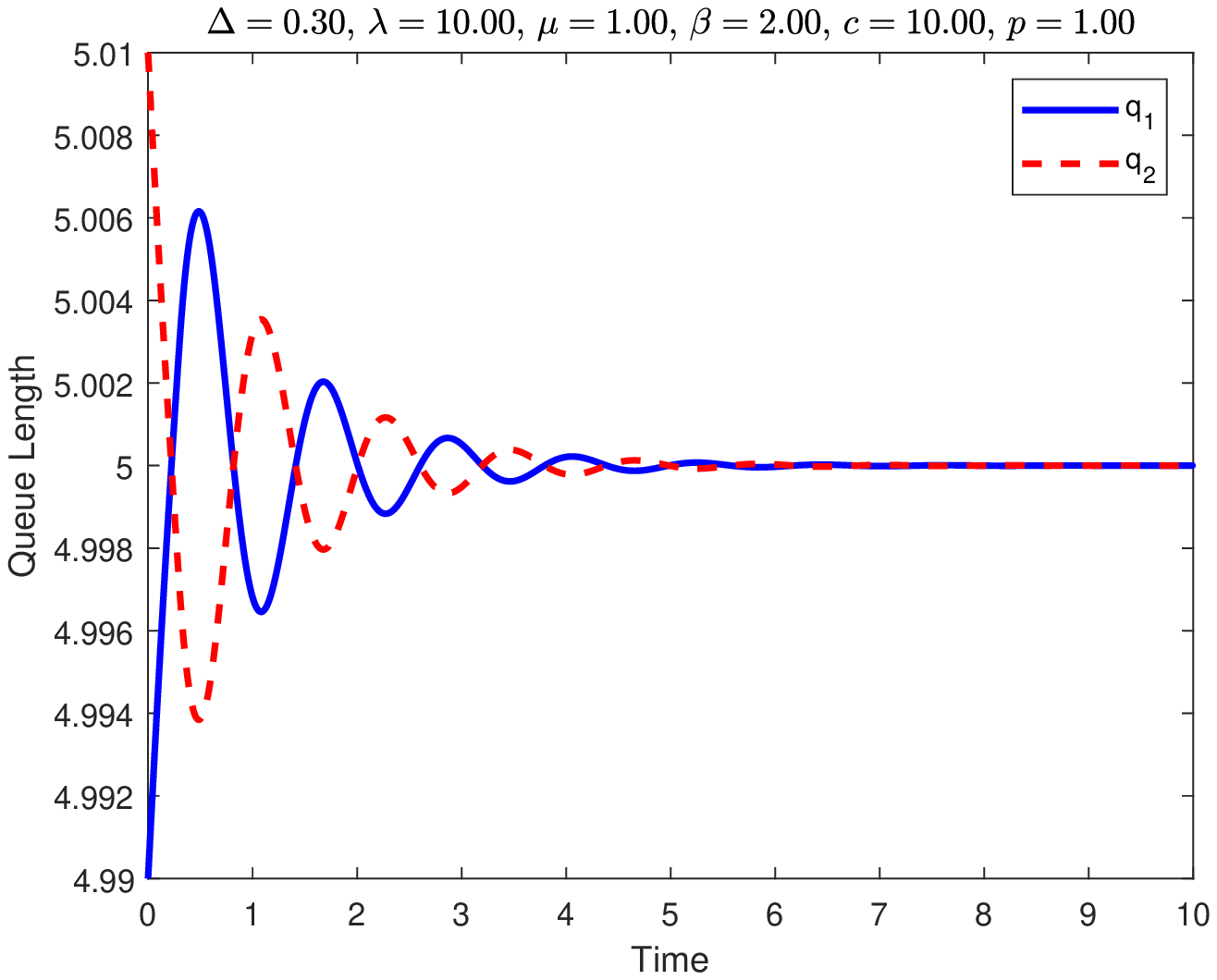}\includegraphics[scale=.33]{./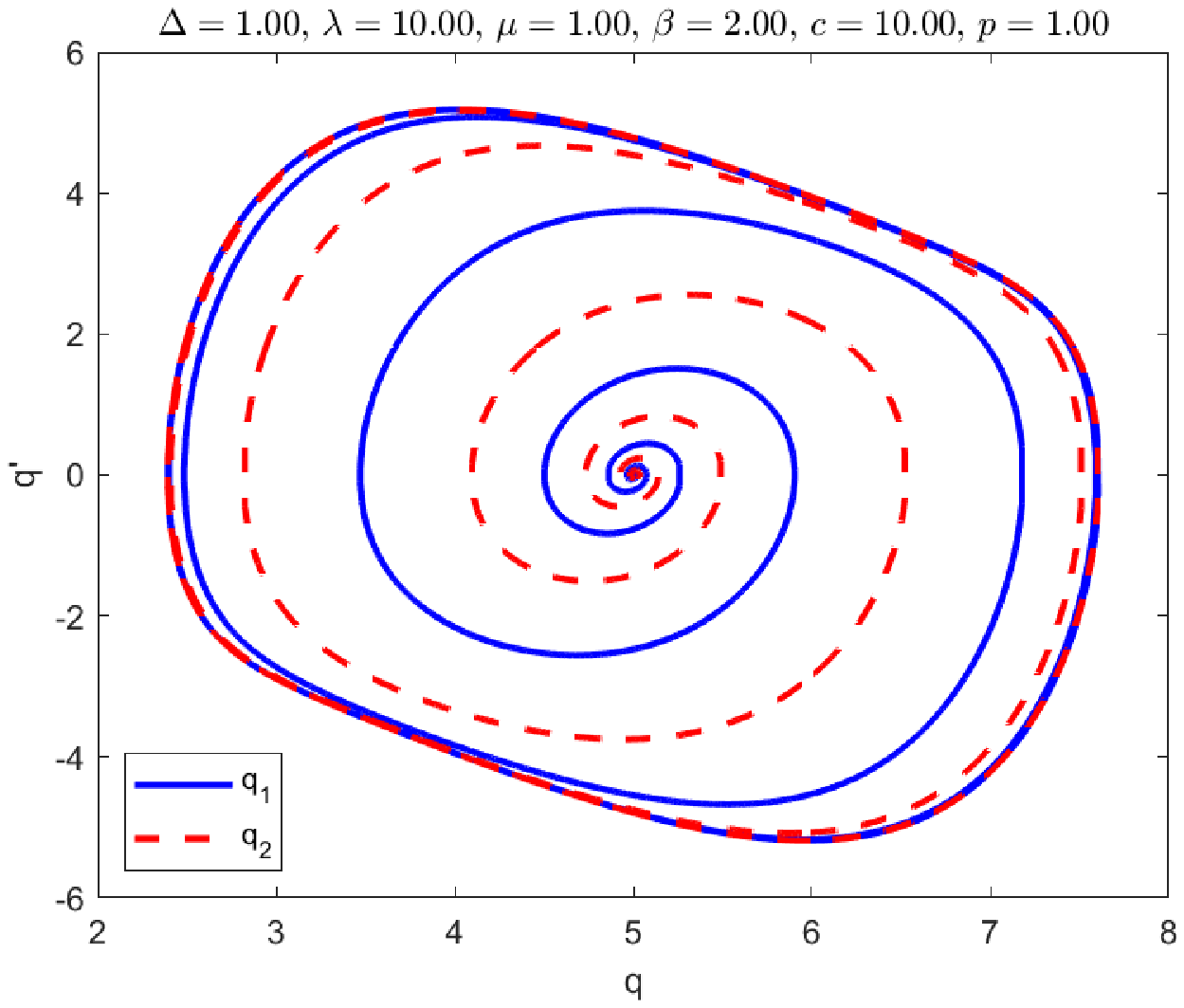}
\caption{Before and after the change in stability in the case where $\lambda f(0) < \mu c$ with constant history function on $[-\Delta, 0]$ with $q_1 = 4.99$ and $q_2 = 5.01$, $N = 2, \lambda = 10$, $\mu = 1$, $\beta=2$, $c=10$, $p = 1$, $f(x) = 1 - \int^{x}_{-\infty} \frac{1}{\sqrt{2\pi}} e^{-y^2/2}dy$. The left two plots are queue length versus time with $\Delta = .3$ (Left) and $\Delta = 1$ (Right). The right two plots are phase plots of the queue length derivative with respect to time against queue length for $\Delta = .3$ (Left) and $\Delta = 1$ (Right).}
\label{fig7}
\end{figure}

\begin{figure}[ht!]
\hspace{-15mm}\includegraphics[scale=.33]{./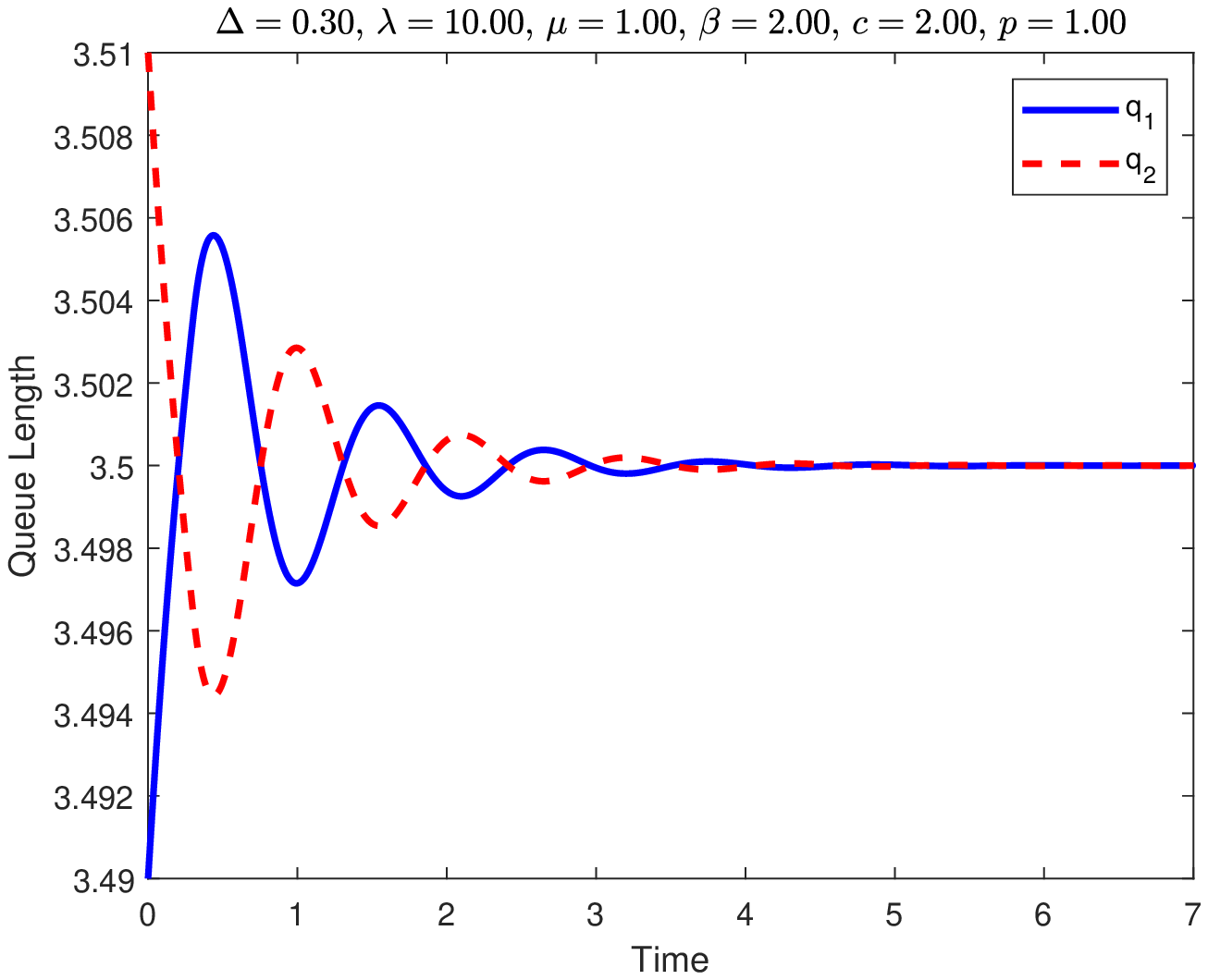}\includegraphics[scale=.33]{./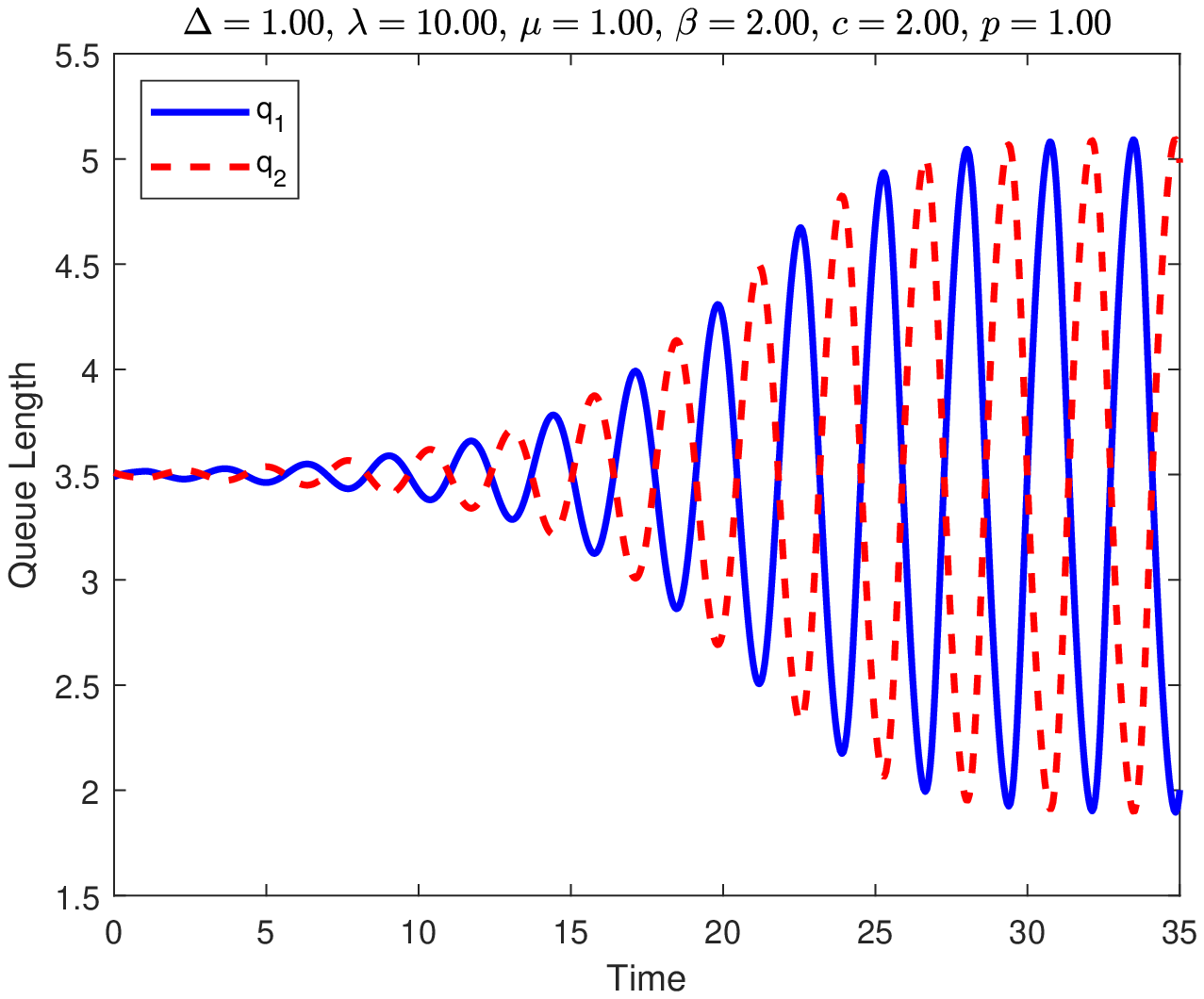}\includegraphics[scale=.33]{./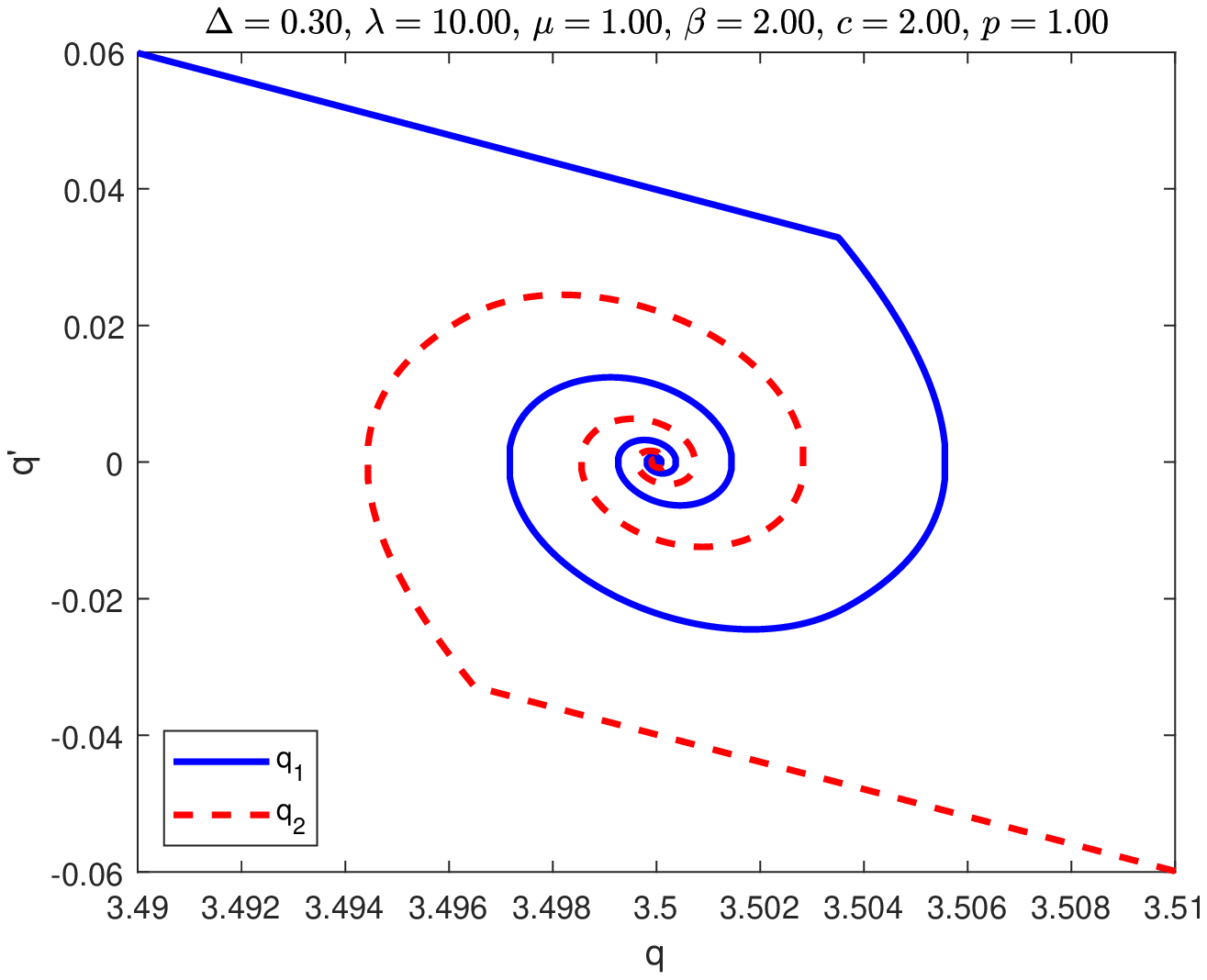}\includegraphics[scale=.33]{./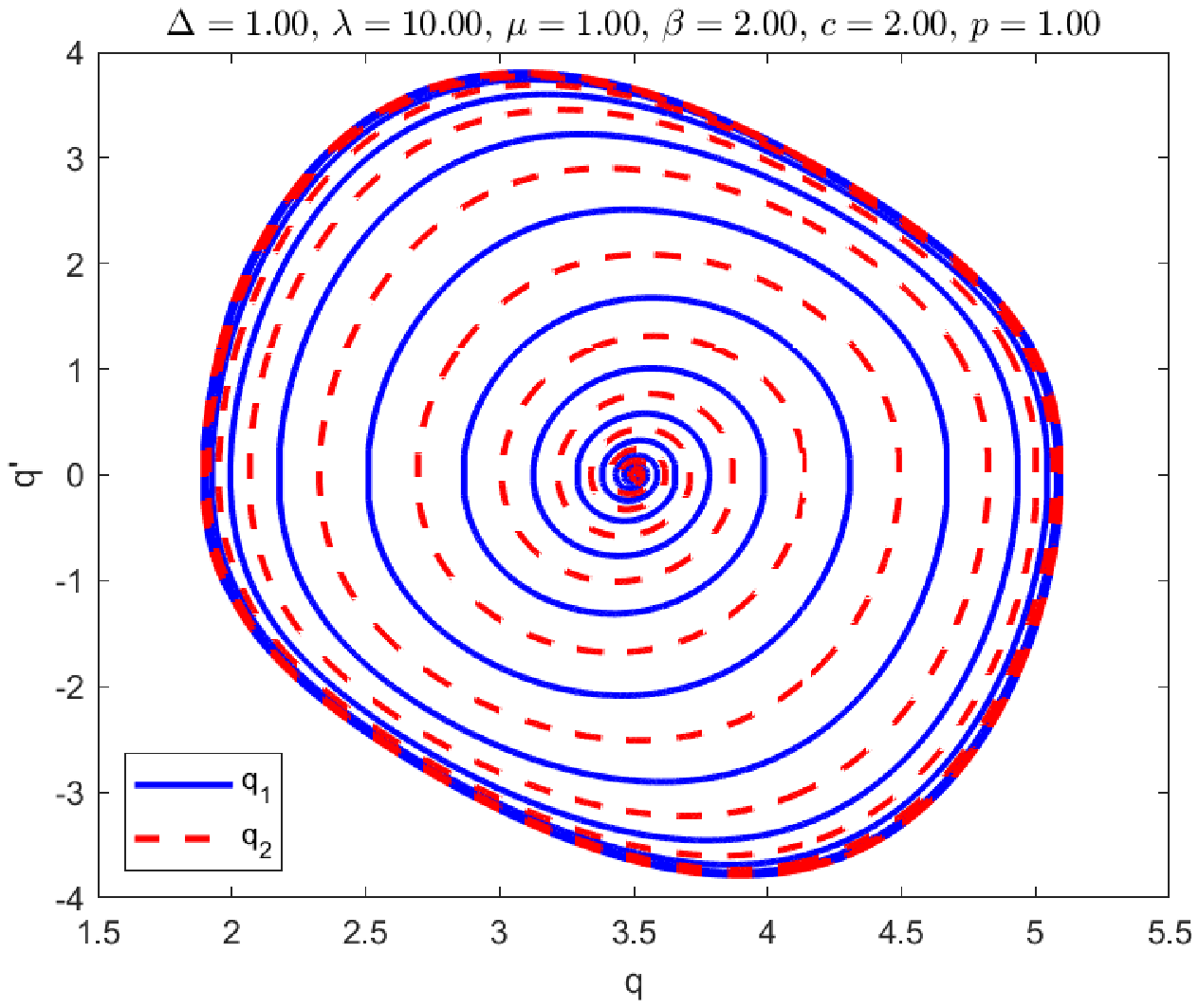}
\caption{Before and after the change in stability in the case where $\lambda f(0) > \mu c$ with constant history function on $[-\Delta, 0]$ with $q_1 = 4.99$ and $q_2 = 5.01$, $N = 2, \lambda = 10$, $\mu = 1$, $\beta=2$, $c=2$, $p = 1$, $f(x) =  \int^{\infty}_{x} \frac{1}{\sqrt{2\pi}} e^{-y^2/2}dy$. The left two plots are queue length versus time with $\Delta = .3$ (Left) and $\Delta = 1$ (Right). The right two plots are phase plots of the queue length derivative with respect to time against queue length for $\Delta = .3$ (Left) and $\Delta = 1$ (Right).}
\label{fig8}
\end{figure}

\begin{figure}[ht!]
\hspace{-15mm}\includegraphics[scale=.33]{./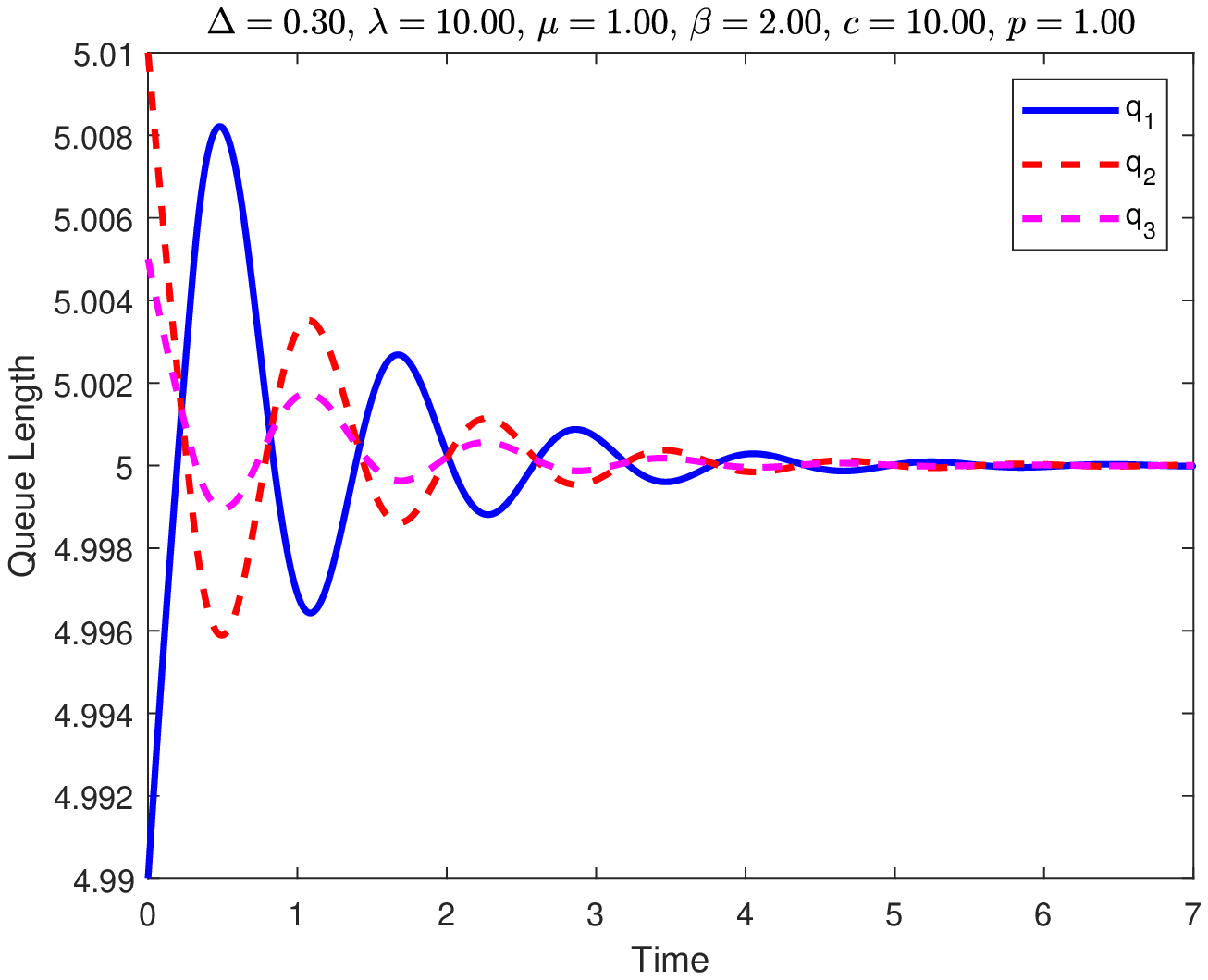}\includegraphics[scale=.33]{./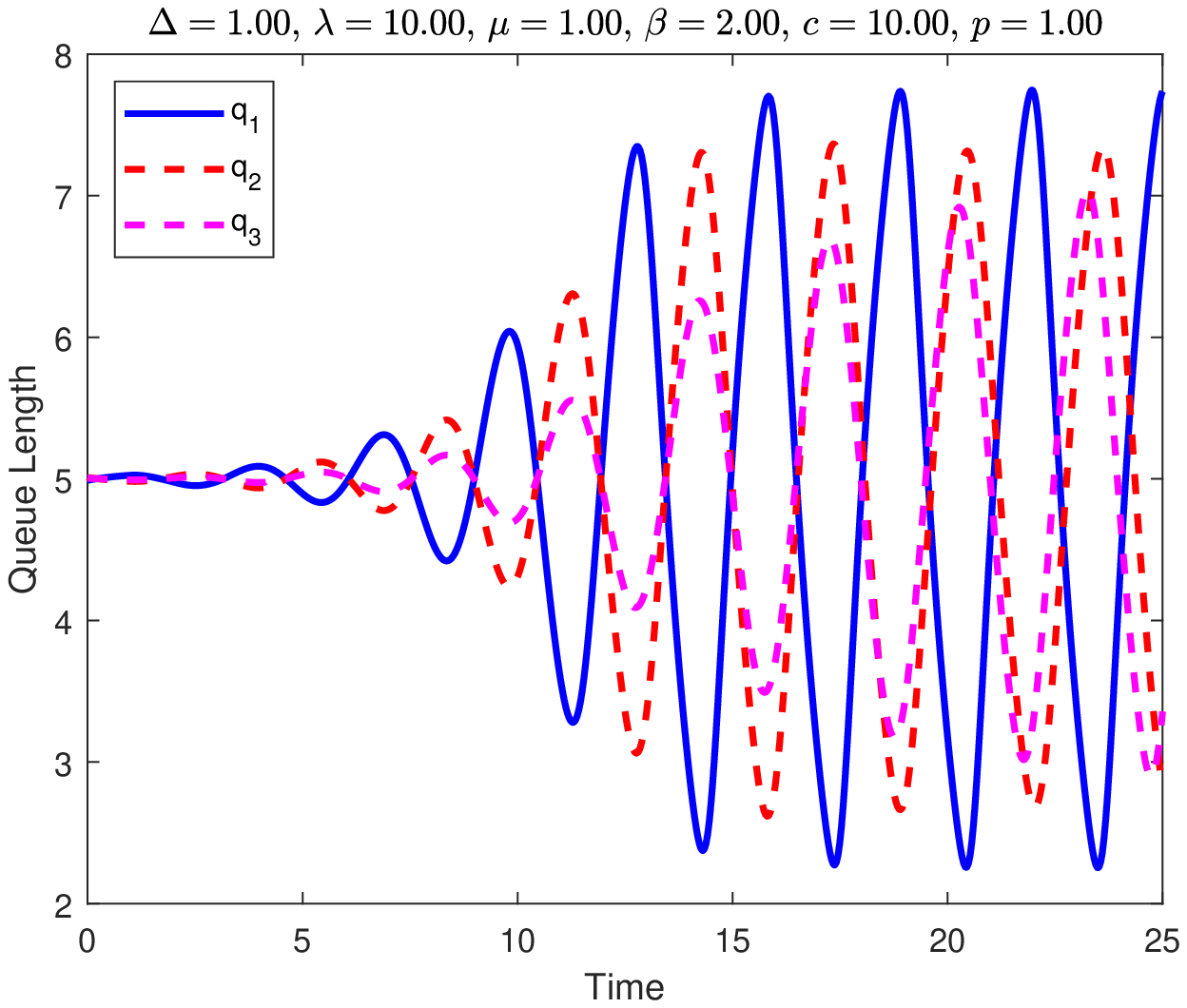}\includegraphics[scale=.33]{./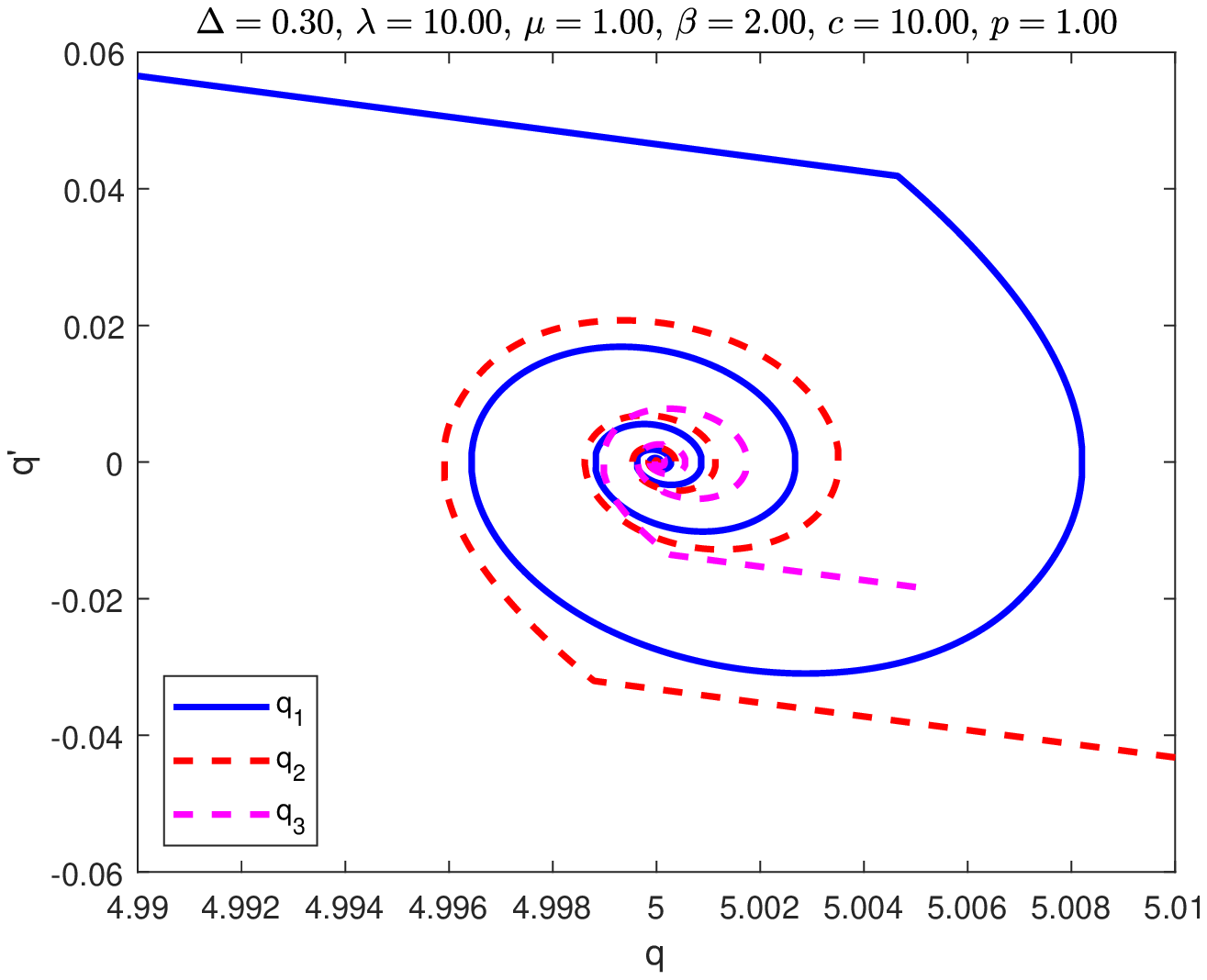}\includegraphics[scale=.33]{./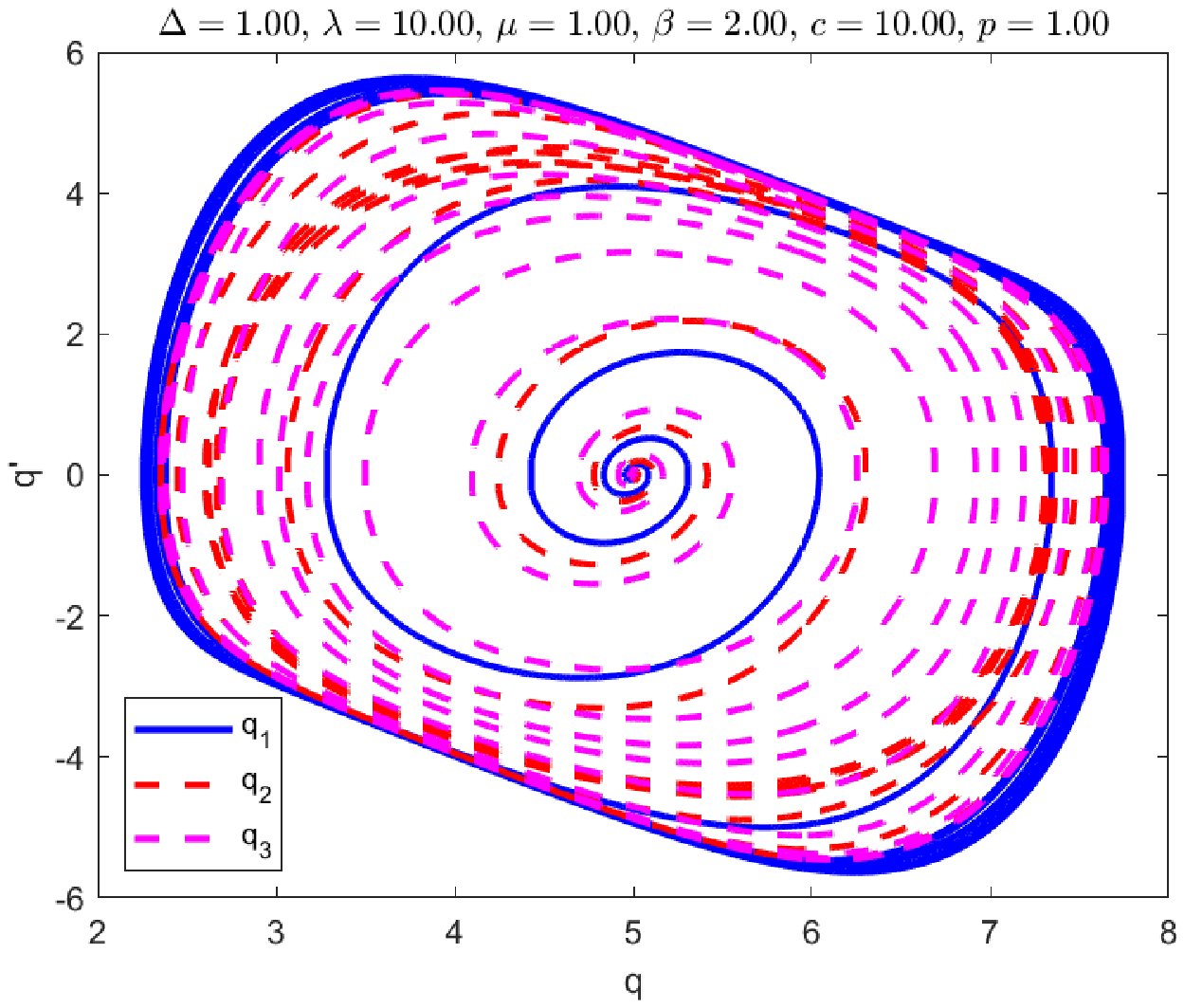}
\caption{Before and after the change in stability in the case where $\lambda f(0) < \mu c$ with constant history function on $[-\Delta, 0]$ with $q_1 = 4.99$, $q_2 = 5.01$, and $q_3 = 5.005$, $N = 3, \lambda = 10$, $\mu = 1$, $\beta=2$, $c=10$, $p = 1$, $f(x) = \int^{\infty}_{x} \frac{1}{\sqrt{2\pi}} e^{-y^2/2}dy$. The left two plots are queue length versus time with $\Delta = .3$ (Left) and $\Delta = 1$ (Right). The right two plots are phase plots of the queue length derivative with respect to time against queue length for $\Delta = .3$ (Left) and $\Delta = 1$ (Right).}
\label{fig9}
\end{figure}

\begin{figure}[ht!]
\hspace{-15mm}\includegraphics[scale=.33]{./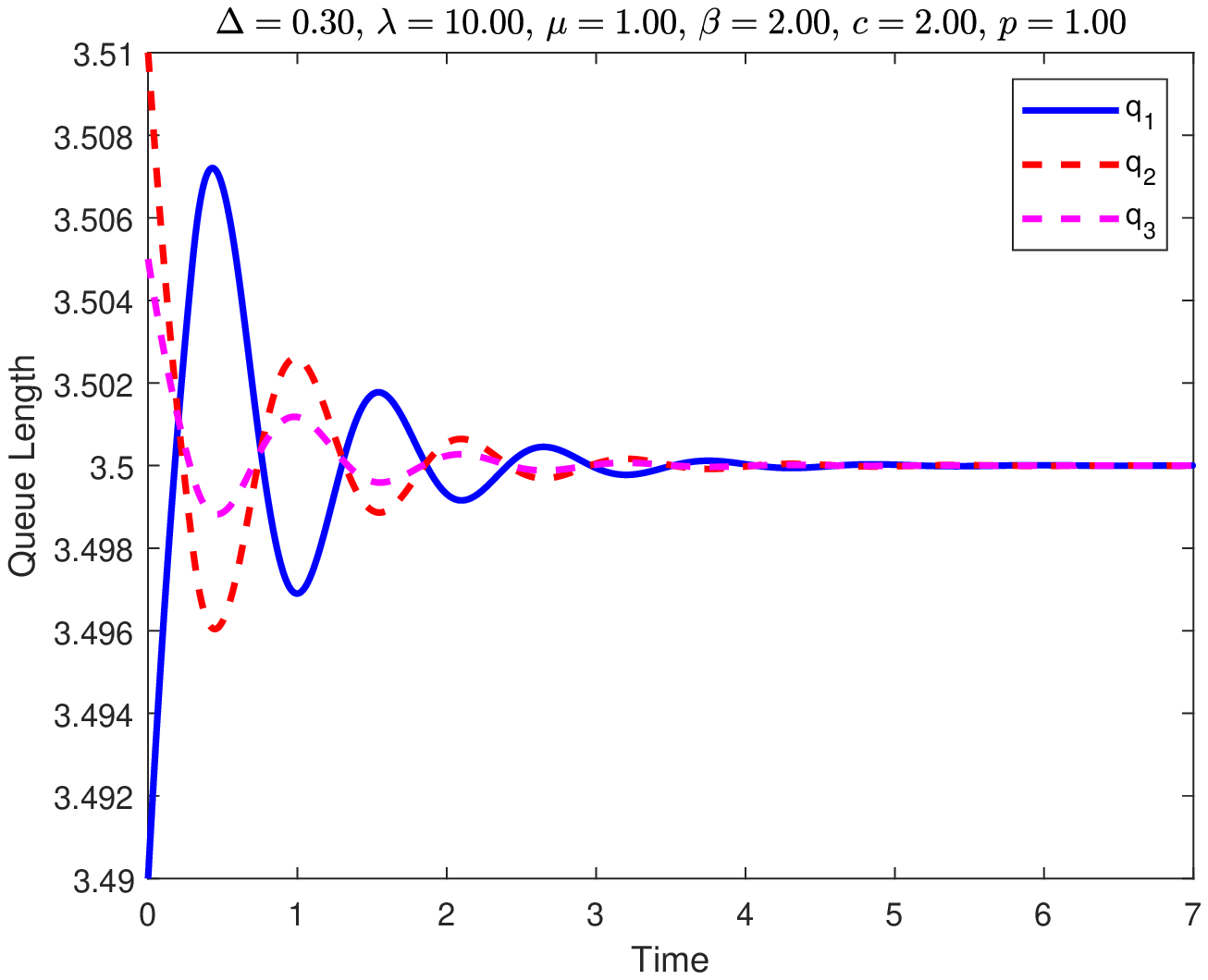}\includegraphics[scale=.33]{./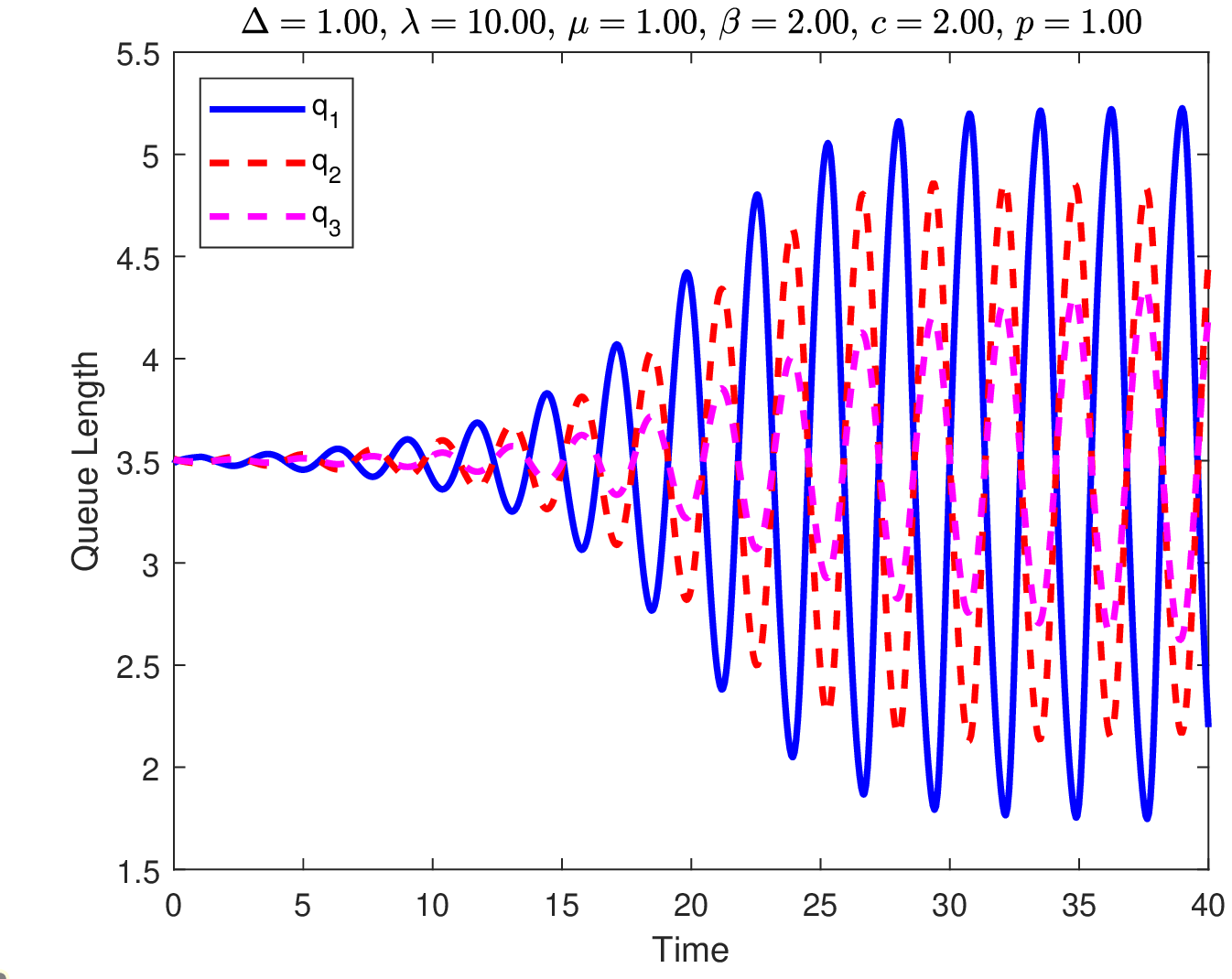}\includegraphics[scale=.33]{./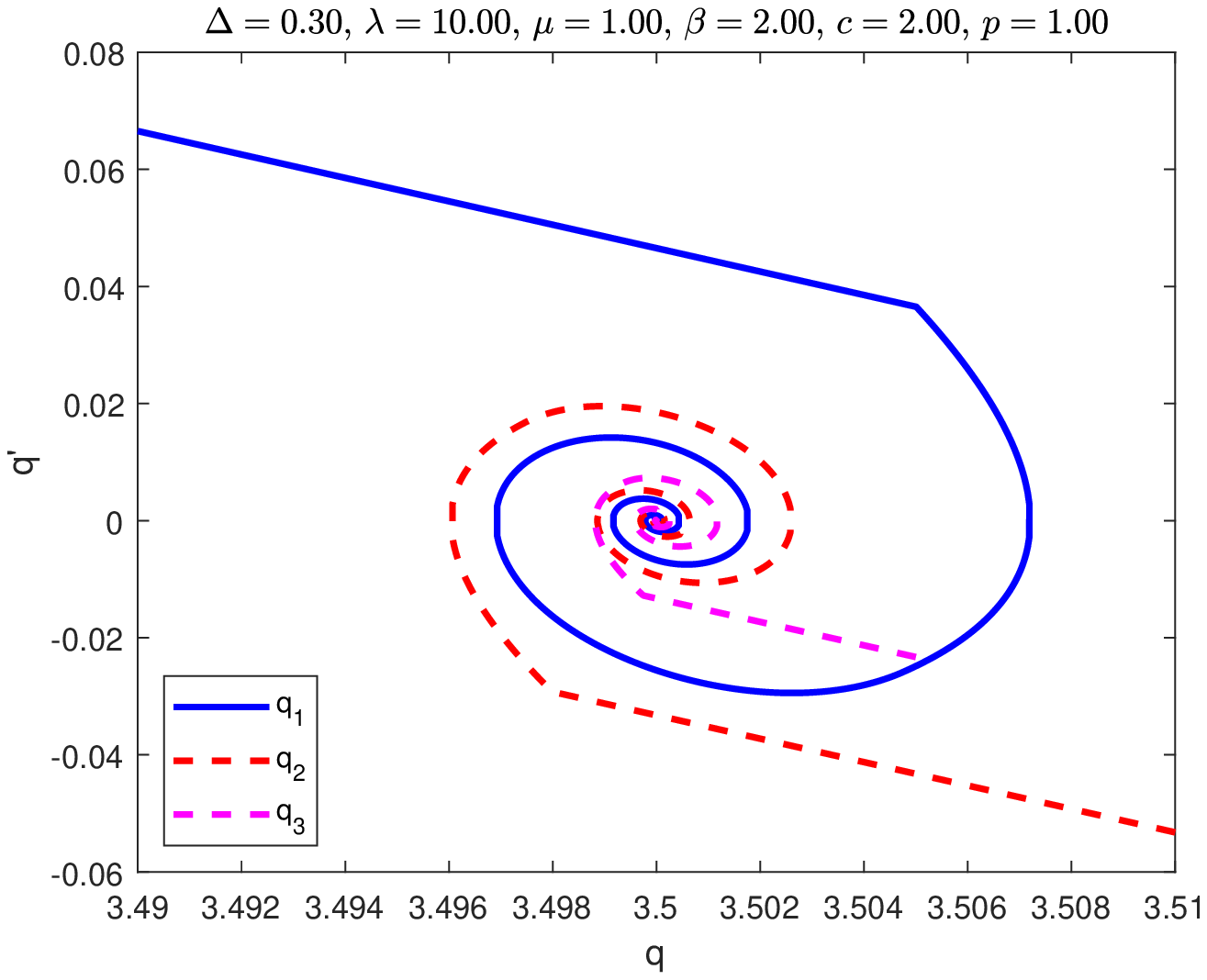}\includegraphics[scale=.33]{./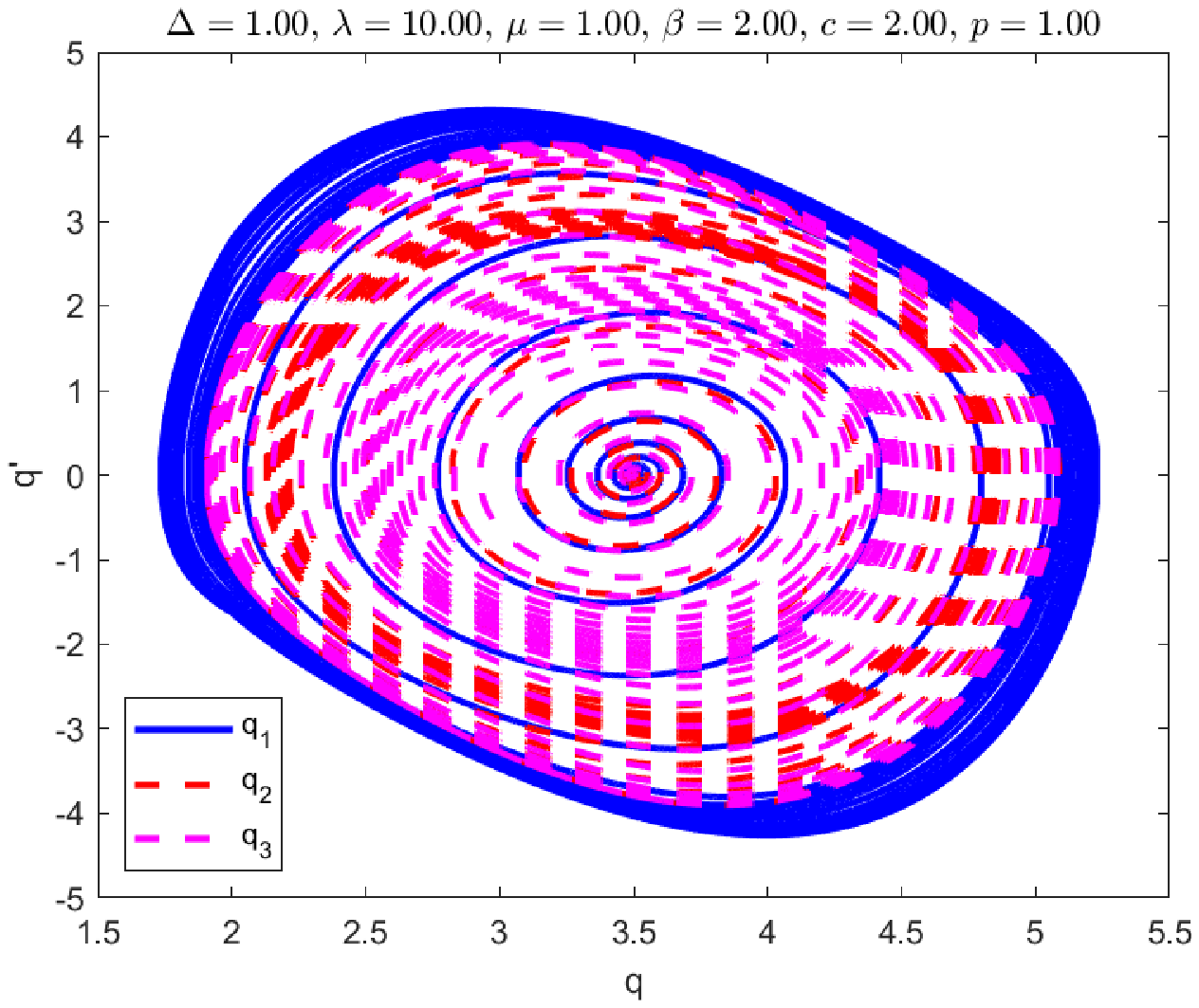}
\caption{Before and after the change in stability in the case where $\lambda f(0) > \mu c$ with constant history function on $[-\Delta, 0]$ with $q_1 = 4.99$, $q_2 = 5.01$, and $q_3 = 5.005$, $N = 3, \lambda = 10$, $\mu = 1$, $\beta=2$, $c=2$, $p = 1$, $f(x) = \int^{\infty}_{x} \frac{1}{\sqrt{2\pi}} e^{-y^2/2}dy$. The left two plots are queue length versus time with $\Delta = .3$ (Left) and $\Delta = 1$ (Right). The right two plots are phase plots of the queue length derivative with respect to time against queue length for $\Delta = .3$ (Left) and $\Delta = 1$ (Right).}
\label{fig10}
\end{figure}

\begin{figure}[ht!]
\hspace{-15mm}\includegraphics[scale=.45]{./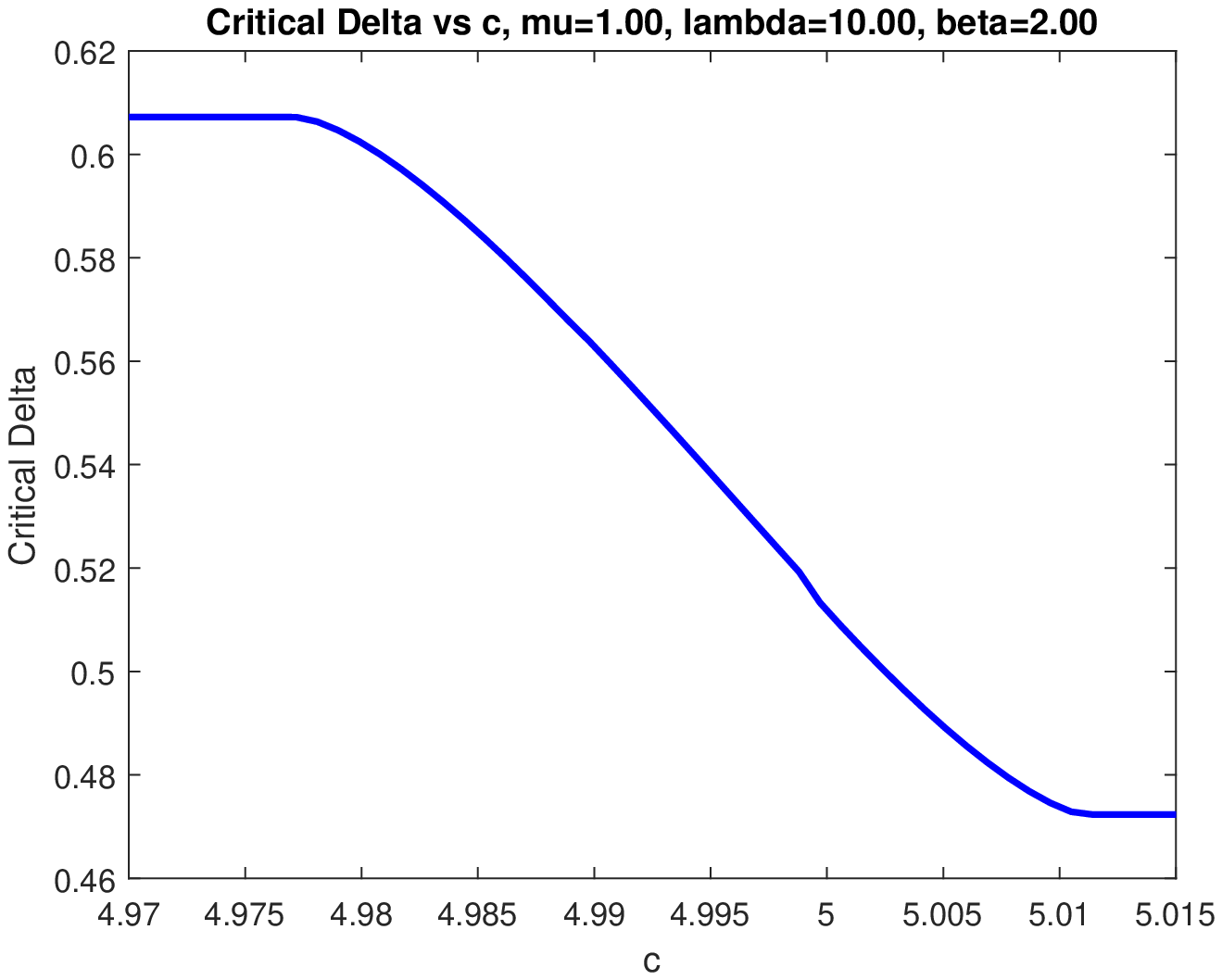}\includegraphics[scale=.45]{./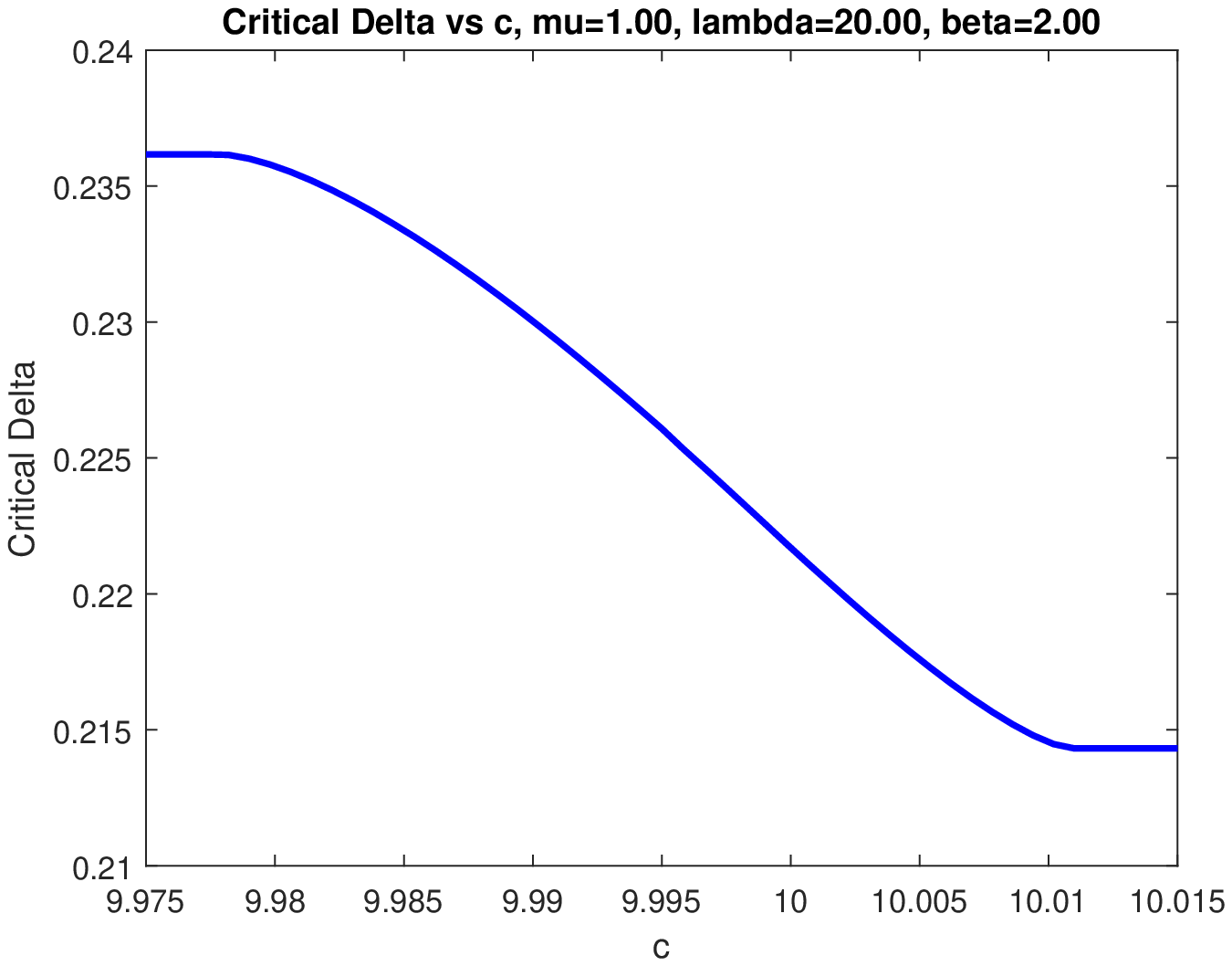}\includegraphics[scale=.45]{./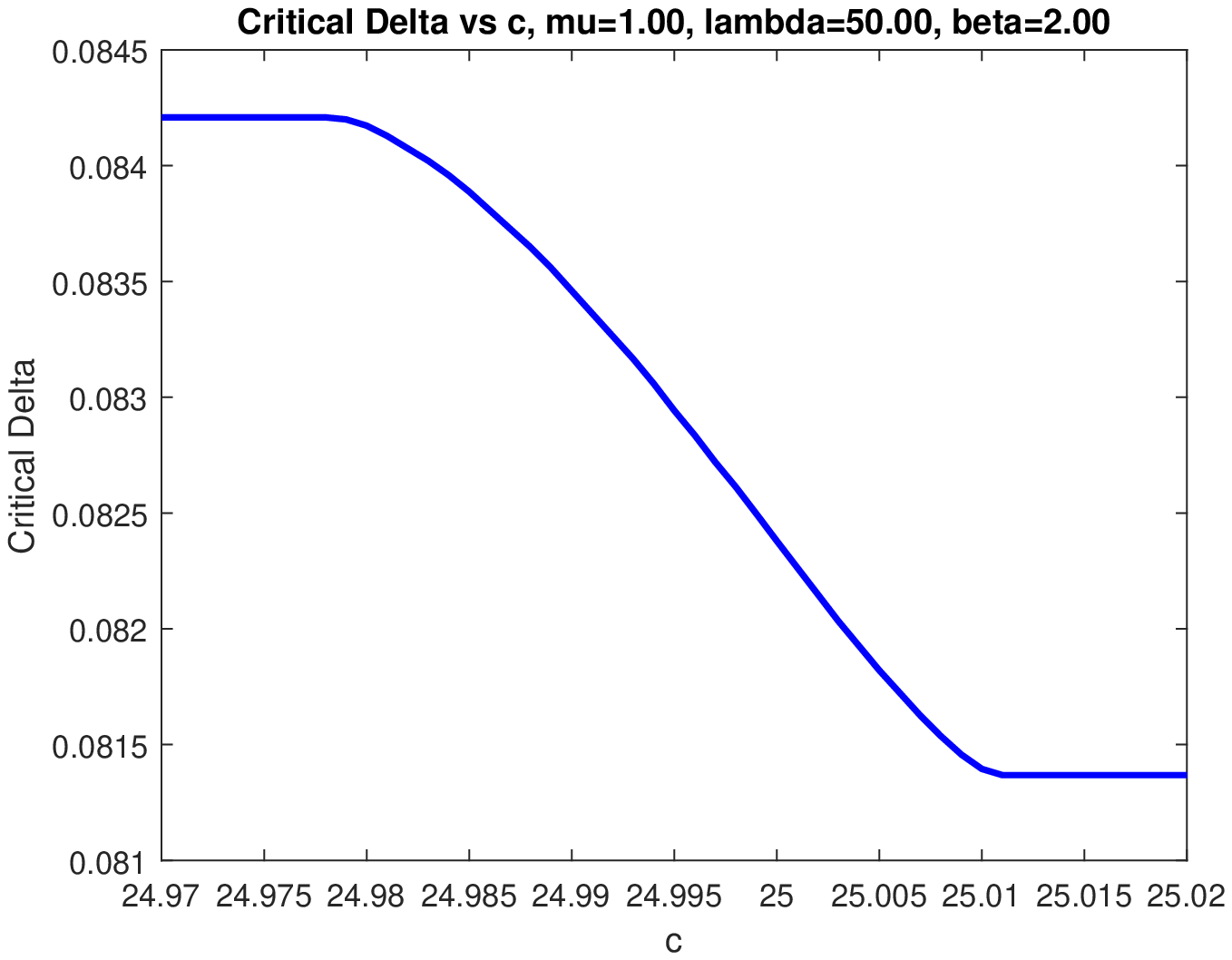}
\caption{How $\Delta_{\text{cr}}$ varies with $c$ and $f(x) = \int^{\infty}_{x} \frac{1}{\sqrt{2\pi}} e^{-y^2/2}dy$ keeping the other parameters fixed with $\mu = 1, \beta = 2,$ and $\theta = 2$ for $\lambda = 10$ (Left), $\lambda = 20$ (Middle), and $\lambda = 50$ (Right). The parameter region is crossed when $c = \frac{\lambda f(0)}{\mu}$. The values of $\Delta_{\text{cr}}$ corresponding to the leftmost and rightmost values of $c$ plotted are $\Delta_{\text{cr}} = 0.6072$ and $\Delta_{\text{cr}} = 0.4723$ (Left), $\Delta_{\text{cr}} = 0.2362$ and $\Delta_{\text{cr}} = 0.2143$ (Middle), and $\Delta_{\text{cr}} = 0.0842$ and $\Delta_{\text{cr}} = 0.0814$ (Right).}
\label{fig11}
\end{figure}

\begin{figure}[ht!]
\hspace{-15mm}\includegraphics[scale=.45]{./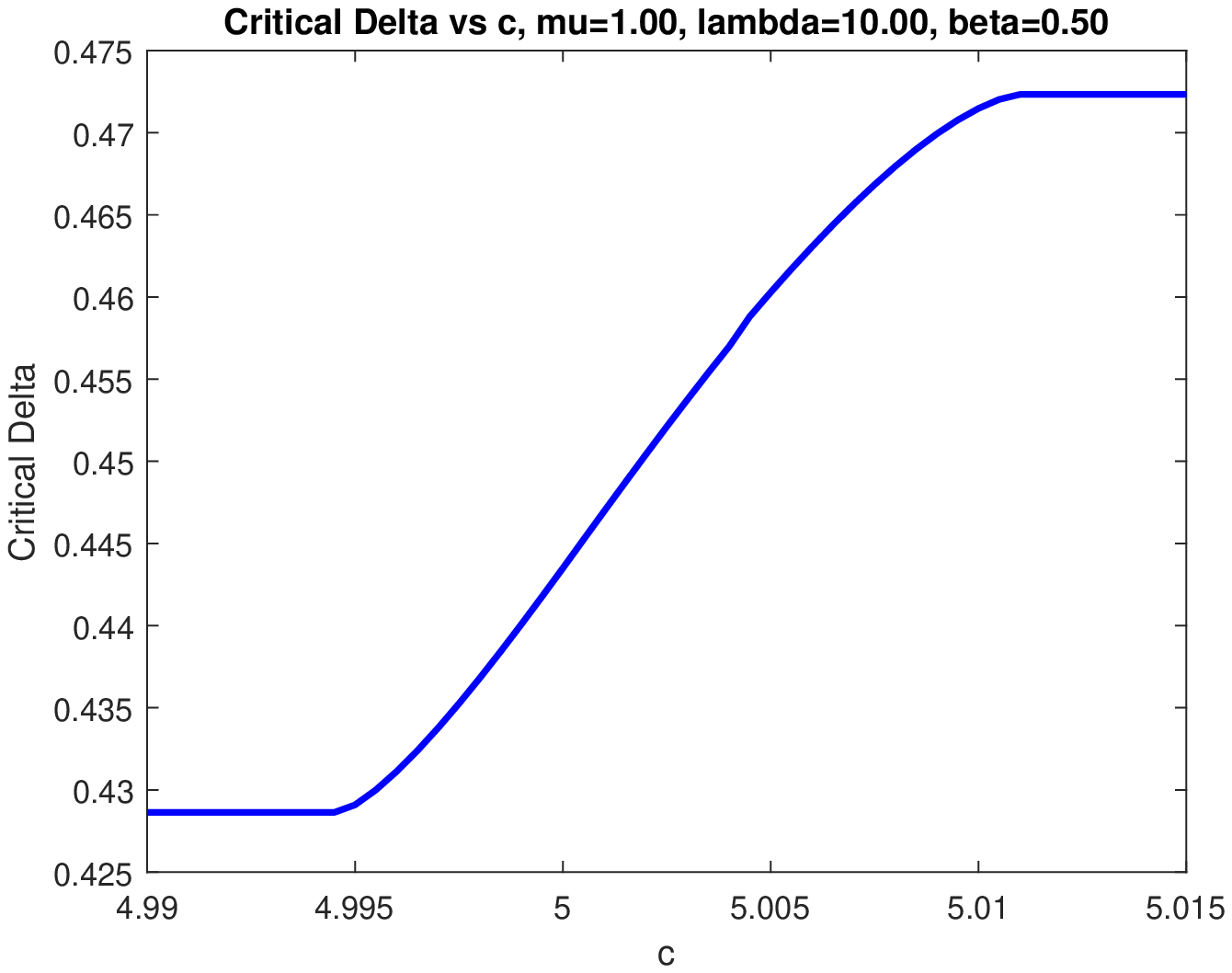}\includegraphics[scale=.45]{./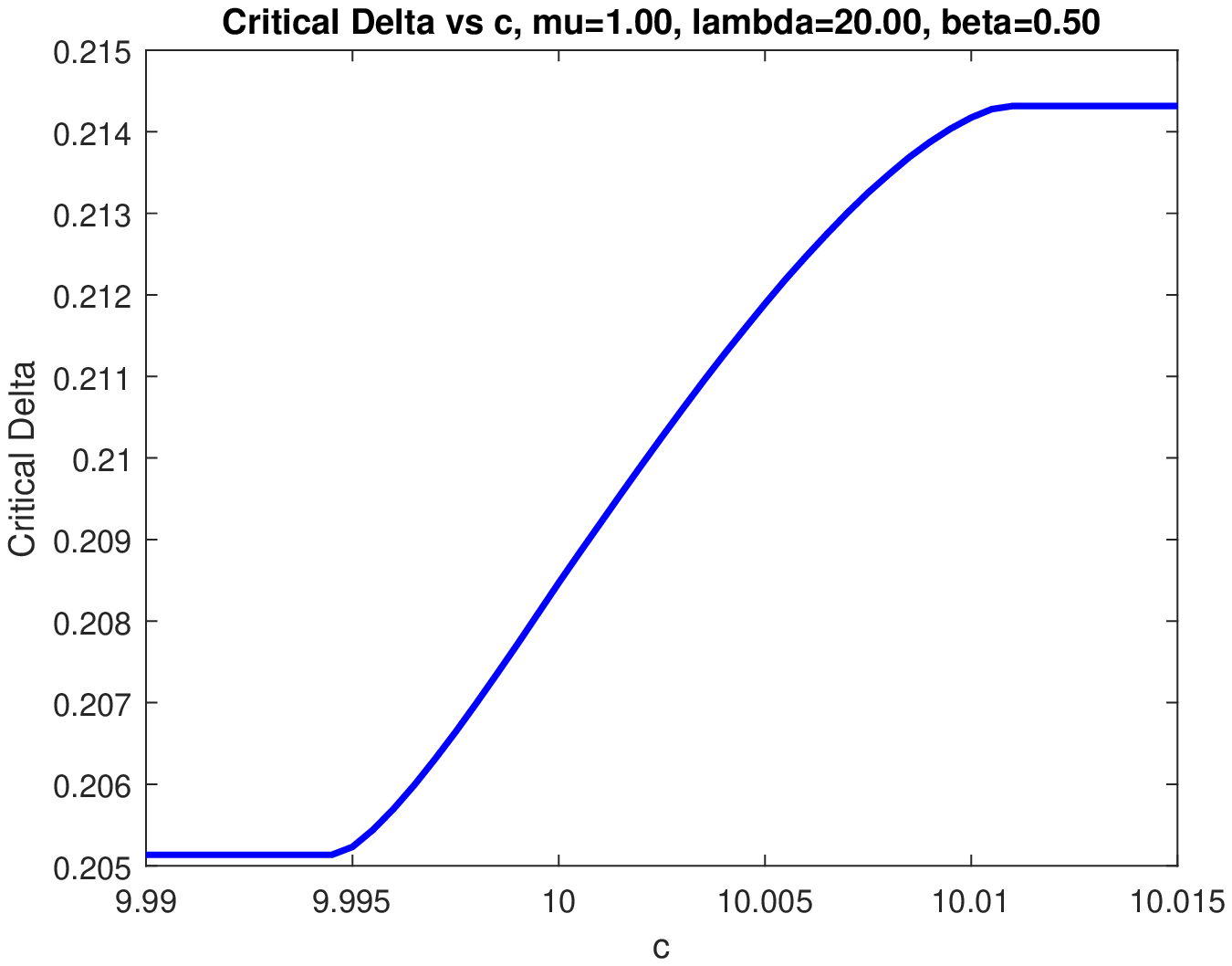}\includegraphics[scale=.45]{./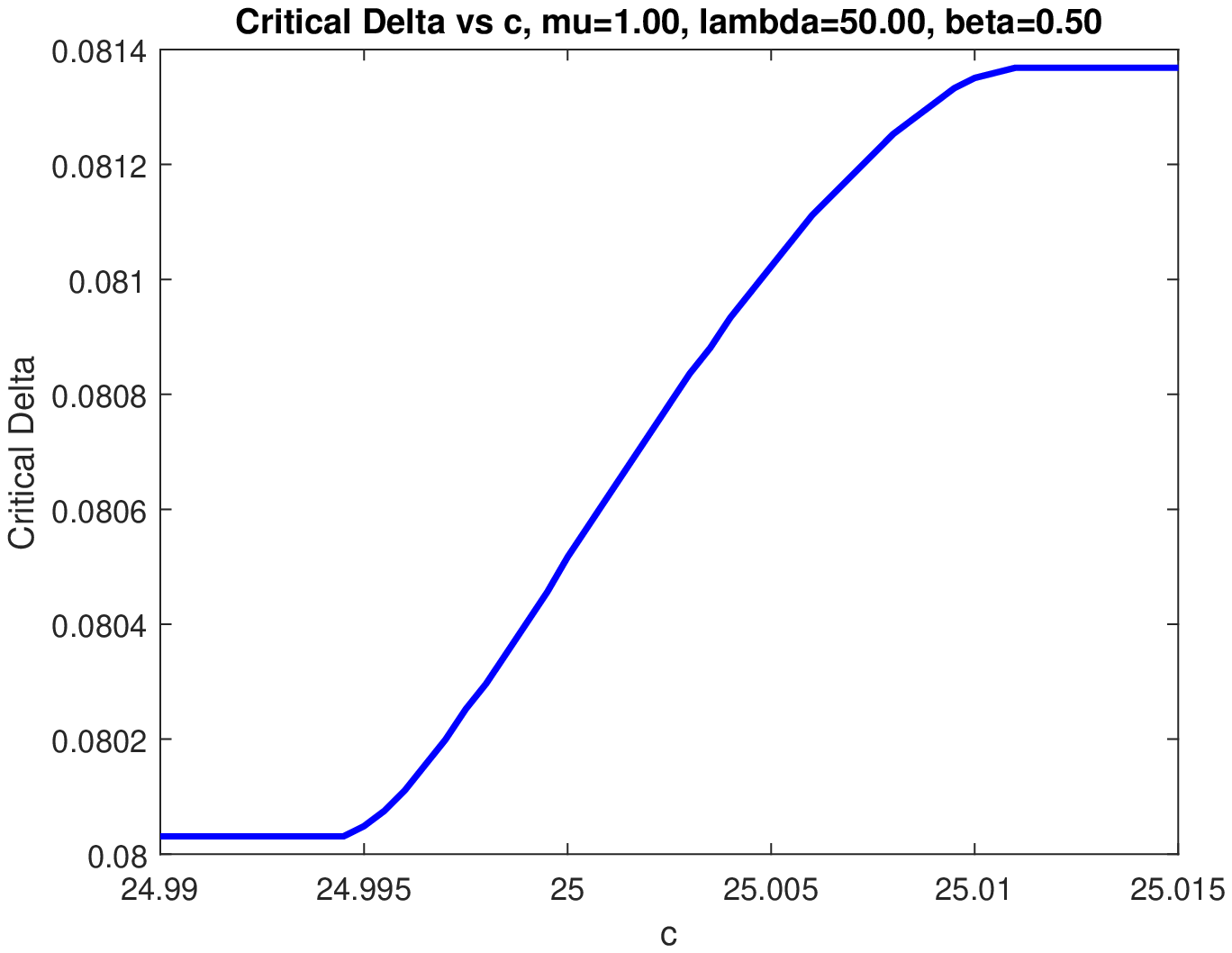}
\caption{How $\Delta_{\text{cr}}$ varies with $c$ and $f(x) = \int^{\infty}_{x} \frac{1}{\sqrt{2\pi}} e^{-y^2/2}dy$ keeping the other parameters fixed with $\mu = 1, \beta = \frac{1}{2},$ and $\theta = 2$ for $\lambda = 10$ (Left), $\lambda = 20$ (Middle), and $\lambda = 50$ (Right). The parameter region is crossed when $c = \frac{\lambda f(0)}{\mu}$. The values of $\Delta_{\text{cr}}$ corresponding to the leftmost and rightmost values of $c$ plotted are $\Delta_{\text{cr}} = 0.4286$ and $\Delta_{\text{cr}} = 0.4723$ (Left), $\Delta_{\text{cr}} = 0.2051$ and $\Delta_{\text{cr}} = 0.2143$ (Middle), and $\Delta_{\text{cr}} = 0.0800$ and $\Delta_{\text{cr}} = 0.0814$ (Right).}
\label{fig12}
\end{figure}



\section{Conclusion} \label{conclusion_section}

In this paper, we analyze the stability of a new queueing model where the queues are balanced via a mean field interaction with a time delay. From our model, we obtain a system of $N$ delay differential equations for a system of $N$ queues. Additionally, our queueing model allows for customer abandonment which introduces a point of non-differentiability that splits the parameter space into two regions. We derive an exact expression for the critical delay of this system in each of the two parameter regions where performing a linearization analysis is valid and we consider numerically what happens to the value of the critical delay as the parameters are varied to cross between the two regions. 

There are several problems that remain for future research. For example, it would be interesting to compute the amplitude of the oscillations that are present when the delay is larger than the critical delay.  The calculation of the amplitude would be useful for quantifying how detrimental the oscillations are to the efficiency of the queueing network. Approximating the amplitude of the oscillations near the critical delay via perturbation methods is a possible approach \citet{novitzky2019nonlinear}. Additionally, it may be interesting to consider other types of information such as the update model of \citet{novitzky2020update}.

\section{Acknowledgements}

We would like to thank the Center for Applied Mathematics at Cornell University for sponsoring Philip Doldo’s research. Finally, we acknowledge the gracious support of the National Science Foundation (NSF) for Jamol Penders Career Award CMMI \# 1751975.

\bibliographystyle{plainnat}
\bibliography{main}

\begin{thebibliography}{46}
\providecommand{\natexlab}[1]{#1}
\providecommand{\url}[1]{\texttt{#1}}
\expandafter\ifx\csname urlstyle\endcsname\relax
  \providecommand{\doi}[1]{doi: #1}\else
  \providecommand{\doi}{doi: \begingroup \urlstyle{rm}\Url}\fi

\bibitem[Atar and Lipshutz(2021)]{Lipshutz2018}
Rami Atar and David Lipshutz.
\newblock Heavy traffic limits for join-the-shortest-estimated-queue policy
  using delayed information.
\newblock \emph{Mathematics of Operations Research}, 46\penalty0 (1):\penalty0
  268--300, 2021.

\bibitem[Bobbio et~al.(2008)Bobbio, Gribaudo, and Telek]{bobbio2008analysis}
Andrea Bobbio, Marco Gribaudo, and Mikl{\'o}s Telek.
\newblock Analysis of large scale interacting systems by mean field method.
\newblock In \emph{2008 Fifth International Conference on Quantitative
  Evaluation of Systems}, pages 215--224. IEEE, 2008.

\bibitem[Buckdahn et~al.(2009)Buckdahn, Li, and Peng]{buckdahn2009mean}
Rainer Buckdahn, Juan Li, and Shige Peng.
\newblock Mean-field backward stochastic differential equations and related
  partial differential equations.
\newblock \emph{Stochastic processes and their Applications}, 119\penalty0
  (10):\penalty0 3133--3154, 2009.

\bibitem[Buckdahn et~al.(2017)Buckdahn, Li, Peng, and Rainer]{buckdahn2017mean}
Rainer Buckdahn, Juan Li, Shige Peng, and Catherine Rainer.
\newblock Mean-field stochastic differential equations and associated pdes.
\newblock \emph{The Annals of Probability}, 45\penalty0 (2):\penalty0 824--878,
  2017.

\bibitem[Carmona and Lacker(2015)]{carmona2015probabilistic}
Ren{\'e} Carmona and Daniel Lacker.
\newblock A probabilistic weak formulation of mean field games and
  applications.
\newblock \emph{The Annals of Applied Probability}, 25\penalty0 (3):\penalty0
  1189--1231, 2015.

\bibitem[Carmona et~al.(2016)Carmona, Delarue, and Lacker]{carmona2016mean}
Ren{\'e} Carmona, Fran{\c{c}}ois Delarue, and Daniel Lacker.
\newblock Mean field games with common noise.
\newblock \emph{The Annals of Probability}, 44\penalty0 (6):\penalty0
  3740--3803, 2016.

\bibitem[Carmona et~al.(2017)Carmona, Delarue, and Lacker]{carmona2017mean}
Ren{\'e} Carmona, Fran{\c{c}}ois Delarue, and Daniel Lacker.
\newblock Mean field games of timing and models for bank runs.
\newblock \emph{Applied Mathematics \& Optimization}, 76\penalty0 (1):\penalty0
  217--260, 2017.

\bibitem[Daw and Pender(2019)]{daw2019new}
Andrew Daw and Jamol Pender.
\newblock New perspectives on the erlang-a queue.
\newblock \emph{Advances in Applied Probability}, 51\penalty0 (1):\penalty0
  268--299, 2019.

\bibitem[Dawson et~al.(2005)Dawson, Tang, and Zhao]{dawson2005balancing}
Donald~A Dawson, Jiashan Tang, and Yiqiang~Q Zhao.
\newblock Balancing queues by mean field interaction.
\newblock \emph{Queueing Systems}, 49\penalty0 (3):\penalty0 335--361, 2005.

\bibitem[Doldo and Pender(2021{\natexlab{a}})]{doldo2020multi}
Philip Doldo and Jamol Pender.
\newblock {Multi-Delay Differential Equations: A Taylor Expansion Approach}.
\newblock \emph{International Journal of Bifurcation and Chaos to appear},
  2021{\natexlab{a}}.

\bibitem[Doldo and Pender(2021{\natexlab{b}})]{doldo2021note}
Philip Doldo and Jamol Pender.
\newblock A note on the interpretation of distributed delay equations.
\newblock \emph{arXiv preprint arXiv:2106.11413}, 2021{\natexlab{b}}.

\bibitem[Doldo and Pender(2021{\natexlab{c}})]{doldo2021queues}
Philip Doldo and Jamol Pender.
\newblock Queues with updating information: Finding the amplitude of
  oscillations.
\newblock \emph{International Journal of Bifurcation and Chaos to appear},
  2021{\natexlab{c}}.

\bibitem[Doldo et~al.(2021)Doldo, Pender, and Rand]{doldo2021breaking}
Philip Doldo, Jamol Pender, and Richard Rand.
\newblock Breaking the symmetry in queues with delayed information.
\newblock \emph{International Journal of Bifurcation and Chaos}, 31\penalty0
  (09):\penalty0 2130027, 2021.

\bibitem[Dong et~al.(2019)Dong, Yom-Tov, and Yom-Tov]{dong2018impact}
Jing Dong, Elad Yom-Tov, and Galit~B Yom-Tov.
\newblock The impact of delay announcements on hospital network coordination
  and waiting times.
\newblock \emph{Management Science}, 65\penalty0 (5):\penalty0 1969--1994,
  2019.

\bibitem[Engblom and Pender(2014)]{engblom2014approximations}
Stefan Engblom and Jamol Pender.
\newblock Approximations for the moments of nonstationary and state dependent
  birth-death queues.
\newblock \emph{arXiv preprint arXiv:1406.6164}, 2014.

\bibitem[Heesen and Stannat(2021)]{heesen2021fluctuation}
Sophie Heesen and Wilhelm Stannat.
\newblock Fluctuation limits for mean-field interacting nonlinear hawkes
  processes.
\newblock \emph{Stochastic Processes and their Applications}, 2021.

\bibitem[Kamath et~al.(2015)Kamath, Jagannathan, and Raina]{kamath2015car}
Gopal~Krishna Kamath, Krishna Jagannathan, and Gaurav Raina.
\newblock Car-following models with delayed feedback: local stability and hopf
  bifurcation.
\newblock In \emph{2015 53rd Annual Allerton Conference on Communication,
  Control, and Computing (Allerton)}, pages 538--545. IEEE, 2015.

\bibitem[Ko and Gautam(2013)]{ko2013critically}
Young~Myoung Ko and Natarajan Gautam.
\newblock Critically loaded time-varying multiserver queues: computational
  challenges and approximations.
\newblock \emph{INFORMS Journal on Computing}, 25\penalty0 (2):\penalty0
  285--301, 2013.

\bibitem[Lacker and Zariphopoulou(2019)]{lacker2019mean}
Daniel Lacker and Thaleia Zariphopoulou.
\newblock Mean field and n-agent games for optimal investment under relative
  performance criteria.
\newblock \emph{Mathematical Finance}, 29\penalty0 (4):\penalty0 1003--1038,
  2019.

\bibitem[Lauri{\`e}re and Tangpi(2020)]{lauriere2020convergence}
Mathieu Lauri{\`e}re and Ludovic Tangpi.
\newblock Convergence of large population games to mean field games with
  interaction through the controls.
\newblock \emph{arXiv preprint arXiv:2004.08351}, 2020.

\bibitem[Lipshutz(2017)]{lipshutz2017exit}
David Lipshutz.
\newblock Exit time asymptotics for small noise stochastic delay differential
  equations.
\newblock \emph{arXiv preprint arXiv:1710.09771}, 2017.

\bibitem[Lipshutz and Williams(2015)]{lipshutz2015existence}
David Lipshutz and Ruth~J Williams.
\newblock Existence, uniqueness, and stability of slowly oscillating periodic
  solutions for delay differential equations with nonnegativity constraints.
\newblock \emph{SIAM Journal on Mathematical Analysis}, 47\penalty0
  (6):\penalty0 4467--4535, 2015.

\bibitem[Luo and Tangpi(2021)]{luo2021laplace}
Peng Luo and Ludovic Tangpi.
\newblock Laplace principle for large population games with control
  interaction.
\newblock \emph{arXiv preprint arXiv:2102.04489}, 2021.

\bibitem[Mandelbaum and Zeltyn(2004)]{mandelbaum2004}
Avi Mandelbaum and Sergey Zeltyn.
\newblock The impact of customers' patience on delay and abandonment: some
  empirically-driven experiments with the m/m/n+ g queue.
\newblock \emph{OR Spectrum}, 26\penalty0 (3):\penalty0 377--411, 2004.

\bibitem[Mandelbaum et~al.(1998)Mandelbaum, Massey, and
  Reiman]{mandelbaum1998strong}
Avi Mandelbaum, William~A Massey, and Martin~I Reiman.
\newblock Strong approximations for markovian service networks.
\newblock \emph{Queueing Systems}, 30\penalty0 (1):\penalty0 149--201, 1998.

\bibitem[Massey and Pender(2013)]{massey2013gaussian}
William~A Massey and Jamol Pender.
\newblock Gaussian skewness approximation for dynamic rate multi-server queues
  with abandonment.
\newblock \emph{Queueing Systems}, 75\penalty0 (2):\penalty0 243--277, 2013.

\bibitem[Massey and Pender(2018)]{massey2018dynamic}
William~A Massey and Jamol Pender.
\newblock Dynamic rate erlang-a queues.
\newblock \emph{Queueing Systems}, 89\penalty0 (1):\penalty0 127--164, 2018.

\bibitem[Mitzenmacher(2000)]{mitzenmacher2000useful}
Michael Mitzenmacher.
\newblock How useful is old information?
\newblock \emph{IEEE Transactions on Parallel and Distributed Systems},
  11\penalty0 (1):\penalty0 6--20, 2000.

\bibitem[Nirenberg et~al.(2018)Nirenberg, Daw, and Pender]{nirenberg2018impact}
Samantha Nirenberg, Andrew Daw, and Jamol Pender.
\newblock The impact of queue length rounding and delayed app information on
  disney world queues.
\newblock In \emph{Simulation Conference (WSC), 2018 Winter}. IEEE, 2018.

\bibitem[Novitzky and Pender(2020{\natexlab{a}})]{novitzky2020queues}
Sophia Novitzky and Jamol Pender.
\newblock Queues with delayed information: a probabilistic perspective.
\newblock \emph{Cornell University, Ithaca NY}, 14853, 2020{\natexlab{a}}.

\bibitem[Novitzky and Pender(2020{\natexlab{b}})]{novitzky2020update}
Sophia Novitzky and Jamol Pender.
\newblock To update or not to update: queues with information updates.
\newblock \emph{Cornell University, Ithaca NY}, 14853, 2020{\natexlab{b}}.

\bibitem[Novitzky et~al.(2019)Novitzky, Pender, Rand, and
  Wesson]{novitzky2019nonlinear}
Sophia Novitzky, Jamol Pender, Richard~H Rand, and Elizabeth Wesson.
\newblock Nonlinear dynamics in queueing theory: Determining the size of
  oscillations in queues with delay.
\newblock \emph{SIAM Journal on Applied Dynamical Systems}, 18\penalty0
  (1):\penalty0 279--311, 2019.

\bibitem[Novitzky et~al.(2020)Novitzky, Pender, Rand, and
  Wesson]{novitzky2020limiting}
Sophia Novitzky, Jamol Pender, Richard~H Rand, and Elizabeth Wesson.
\newblock Limiting the oscillations in queues with delayed information through
  a novel type of delay announcement.
\newblock \emph{Queueing Systems}, 95\penalty0 (3):\penalty0 281--330, 2020.

\bibitem[Nutz(2018)]{nutz2018mean}
Marcel Nutz.
\newblock A mean field game of optimal stopping.
\newblock \emph{SIAM Journal on Control and Optimization}, 56\penalty0
  (2):\penalty0 1206--1221, 2018.

\bibitem[Pender(2017)]{pender2017sampling}
Jamol Pender.
\newblock Sampling the functional kolmogorov forward equations for
  nonstationary queueing networks.
\newblock \emph{INFORMS Journal on Computing}, 29\penalty0 (1):\penalty0 1--17,
  2017.

\bibitem[Pender and Massey(2017)]{pender2017approximating}
Jamol Pender and William~A Massey.
\newblock Approximating and stabilizing dynamic rate jackson networks with
  abandonment.
\newblock \emph{Probability in the Engineering and Informational Sciences},
  31\penalty0 (1):\penalty0 1--42, 2017.

\bibitem[Pender and Phung-Duc(2016)]{pender2016law}
Jamol Pender and Tuan Phung-Duc.
\newblock A law of large numbers for m/m/c/delayoff-setup queues with
  nonstationary arrivals.
\newblock In \emph{International conference on analytical and stochastic
  modeling techniques and applications}, pages 253--268. Springer, 2016.

\bibitem[Pender et~al.(2017)Pender, Rand, and Wesson]{pender2017queues}
Jamol Pender, Richard~H Rand, and Elizabeth Wesson.
\newblock Queues with choice via delay differential equations.
\newblock \emph{International Journal of Bifurcation and Chaos}, 27\penalty0
  (04):\penalty0 1730016, 2017.

\bibitem[Pender et~al.(2018)Pender, Rand, and Wesson]{pender2018analysis}
Jamol Pender, Richard~H Rand, and Elizabeth Wesson.
\newblock An analysis of queues with delayed information and time-varying
  arrival rates.
\newblock \emph{Nonlinear Dynamics}, 91\penalty0 (4):\penalty0 2411--2427,
  2018.

\bibitem[Pender et~al.(2020)Pender, Rand, and Wesson]{pender2020stochastic}
Jamol Pender, Richard Rand, and Elizabeth Wesson.
\newblock A stochastic analysis of queues with customer choice and delayed
  information.
\newblock \emph{Mathematics of Operations Research}, 45\penalty0 (3):\penalty0
  1104--1126, 2020.

\bibitem[Possama{\"\i} and Tangpi(2021)]{possamai2021non}
Dylan Possama{\"\i} and Ludovic Tangpi.
\newblock Non-asymptotic convergence rates for mean-field games: weak
  formulation and mckean--vlasov bsdes.
\newblock \emph{arXiv preprint arXiv:2105.00484}, 2021.

\bibitem[Raina and Wischik(2005)]{raina2005buffer}
Gaurav Raina and Damon Wischik.
\newblock Buffer sizes for large multiplexers: Tcp queueing theory and
  instability analysis.
\newblock In \emph{Next Generation Internet Networks, 2005}, pages 173--180.
  IEEE, 2005.

\bibitem[Raina et~al.(2015)Raina, Manjunath, Prasad, and
  Giridhar]{raina2015stability}
Gaurav Raina, Sreelakshmi Manjunath, Sai Prasad, and Krishnamurthy Giridhar.
\newblock Stability and performance analysis of compound tcp with rem and
  drop-tail queue management.
\newblock \emph{IEEE/ACM Transactions on Networking}, 24\penalty0 (4):\penalty0
  1961--1974, 2015.

\bibitem[Shah et~al.(2019)Shah, Wikum, and Pender]{shah2019using}
Aditya Shah, Anders Wikum, and Jamol Pender.
\newblock Using simulation to study the last to enter service delay
  announcement in multiserver queues with abandonment.
\newblock In \emph{2019 Winter Simulation Conference (WSC)}, pages 2595--2605.
  IEEE, 2019.

\bibitem[Whitt(2021)]{whitt2021many}
Ward Whitt.
\newblock On the many-server fluid limit for a service system with routing
  based on delayed information.
\newblock \emph{Operations Research Letters}, 49\penalty0 (3):\penalty0
  316--319, 2021.

\bibitem[Zeltyn and Mandelbaum(2005)]{zeltyn2005}
Sergey Zeltyn and Avishai Mandelbaum.
\newblock Call centers with impatient customers: many-server asymptotics of the
  m/m/n+ g queue.
\newblock \emph{Queueing Systems}, 51\penalty0 (3-4):\penalty0 361--402, 2005.

\end{thebibliography}

\end{document}